\documentclass{amsart}

\textwidth=16.0cm  \textheight=24.5cm
\topmargin=-1.0cm  \oddsidemargin=0cm  \evensidemargin=0cm

\usepackage{amsmath,amssymb}
\usepackage{color}
\usepackage{mathtools}
\usepackage{comment}

\begin{document}

\makeatletter
\@addtoreset{equation}{section}
\def\theequation{\thesection.\arabic{equation}}
\makeatother

\theoremstyle{definition}
\newtheorem{dfn}{Definition}[section]
\newtheorem{thm}[dfn]{Theorem}
\newtheorem*{th*}{Theorem}
\newtheorem{lem}[dfn]{Lemma}
\newtheorem{prop}[dfn]{Proposition}
\newtheorem{rem}[dfn]{Remark}
\newtheorem{cor}[dfn]{Corollary}
\newtheorem*{cor*}{Corollary}
\newtheorem*{prop*}{Proposition}
\newtheorem{quest}[dfn]{Question}
\newtheorem{conj}[dfn]{Conjecture}

\newcommand{\bbA}{\mathbb{A}}
\newcommand{\bbC}{\mathbb{C}}
\newcommand{\bbR}{\mathbb{R}}
\newcommand{\bbQ}{\mathbb{Q}}
\newcommand{\bbZ}{\mathbb{Z}}
\newcommand{\bbH}{\mathbb{H}}

\newcommand{\calA}{\mathcal{A}}
\newcommand{\calC}{\mathcal{C}}
\newcommand{\calD}{\mathcal{D}}
\newcommand{\calE}{\mathcal{E}}
\newcommand{\calI}{\mathcal{I}}
\newcommand{\calN}{\mathcal{N}}
\newcommand{\calO}{\mathcal{O}}
\newcommand{\calP}{\mathcal{P}}
\newcommand{\calS}{\mathcal{S}}
\newcommand{\calU}{\mathcal{U}}
\newcommand{\calV}{\mathcal{V}}
\newcommand{\calZ}{\mathcal{Z}}

\newcommand{\fraka}{\mathfrak{a}}
\newcommand{\frakH}{\mathfrak{H}}
\newcommand{\frakh}{\mathfrak{h}}
\newcommand{\fraki}{\mathfrak{i}}
\newcommand{\frakg}{\mathfrak{g}}
\newcommand{\frakk}{\mathfrak{k}}
\newcommand{\frakp}{\mathfrak{p}}
\newcommand{\frakX}{\mathfrak{X}}

\newcommand{\bfs}{\mathbf{s}}
\newcommand{\bfe}{\mathbf{e}}
\newcommand{\bfa}{\mathbf{a}}

\newcommand{\rank}{\mathrm{rank}}
\newcommand{\corank}{\mathrm{Corank}}
\newcommand{\im}{\mathrm{Im}}
\newcommand{\Hom}{\mathrm{Hom}}

\newcommand{\GL}{\mathrm{GL}}
\newcommand{\Sym}{\mathrm{Sym}}
\newcommand{\Sp}{\mathrm{Sp}}
\newcommand{\Mat}{\mathrm{Mat}}
\newcommand{\Mp}{\mathrm{Mp}}
\newcommand{\SL}{\mathrm{SL}}
\newcommand{\SO}{\mathrm{SO}}

\newcommand{\automforms}{\mathcal{A}(\Gamma)}
\newcommand{\NHMFonG}{\mathcal{N}(\Gamma)}
\newcommand{\NHMFonGwtlam}{\mathcal{N}_\lambda(\Gamma)}
\newcommand{\gk}{(\frakg, K_\infty)}
\newcommand{\NHMFonGcharchi}{\mathcal{N}(\Gamma,\chi)}
\newcommand{\NHAFonPG}{\mathcal{N}(P \backslash G)}
\newcommand{\NHAFonG}{\mathcal{N}(G)}
\newcommand{\NHAFonBG}{\mathcal{N}(B \backslash G)}

\newcommand{\ul}{\underline}
\newcommand{\pr}{\mathrm{pr}}
\newcommand{\Ext}{\mathrm{Ext}}
\newcommand{\fin}{{\rm \mathchar`- fin}}
\newcommand{\fini}{\mathrm{fin}}
\newcommand{\Ind}{\mathrm{Ind}}
\newcommand{\Res}{\mathrm{Res}}
\newcommand{\bs}{\backslash}
\newcommand{\rclub}{{\color{red}\clubsuit}}

\title{Nearly holomorphic automorphic forms on $\mathrm{SL}_2$}
\author{Shuji Horinaga}

\begin{abstract}
	We define the space of nearly holomorphic automorphic forms on a connected reductive group $G$ over $\bbQ$ such that the homogeneous space $G(\bbR)^1/ K_\infty^\circ$ is a Hermitian symmetric space.
	By Pitale, Saha and Schmidt's study, there are the classification of indecomposable $(\frakg,K_\infty)$-modules which occur in the space of nearly holomorphic elliptic modular forms and Siegel modular forms of degree $2$.
	This paper studies global representations of the adele group $G(\bbA_\bbQ)$ which occur in the space of  nearly holomorphic Hilbert modular forms.
	In the case of elliptic modular forms, the result of this paper is an adelization of Pitale, Saha and Schmidt's result.
\end{abstract}

\maketitle

\section{Introduction}
	
	The notion of a nearly holomorphic modular form was introduced by Shimura to study the analytic behavior of Eisenstein series at certain points (cf.~\cite{82_shimura, 86_shimura}).
	For example, the Eisenstein series $E_2$ of weight two is a nearly holomorphic elliptic modular form.
	Similarly, a nearly holomorphic Siegel Eisenstein series of weight $(n+3)/2$ of degree $n$ is constructed by Shimura in \cite[$\S$ 17]{00_shimura}.
	Recently, Pitale, Saha and Schmidt studied the representation theoretic aspects of nearly holomorphic elliptic modular forms in \cite{pss1} and Siegel modular forms of degree $2$ in \cite{pss2} by using the theory of BGG category $\calO$.
	They classified the $(\mathfrak{sl}_2(\bbC),\SO(2,\bbR))$-modules and $(\mathfrak{sp}_4(\bbC),\mathrm{U}(2))$-modules that occur in the space of nearly holomorphic modular forms.
	Moreover they determined the multiplicity of each irreducible $(\mathfrak{sl}_2(\bbC),\SO(2,\bbR))$-module in terms of modular forms.
	In this paper we consider global representations of the adele group generated by nearly holomorphic Hilbert modular forms.
	We determine the structure of the space of non-cuspidal nearly holomorphic automorphic forms on the adele group $\SL_2(\bbA_F)$ as a global representation for a totally real number field $F$.

	Let $G$ be a connected reductive group over $\bbQ$ and $G_\infty$ the group of $\bbR$-valued points of $G$.
	In this paper we assume the homogeneous space $\frakH=G_\infty^\circ / A_G^\infty K_\infty^\circ$ is a Hermitian symmetric space. 
	Here $G_\infty^\circ$ is the identity component of $G(\bbR)$, $K_\infty^\circ$ is a maximal compact subgroup of $G_\infty^\circ$ and $A_G^\infty$ is the identity component of the split component of $G_\infty^\circ$.
	Let $\frakg$, $\frakk$ and $\fraka$ be the complexification of the Lie algebra of $G_\infty^\circ$, $K_\infty^\circ$ and  $A_G^\infty$, respectively.
	Then we have the well-known decomposition:
	\[
	\frakg = \fraka \oplus ( \frakk + \frakp_+ + \frakp_-),
	\]
	where $\frakp_+$ (resp.~$\frakp_-$) is the Lie subalgebra of $\frakg$ corresponding to the holomorphic tangent space (resp.~anti-holomorphic tangent space) of $\frakH$ at the base point $K_\infty^\circ \in \frakH = G_\infty^\circ / A_G^\infty K_\infty^\circ$.
	Here $\oplus$ and $+$ are the direct sum as Lie algebras and the direct sum as $\bbC$-vector spaces, respectively.
	For details, see \cite[\S 2.3, (3.9)]{satake}.
	We denote by $\calA(A \bs G)$ the space of automorphic forms on the adele group $A_G^\infty \bs G(\bbA_\bbQ)$.
	We say that an automorphic form $\varphi$ is nearly holomorphic (resp.~holomorphic) if $\varphi$ is $\frakp_-$-finite (resp.~if $\varphi$ is annihilated by $\frakp_-$), i.e., there exists a positive integer $m \in \bbZ_{\geq 1}$ such that $\frakp_-^m \cdot \varphi=0$ (resp. $\frakp_- \cdot \varphi = 0$).
	Let $\calN(A \bs G)$ be the space of nearly holomorphic automorphic forms, $\calS\calN(A \bs G)$ the space of cusp forms in $\calN(A \bs G)$ and $\calE\calN(A \bs G)$ the orthogonal complement of $\calS\calN(A \bs G)$ in $\calN(A \bs G)$ with respect to the Petersson inner product.
	We would like to understand the space $\calE\calN(A \bs G)$ as a $G(\bbA_\fini) \times (\frakg,K_\infty^\circ)$-module.

	Let $G= \Res_{F/\bbQ} \SL_2$.
	Here $\Res$ is the Weil restriction and $F$ is a totally real number field of degree $d$.
	For a place $v$, we denote by $F_v$ the $v$-completion of $F$.
	Let $\calO_{F_v}$ (resp.~$\calO_F$) be the ring of integers of $F_v$ (resp.~$F$). 
	We denote by $\bfa$ the set of archimedean places of $F$.
	First we consider $(\frakg, K_\infty)$-modules generated by Hilbert modular forms (cf.~\cite{pss1}).
	For a congruence subgroup $\Gamma$ of $\SL_2(F)$, we denote by $\calA(\Gamma)$ the space of automorphic forms on $\Gamma \bs G_\infty$.
	The Lie group $G_\infty = \prod_{v \in \bfa} \SL_2(F_v)$ acts on the Hermitian symmetric space $\frakH=\prod_{v \in \bfa}\frakH_v$.
	In this case, $\frakH_v$ is the complex upper half plane and the maximal compact group $K_v$ of $\SL_2(F_v)$ is the stabilizer of $\sqrt{-1} \in \frakH_v$ under the linear fractional transformation.
	The Lie algebra $\frakk$ is a Cartan subalgebra of $\frakg$ and $\frakp = \frakk + \frakp_-$ is a Borel subalgebra of $\frakg$.
	We choose the positive root system corresponding to the Borel subalgebra $\frakp$.
	For $\lambda \in \bbC^d$, let $N(\lambda)$ be the Verma module with highest weight $\lambda$ and $L(\lambda)$ its irreducible quotient (cf.~\cite[\S 1.3]{cat_o}, \S \ref{BGG_cat}).
	We say that an automorphic form $\varphi \in \calA(\Gamma)$ is nearly holomorphic if $\varphi$ is $\frakp_-$-finite.
	This definition is equivalent to Shimura's definition (cf.~\cite[$\S$ 7]{90_shimura}, \cite[Appendix 8]{00_shimura}).
	Let $\calN(\Gamma)$ be the space of nearly holomorphic automorphic forms in $\calA(\Gamma)$.
	For an infinitesimal character $\chi$, let $\calN(\Gamma,\chi)$ be the generalized $\chi$-eigenspace of $\calN(\Gamma)$.
	We denote by $\chi_\lambda$ the infinitesimal character of $N(\lambda)$ (cf.~\cite{cat_o}).
	For $\lambda=(\lambda_1,\ldots,\lambda_d)$ and $\mu=(\mu_1,\ldots,\mu_d)$, the infinitesimal characters $\chi_\lambda$ and $\chi_\mu$ are the same if and only if there exists $w \in W$ such that $w \cdot \lambda = \mu$.
	Here $W \cong (\bbZ / 2\bbZ)^d$ is the Weyl group of $G_\infty$ and $w \cdot \lambda = w(\lambda + (-1,\ldots,-1)) + (1,\ldots,1)$.
	In this case, for $w=(\varepsilon_1,\ldots,\varepsilon_d) \in W \cong (\bbZ/2\bbZ)^d$ with $\varepsilon_i \in \{\pm1\}$ for any $i$, we have $w(x_1,\ldots,x_d) = (\varepsilon_1 x_1, \ldots, \varepsilon_d x_d)$.
	Then, integral infinitesimal characters are parametrized by the set $\bbZ^d_{\geq 1}$.
	For a parameter $\lambda$, we say that $\lambda$ is singular if there exists $1 \neq w \in W$ such that $w \cdot \lambda = \lambda$.
	We also say that $\lambda$ is regular if $\lambda$ is not singular.
	The following proposition is the Hilbert modular form version of the theorem of \cite{pss1}.
	
	\begin{prop}
	Suppose $F \neq \bbQ$ and $\lambda \in \bbZ^d_{\geq 1}$.
	Then the space $\calN(\Gamma,\chi_\lambda)$ decomposes as the direct sum
	\[
	\calN(\Gamma,\chi_\lambda) = 
	\begin{dcases}
	L(\lambda)^{\bigoplus m(\Gamma,\lambda)}  & \text{if $\lambda \neq (2,\ldots,2)$},\\
	L(\lambda)^{\bigoplus m(\Gamma,\lambda)} \oplus \bbC &\text{if $\lambda=(2,\ldots,2)$}.
	\end{dcases}
	\]
	Here the multiplicity $m(\Gamma,\lambda)$ is the dimension of the space of holomorphic Hilbert modular forms of weight $\lambda$ with respect to $\Gamma$ and $\bbC$ is the trivial representation.
	\end{prop}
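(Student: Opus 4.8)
The plan is to realize $M := \calN(\Gamma,\chi_\lambda)$ as an object of the BGG category $\calO$ attached to the Borel subalgebra $\frakp = \frakk + \frakp_-$, to compute its socle, and then to show it is semisimple by excluding non-split extensions. The hypothesis $F \neq \bbQ$ enters precisely through the density of $\Gamma$ at the archimedean places — equivalently, the infiniteness of the unit group — and this is what makes the Hilbert case cleaner than the elliptic one.

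As in \cite{pss1}, one first checks that $M$ is an object of $\calO$: it is $\frakk$-semisimple by $K_\infty$-finiteness, locally $\frakp_-$-finite by near holomorphy, and has finite-dimensional weight spaces with weights bounded above (a fixed infinitesimal character bounds the weights of any object of $\calO$), hence has finite length. Since $\frakg = \bigoplus_{v \in \bfa}\mathfrak{sl}_2(\bbC)$, the block of $\calO$ with infinitesimal character $\chi_\lambda$ is the external tensor product of the corresponding $\mathfrak{sl}_2(\bbC)$-blocks, and for $\lambda \in \bbZ^d_{\geq 1}$ its simple objects are the $L(w\cdot\lambda)$, $w \in W$: the only one in $\bbZ^d_{\geq 1}$ is $\lambda$ itself, and the trivial representation $\bbC = L(0,\dots,0)$ occurs in this block exactly when $\lambda = (2,\dots,2)$. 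Recall also that $N(\mu) = L(\mu)$ for every $\mu \in \bbZ^d_{\geq 1}$, so a holomorphic Hilbert modular form of weight $\mu$ generates a copy of $L(\mu)$; and that, since $F \neq \bbQ$, a holomorphic Hilbert modular form of weight $\nu$ with $\nu \neq (0,\dots,0)$ and $\nu_v \leq 0$ for some $v$ must vanish (by Koecher's principle together with the action of the totally positive units, whose group is infinite exactly because $F \neq \bbQ$).

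We compute $\mathrm{soc}(M)$. A simple submodule is some $L(\mu)$ whose highest weight vector is a nonzero $\frakp_-$-annihilated element of $M$, i.e.\ a nonzero holomorphic Hilbert modular form of weight $\mu$; by the vanishing just recalled, either $\mu \in \bbZ^d_{\geq 1}$, whence $\mu = \lambda$ by the infinitesimal character, or $\mu = (0,\dots,0)$, whence the form is a constant, $L(\mu) = \bbC$, and $\lambda = (2,\dots,2)$. Moreover $\Hom(L(\lambda),M)$ is the space of holomorphic Hilbert modular forms of weight $\lambda$, of dimension $m(\Gamma,\lambda)$, while $\Hom(\bbC,M)$ is the one-dimensional space of constants if $\lambda = (2,\dots,2)$ and is zero otherwise. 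Hence $\mathrm{soc}(M) = L(\lambda)^{\oplus m(\Gamma,\lambda)} \oplus \bbC^{\oplus b}$, with $b = 1$ if $\lambda = (2,\dots,2)$ and $b = 0$ otherwise.

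It remains to prove $M = \mathrm{soc}(M)$. If not, $M$ contains a non-split submodule $N$ with $0 \to S \to N \to L(\nu) \to 0$, $S \in \{L(\lambda),\bbC\}$ simple. Using that in each regular $\mathfrak{sl}_2(\bbC)$-block the two simples have one-dimensional $\Ext^1$ in either direction while $\Ext^1(L,L) = 0$ for every simple $L$, a Künneth computation for the external tensor product shows that $\nu$ agrees with the highest weight of $S$ except in one place $v$, with $\lambda_v \geq 2$, and that $N$ is, up to isomorphism, obtained from $S$ by replacing its $v$-th tensor factor $S_v$ by the Verma module $N_v(2-\lambda_v)$ if $S = L(\lambda)$ and by the corresponding dual Verma module if $S = \bbC$. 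If $S = L(\lambda)$, then $N$ contains a nonzero $\frakp_-$-annihilated vector of weight $\nu$ — the Verma generator tensored with the highest weight vectors of the remaining factors — hence a nonzero holomorphic Hilbert modular form of weight $\nu$; but $\nu_v = 2-\lambda_v \leq 0$ and $\nu_{v'} = \lambda_{v'} \geq 1$ for $v' \neq v$, so $\nu \neq (0,\dots,0)$ (here $d \geq 2$ is used) and the form vanishes by the preceding paragraph, a contradiction. If $S = \bbC$ (so $\lambda = (2,\dots,2)$), then $N$ contains a nonzero vector $\psi$ of $K_\infty$-weight $(0,\dots,0,2,0,\dots,0)$, namely the weight-two part of the $v$-th dual Verma; since every factor $v' \neq v$ acts trivially on $\psi$, it is constant in those variables, hence descends to a function on $\SL_2(F_v)$ that is left-invariant under the dense image of $\Gamma$ while transforming under $\SO(2)$ on the right through the weight-two character — impossible for a nonzero function. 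Either way we get a contradiction, so $M$ is semisimple, which is the assertion. The main obstacle is this last step, together with the vanishing of the holomorphic forms of mixed weight used in computing the socle: both rest on $F \neq \bbQ$, and for $F = \bbQ$ the extension excluded in the case $S = \bbC$ is genuinely realized — by the non-holomorphic weight-two Eisenstein series $E_2^{*}$.
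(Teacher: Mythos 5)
Your route is genuinely different from the paper's. The paper deduces the proposition in a few lines from Shimura's structure theorem (Theorem 3.5 in the text, \cite[Theorem 5.2]{87_shimura}): for $F\neq\bbQ$ every nearly holomorphic Hilbert modular form is a finite sum of Maass--Shimura derivatives of holomorphic forms, so $\calN(\Gamma)$ is a sum of highest weight modules generated by holomorphic forms, each of which is $N(\mathbf{k})=L(\mathbf{k})$ for $\mathbf{k}\in\bbZ^d_{\geq1}$ or the trivial representation (constants), and semisimplicity is automatic. You avoid that input and instead run a socle/$\Ext^1$ analysis in the block of $\calO$, in the spirit of \cite{pss1,pss2}. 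Your socle computation and your K\"unneth identification of the candidate non-split length-two extensions are correct.

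The gap is in the opening move of your semisimplicity step: ``if not, $M$ contains a non-split submodule $N$ with $0\to S\to N\to L(\nu)\to 0$, $S$ simple.'' Non-semisimplicity only gives a non-split extension $0\to\mathrm{soc}(M)\to N'\to L(\nu)\to 0$ inside $M$; extracting from $N'$ a length-two non-split \emph{submodule} with \emph{simple} socle is an extra step. When every socle constituent that $L(\nu)$ extends non-trivially is of a single isomorphism type, the one-dimensionality of the relevant $\Ext^1$ lets you change basis in $L(\lambda)^{\oplus m}$ (resp.\ $\bbC^{\oplus b}$) and split off a complement, so your two contradictions apply. But for $d=2$ and $\lambda=(2,2)$ the simple $L(2,0)$ has non-trivial $\Ext^1$ with \emph{both} $L(2,2)$ and $\bbC$, and a minimal submodule $Q$ of $M$ surjecting onto $L(2,0)$ could have socle $L(2,2)\oplus\bbC$ with both extension components non-zero; such a $Q$ has no length-two non-split submodule at all, since any submodule containing the (one-dimensional) weight-$(2,0)$ space $\bbC x$ contains $Y_1x\in\bbC$ and $X_2x\in L(2,2)$. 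In that configuration $x$ is neither $\frakp_-$-annihilated (so the Koecher/Freitag contradiction does not apply) nor annihilated by the second $\mathfrak{sl}_2$-factor (so the density contradiction does not apply). Excluding it amounts to showing that no nearly holomorphic form of weight $(2,0)$ has $Y_1x$ a non-zero constant, and the only proof I see is exactly the input you were avoiding: by $f=\sum_iD_if_i$ one gets $N_{(2,0)}(\Gamma)=0$ outright, since $R_{0,1}$ kills constants and mixed-weight holomorphic forms vanish. You should either invoke Theorem 3.5 at this one point or supply a separate argument for $d=2$, $\lambda=\ul{2}$, $m(\Gamma,\ul{2})\geq1$. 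A smaller point: the membership of $M$ in $\calO$ (finite-dimensional weight spaces, hence finite length) is asserted in passing but itself rests on the finite-dimensionality of spaces of nearly holomorphic forms of bounded weight and degree, i.e.\ on the same analytic theory.
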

	
	Next we ``adelize'' the above statement.
	Set $K= \prod_{v < \infty} K_v \times K_\infty$ where $K_v = \SL_2(\calO_{F_v})$ for a non-archimedean place.
	Let $B$ be the upper triangular subgroup of $\SL_2$ with Levi decomposition $B=MN$.
	Here $M$ is the diagonal subgroup of $\SL_2$.
	For a complex number $s$, a place $v$ of $F$ and a character $\mu_v$ of $F_v^\times$, let $I_v(\mu_v,s)$ be the space of $K_v$-finite functions $f$ on $\SL_2(F_v)$ such that $f(mng) = \mu_v(a) |a|^{s+1} f(g)$ where $m = \mathrm{diag}(a,a^{-1}) \in M(F_v)$, $n \in N(F_v)$ and $g \in G(F_v)$.
	Let $F_{\infty,+}^\times$ be the identity component of the group of units in the ring $F_{\infty} = \prod_{v \in \bfa} F_v$.
	For a Hecke character $\mu$ of $F^\times F_{\infty,+}^\times \bs \bbA_F^\times$, let $I(\mu,s) = \bigotimes_v I_v(\mu_v,s)$.
	Put $P_0=\Res_{F/\bbQ}B$, $M_0 = \Res_{F/\bbQ}M$ and $N_0 = \Res_{F/\bbQ}N$.
	Then $P_0$ is a Borel subgroup of $G$ with Levi decomposition $P_0=M_0N_0$.
	For an automorphic form $\varphi$ on $G(\bbA_\bbQ)$, let $\varphi_0$ be the constant term of $\varphi$ along $P_0$, i.e.,
	\[
	\varphi_0(g) = \int_{N_0(\bbQ) \bs N_0(\bbA_\bbQ)} \varphi(ng)\, dn , \qquad g \in G(\bbA_\bbQ).
	\]
	We denote by $\calS\calN(G)$ the space of automorphic forms $\varphi$ in $\calN(G)$ such that $\varphi_0=0$.
	Let $\calE\calN(G)$ be the orthogonal complement of $\calS\calN(G)$ in $\calN(G)$ with respect to the Petersson inner product and $\calE\calN(G,\chi)$ the generalized eigenspace for an infinitesimal character $\chi$.
	A constant term of nearly holomorphic automorphic form is described as follows:
	
	\begin{prop}
	Let $\varphi$ be a nearly holomorphic automorphic form on $G(\bbA_\bbQ)$.
	Then there exist unitary Hecke characters $\mu_1,\ldots,\mu_\ell$ of $F^\times F_{\infty,+}^\times \bs \bbA^\times_F$, integers $s_1, \ldots, s_\ell$ and functions $ \phi_i \in I(\mu_i,s_i)$ such that the constant term $\varphi_0$ is equal to the sum $\sum_{i=1}^\ell \phi_i$.
	\end{prop}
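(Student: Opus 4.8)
The strategy is to transfer the problem to the split torus $M_0=\Res_{F/\bbQ}M$, invoke the structure theory of automorphic forms on a torus, and then cut the answer down to honest induced sections using $\frakp_-$-finiteness.

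First I would record that the constant-term map $\varphi\mapsto\varphi_0$ commutes with right translation by $G(\bbA_\fini)$ and with the right action of $\calU(\frakg)$: the integral is over $N_0(\bbQ)\bs N_0(\bbA_\bbQ)$ against a right-invariant measure, $N_0(\bbQ)$ acts on the left, while $\frakg$ and $G(\bbA_\fini)$ act on the right. Hence $\varphi_0$ is again nearly holomorphic, and moreover it is left $N_0(\bbA_\bbQ)$-invariant, left $M_0(\bbQ)$-invariant, $K$-finite, finite under the centre $\calZ$ of $\calU(\frakg)$, and of moderate growth. Since the projectors onto generalized $\calZ$-eigenspaces are themselves given by the action of elements of $\calZ$, they commute with $\frakp_-$ and with $G(\bbA_\fini)$; decomposing $\varphi$ accordingly, we may assume $\varphi$, hence $\varphi_0$, has a single generalized infinitesimal character $\chi_\lambda$ with $\lambda\in\bbC^d$.

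Next I would restrict $\varphi_0$ to $M_0(\bbA_\bbQ)$ through the Iwasawa decomposition $G(\bbA_\bbQ)=N_0(\bbA_\bbQ)M_0(\bbA_\bbQ)K$: for each $\kappa\in K$ the function $m\mapsto\varphi_0(m\kappa)$ on $M_0(\bbQ)\bs M_0(\bbA_\bbQ)\cong F^\times\bs\bbA_F^\times$ is an automorphic form on the torus $M_0$. Here one uses that the split component $A_G^\infty$ is trivial for $\SL_2$, so that no central quotient intervenes, and that the Harish--Chandra homomorphism attached to $P_0$ carries $\calZ$ into the $W$-invariants of the enveloping algebra of $\mathrm{Lie}\,M_0(\bbC)$, over which that enveloping algebra is a finite module; the $\calZ$-finiteness of $\varphi_0$ therefore forces the restriction to be finite over $\calU(\mathrm{Lie}\,M_0(\bbC))$. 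By the elementary classification of automorphic forms on a torus, $\varphi_0|_{M_0}$ is thus a finite sum of terms of the shape $\chi_i(m)\,p_i\bigl(\log|m|_{\bbA_F}\bigr)\,f_i(\kappa)$, where the $\chi_i$ are Hecke characters of $F^\times\bs\bbA_F^\times$ trivial on a fixed open compact subgroup of $M_0(\bbA_\fini)$ (coming from the $K$-finiteness of $\varphi_0$), with archimedean exponents in a finite set determined by $\chi_\lambda$; the $p_i$ are polynomials; and the $f_i$ are functions on $K$.

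Finally I would bring in $\frakp_-$-finiteness. On the constant term --- which at each archimedean place $v$ is a function of the single variable $y_v$ of a fixed weight under the compact torus of $\SL_2(F_v)$ --- a generator of $\frakp_{-,v}$ acts, by the standard $\SL_2(\bbR)$ computation; concretely, in the classical realization it is (up to a scalar) the weight-lowering Maass operator $f\mapsto y_v^2\,\partial f/\partial\bar z_v$, which on a function independent of $\mathrm{Re}(z_v)$ carries a monomial $y_v^{\mu}(\log y_v)^{k}$ to a nonzero multiple of $y_v^{\mu+1}\bigl(\mu(\log y_v)^{k}+k(\log y_v)^{k-1}\bigr)$. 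Iterating, the $\frakp_{-,v}$-span of such a monomial is finite dimensional exactly when $k=0$ and $\mu\in\bbZ_{\le0}$. Applying this at every archimedean place to each of the finitely many terms of $\varphi_0$, the single relation $\frakp_-^{m}\cdot\varphi_0=0$ forces every $p_i$ to be constant and every surviving $\chi_i$ to have nonpositive integral archimedean exponents (in particular $\lambda$ is integral on the components that survive, otherwise $\varphi_0=0$). Combined with the integrality of the $K_\infty$-types of $\varphi_0$ and the $F^\times$-invariance of $\chi_i$ --- whose archimedean exponents, now being real, are orthogonal to the lattice of units and hence all equal --- this shows $\chi_i|_{F_{\infty,+}^\times}$ is a single integral power $|\cdot|_{\bbA_F}^{\,s_i+1}$ of the id\`ele norm. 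Setting $\mu_i:=\chi_i\cdot|\cdot|_{\bbA_F}^{-(s_i+1)}$, which is trivial on $F^\times F_{\infty,+}^\times$ and hence unitary (that quotient being compact), we obtain $\varphi_0=\sum_i\phi_i$ with $\phi_i\in I(\mu_i,s_i)$ and $s_i\in\bbZ$. The main obstacle is precisely this last step: setting up the first order operator describing the action of $\frakp_-$ on the constant term at each archimedean place --- with proper bookkeeping of the $K_\infty$-weight --- and deducing from $\frakp_-^{m}\cdot\varphi_0=0$ that logarithmic terms and non-integral (a fortiori non-parallel) archimedean exponents are excluded simultaneously at all places.
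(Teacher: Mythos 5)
Your proposal is correct, and in its key archimedean step it takes a recognizably different route from the paper's. The paper first descends to the classical picture: after reducing to a single $K_\infty$-weight, it realizes $\varphi$ as a nearly holomorphic Hilbert modular form via the correspondence (\ref{corr_MF_AF}), reads off from the Fourier expansion that the zeroth coefficient $a(z,0,\gamma)$ is a polynomial in $y_1^{-1},\ldots,y_d^{-1}$ (this is exactly where Shimura's polynomial-in-$1/y$ characterization of $\frakp_-$-finiteness is used), concludes from the resulting explicit formula $(\varphi_0)_k(m_\infty)=j(m_\infty,\mathbf{i})^{-\mathbf{k}}a(m_\infty(\mathbf{i}),0,\gamma_{k,\infty})$ that $M_0(\bbR)$ acts semisimply on $\varphi_0$, and only then invokes \cite[Lemma I.3.2]{MW} to place $\varphi_0$ inside a finite sum of induced spaces. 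You reverse the order: you apply the structure theory for automorphic forms on the torus $M_0$ first, obtaining characters times polynomials in logarithms, and then run the explicit $y^2\partial_y$ computation for the lowering operator on the constant term to show that $\frakp_-^m\cdot\varphi_0=0$ (which holds because the constant term commutes with the right $\frakg$-action) kills the logarithmic terms and forces nonpositive integral exponents relative to the weight, with parallelism of the exponents then extracted from invariance under a finite-index subgroup of the units. What your route buys is independence from the equivalence between the adelic definition of near holomorphy and Shimura's classical one---the direct Lie-algebra computation replaces it---at the cost of doing the weight bookkeeping for the Maass operators by hand; the paper's route gets the semisimplicity of the $M_0(\bbR)$-action essentially for free from the classical theory of nearly holomorphic forms. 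Both arguments rest on the same two external inputs (\cite[Lemma I.3.2]{MW} and the equivariance of the constant-term map), and your final normalization producing unitary $\mu_i$ and integral $s_i$ matches the statement.
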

	
	Hence we can regard the space $\calE\calN(A \bs G)$ as a subrepresentation of the algebraic direct sum of induced representations.
	We consider a necessary condition for $\lambda$ to satisfy $\calE\calN(G,\chi_{\lambda}) \neq 0$.
	We say that a parameter $\lambda = (\lambda_1,\dots,\lambda_d)$ is parallel if $\lambda_1 = \cdots =\lambda_d$ and $\lambda$ is integral if $\lambda \in \bbZ^d$.
	
	\begin{lem}
	For a parameter $\lambda\in\bbC^d$, the space $\calE\calN(G,{\chi_\lambda})$ is zero unless there exists $w \in W$ such that $w \cdot \lambda$ is parallel and integral.
	\end{lem}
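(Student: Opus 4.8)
\emph{Proof plan.} The plan is to transport the vanishing assertion to the constant term, where the preceding proposition is available. First I would record that the constant-term map $\varphi\mapsto\varphi_0$ is injective on $\calE\calN(G)$: if $\varphi\in\calE\calN(G)$ and $\varphi_0=0$ then $\varphi\in\calS\calN(G)$ by the definition of $\calS\calN(G)$, and since $\calE\calN(G)$ is the orthogonal complement of $\calS\calN(G)$ in $\calN(G)$ the form $\varphi$ is orthogonal to itself, so $\varphi=0$. This map is equivariant for the $(\frakg,K_\infty)$-action, in particular for the action of the centre $\mathcal{Z}(\frakg)$, so it restricts to an injection of $\calE\calN(G,\chi_\lambda)$ into the generalized $\chi_\lambda$-eigenspace of the space of all constant terms $\varphi_0$ of nearly holomorphic automorphic forms. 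It therefore suffices to prove that this last space vanishes unless $w\cdot\lambda$ is parallel and integral for some $w\in W$.

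Next I would invoke the preceding proposition: any such constant term lies in a finite sum $\sum_{i=1}^{\ell}\phi_i$ with $\phi_i\in I(\mu_i,s_i)$, where each $\mu_i$ is a unitary Hecke character of $F^\times F_{\infty,+}^\times\bs\bbA_F^\times$ and each $s_i\in\bbZ$. The heart of the argument is then to compute the infinitesimal character of $I(\mu,s)=\bigotimes_v I_v(\mu_v,s)$, regarded as a $(\frakg,K_\infty)$-module. Since $\frakg=\bigoplus_{v\in\bfa}\mathfrak{sl}_2(\bbC)$, this infinitesimal character is the product over the archimedean places of the infinitesimal character of the $\SL_2(F_v)$-principal series $I_v(\mu_v,s)$. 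The hypothesis that $\mu$ is trivial on $F_{\infty,+}^\times=\prod_{v\in\bfa}\bbR_{>0}$ forces each archimedean $\mu_v$ to be a sign character, so $d\mu_v=0$ on $\mathfrak{sl}_2(\bbC)$; hence the infinitesimal character of $I_v(\mu_v,s)$ depends on $s$ alone and is the same for every $v\in\bfa$. A direct computation, incorporating the $\rho$-shift of the normalization fixed in the excerpt (here $\rho=(1,\ldots,1)$), identifies it with $\chi_{\nu}$ for a parameter $\nu=(\nu_0,\ldots,\nu_0)\in\bbC^d$ that is parallel and is integral exactly when $s\in\bbZ$. Thus each $I(\mu_i,s_i)$ carries a single infinitesimal character $\chi_{\nu_i}$ with $\nu_i$ parallel and integral.

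Finally I would decompose along generalized infinitesimal characters. Because each summand $I(\mu_i,s_i)$ has the genuine infinitesimal character $\chi_{\nu_i}$, and submodules with distinct central characters sum directly, the generalized $\chi_\lambda$-eigenspace of $\sum_i I(\mu_i,s_i)$ equals the sum of those $I(\mu_i,s_i)$ for which $\chi_{\nu_i}=\chi_\lambda$. If $\varphi\in\calE\calN(G,\chi_\lambda)$ is nonzero, then $\varphi_0\neq0$ (by the injectivity above) lies in that sum, so $\chi_\lambda=\chi_{\nu_{i_0}}$ for some $i_0$. By the criterion recalled in the excerpt, $\chi_\lambda=\chi_{\nu_{i_0}}$ means $w\cdot\lambda=\nu_{i_0}$ for some $w\in W$; as $\nu_{i_0}$ is parallel and integral, the lemma follows.

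The step I expect to be the main obstacle is the infinitesimal-character computation in the second paragraph: one must pin down the infinitesimal character of the possibly reducible induced representation $I_v(\mu_v,s)$ and match normalizations carefully, so that the resulting global parameter comes out genuinely parallel --- not merely $W$-conjugate to a parallel vector --- and so that integrality of $s_i$ is inherited by $\nu_i$. It is precisely the triviality of $\mu$ on $F_{\infty,+}^\times$ that makes the archimedean parameters coincide across all $v\in\bfa$, which is the source of the ``parallel'' conclusion.
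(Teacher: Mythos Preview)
Your argument is correct, but the paper proves this lemma by a different, shorter route. Rather than passing to the constant term and invoking the proposition on $\varphi_0\in\sum_i I(\mu_i,s_i)$, the paper works directly in $\calE\calN(G,\chi_\lambda)$: given a nonzero $\varphi$ there, the $(\frakg,K_\infty)$-module it generates is locally $\frakp_-$-finite of finite length, so it contains a highest weight vector $\phi$ of some weight $\mu$; since $\phi$ is still orthogonal to cusp forms it is a non-cuspidal holomorphic Hilbert modular form, and Freitag's result that non-cuspidal holomorphic forms have parallel integral weight gives $\mu$ parallel and integral, whence $\chi_\lambda=\chi_\mu$ yields the desired $w$.

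The trade-off is this. Your approach is more representation-theoretic and avoids any appeal to the classical theory of Hilbert modular forms: once the constant-term proposition is in hand, the parallel and integral conclusions drop out of the single observation that the archimedean induction parameter is the same $s\in\bbZ$ at every infinite place. It also generalizes more readily, since the mechanism --- infinitesimal character of a global principal series is parallel because the inducing exponent is --- is insensitive to the specific group. The paper's approach, on the other hand, is logically lighter at this point in the exposition: in the body of the paper the constant-term proposition actually comes \emph{after} this lemma, so the author deliberately avoids it and instead leans on Freitag's elementary fact about weights of non-cuspidal holomorphic forms. If you adopt your proof, you should reorder so that the constant-term proposition is established first; there is no circularity, since its proof does not use the present lemma.
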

	
	Hence we may assume the parameter $\lambda$ lies in $\bbZ^d_{\geq 1}$ and parallel.
	For a parameter $\lambda =(\lambda_1,\ldots,\lambda_d) \in \bbZ^d_{\geq 1}$ and the corresponding infinitesimal character $\chi_\lambda$, we say that $\chi_\lambda$ is sufficiently regular if $\lambda_i \geq 3$ for all $1 \leq i \leq d$.
	For an integer $k$, set $\ul{k} = (k,\ldots,k) \in \bbZ^d$.
	For a place $v$, we denote by $\mathbf{1}_v$ and $\mathbf{1}$ the trivial character of $F_v^\times$ and the trivial Hecke character of $F^\times F_{\infty,+}^\times \bs \bbA^\times_F$, respectively.
	We denote by $\mathfrak{X}$ the set of all Hecke characters of $F^\times F_{\infty,+}^\times \bs \bbA_F^\times$.
	Put
	\[
	\mathfrak{X}_{1} = \{\mu = \otimes_v \mu_v \in \mathfrak{X} \mid \text{$\mu_v = \mathbf{1}_v$ for any $v \in \bfa$}\}
	\]
	and
	\[
	\mathfrak{X}_{-1} = \{\mu = \otimes_v \mu_v \in \mathfrak{X} \mid \text{$\mu_v = \mathrm{sgn}$ for any $v \in \bfa$}\}.
	\]
	Here $\mathrm{sgn}$ is the sign character of $\bbR^\times$.
	
	\begin{thm}[sufficienly regular case]
	For a sufficiently regular infinitesimal character $\chi_{\lambda}$ with $\lambda=\ul{k} \in \bbZ^d_{\geq 3}$, we have
	\[
	\calE\calN(G,\chi_\lambda) \cong \bigoplus_{\mu \in \mathfrak{X}_{(-1)^{k}}} \left(\bigotimes_{v < \infty} I_v(\mu_v,k-1) \otimes  L(\ul{k}) \right).
	\]
	\end{thm}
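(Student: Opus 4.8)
\medskip
\noindent\emph{Strategy of proof.}
Write $k=\lambda_1=\dots=\lambda_d\ge 3$, so that $s:=k-1\ge 2>1$ lies in the range of absolute convergence of Eisenstein series along the minimal parabolic $P_0$. The plan is to produce, via degenerate Eisenstein series, a $G(\bbA_\fini)\times\gk$-equivariant isomorphism from the right-hand side onto $\calE\calN(G,\chi_\lambda)$. We first record the archimedean local input. For $v\in\bfa$ the principal series $I_v(\mu_v,k-1)$ of $\SL_2(F_v)$ is reducible precisely when $\mu_v=\mathrm{sgn}^{k}$, in which case it is indecomposable with socle the sum of the two weight-$k$ discrete series and with finite-dimensional irreducible quotient; moreover the local factor $L(\ul k)_v$ (the holomorphic discrete series of weight $k$) is a submodule and is exactly the $\frakp_-$-finite part of $I_v(\mu_v,k-1)$. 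Indeed, $L(\ul k)_v$ is a simple Verma-type module, hence $\frakp_-$-finite; conversely the other (lowest-weight) discrete series constituent is not $\frakp_-$-finite, and by indecomposability the finite-dimensional constituent occurs only as a quotient, so no $\frakp_-$-finite vector can meet it. Consequently $\bigl(\bigotimes_{v\in\bfa}I_v(\mu_v,k-1)\bigr)_{\frakp_-\fin}$ equals $L(\ul k)$ when $\mu\in\frakX_{(-1)^{k}}$ and is $0$ otherwise. Finally, since $k-1>0$ the standard intertwining operator $M(w,s)\colon I_v(\mu_v,s)\to I_v(\mu_v,-s)$ is holomorphic at $s=k-1$, and its image there is the Langlands quotient, i.e.\ the finite-dimensional constituent; in particular $M(w,k-1)$ annihilates $L(\ul k)_v$.

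Now let $\mu\in\frakX_{(-1)^{k}}$ and $\psi\in\bigotimes_{v<\infty}I_v(\mu_v,k-1)\otimes L(\ul k)\subseteq I(\mu,k-1)$, and set $E(g,\psi)=\sum_{\gamma\in P_0(\bbQ)\bs G(\bbQ)}\psi(\gamma g)$, which converges absolutely. This is an automorphic form on $G(\bbA_\bbQ)$, orthogonal to cusp forms; it is $\gk$-equivariant in $\psi$, so from $\psi_\infty\in L(\ul k)$ being $\frakp_-$-finite we deduce that $E(\cdot,\psi)$ is nearly holomorphic, with infinitesimal character $\chi_\lambda$ (that of the constituent $L(\ul k)$ of $I(\mu,k-1)_\infty$). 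Thus $E(\cdot,\psi)\in\calE\calN(G,\chi_\lambda)$, and we obtain a $G(\bbA_\fini)\times\gk$-morphism
\[
\Phi\colon\ \bigoplus_{\mu\in\frakX_{(-1)^{k}}}\Bigl(\bigotimes_{v<\infty}I_v(\mu_v,k-1)\otimes L(\ul k)\Bigr)\ \longrightarrow\ \calE\calN(G,\chi_\lambda).
\]
By the constant-term formula the constant term of $E(\cdot,\psi)$ along $P_0$ is $\psi+M(w,k-1)\psi$, and the second summand vanishes because $\psi_\infty\in L(\ul k)$; hence the constant term recovers $\psi$. As sections of $I(\mu,k-1)$ for distinct Hecke characters $\mu$ are linearly independent, $\Phi$ is injective.

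It remains to show that $\Phi$ is onto. Let $\varphi\in\calE\calN(G,\chi_\lambda)$. Since $P_0$ is, up to conjugacy, the only proper parabolic of $G$ and $\calS\calN(G)$ is precisely the space of nearly holomorphic cusp forms, $\varphi$ has no cuspidal part and lies in the part of $\calA(G)$ with cuspidal support on the torus $M_0$; the discrete (residual) automorphic representations occurring there have infinitesimal character $\chi_{\ul 2}$, and $\chi_{\ul 2}\ne\chi_\lambda$ because $k\ge 3$, so they do not contribute. By Langlands' theory of Eisenstein series, $\varphi$ is therefore a finite sum of values at $s=k-1$ of Eisenstein series $E(\cdot,\psi_j,s)$, with $\psi_j\in I(\nu_j,s)$ for unitary Hecke characters $\nu_j$ trivial on $F_{\infty,+}^\times$, together with their $s$-derivatives. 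The $s$-derivatives cannot occur: by the description of the constant term of a nearly holomorphic form it is a sum of genuine sections in induced representations $I(\mu_i,s_i)$, whereas the constant term of $\partial_s^{a}E(\cdot,\psi,s)|_{s=k-1}$ with $a\ge1$ contains a logarithmic factor along $M_0(\bbA)$ and so is not of that shape. Thus $\varphi=\sum_j E(\cdot,\psi_j)$, and after grouping by Hecke character we may assume the $\nu_j$ distinct; then $\frakp_-^{m}\varphi=\sum_j E(\cdot,\frakp_-^{m}\psi_j)$ and linear independence of the terms, together with injectivity of $\psi\mapsto E(\cdot,\psi)$ for each fixed $\nu_j$, forces each $\psi_j$ to be $\frakp_-$-finite. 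By the first paragraph this gives $\nu_j\in\frakX_{(-1)^{k}}$ and $\psi_{j,\infty}\in L(\ul k)$, so $\varphi=\Phi(\sum_j\psi_j)$ lies in the image of $\Phi$. Hence $\Phi$ is an isomorphism of $G(\bbA_\fini)\times\gk$-modules, which is the assertion.

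The decisive step, and the one where the hypothesis \emph{sufficiently regular} is used, is the last one: one needs $s=k-1$ to lie in the convergence range so that the relevant Eisenstein series are honest holomorphic values with no residual contribution at the infinitesimal character $\chi_\lambda$, and one needs the logarithmic obstruction to eliminate the derivatives of Eisenstein series; granting this, everything reduces to the archimedean computation of $I_v(\mu_v,k-1)_{\frakp_-\fin}$ and to the vanishing of the intertwining operator on this submodule.
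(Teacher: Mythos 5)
Your proposal is correct, and its core is the same as the paper's: identify the $\frakp_-$-finite part of $I(\mu,k-1)$ with $I_\fini(\mu,k-1)\otimes L(\ul{k})$ (nonzero only for $\mu\in\mathfrak{X}_{(-1)^{k}}$), form the absolutely convergent Eisenstein series at $s=k-1$, and observe that the second summand $M_s\Phi_s$ of the constant term vanishes there --- in the paper this is Lemmas \ref{k>3_odd} and \ref{k>3_even}, proved via the vanishing of the archimedean factor $\xi(s,k)$ at $s=k-1$, which is exactly your statement that the intertwining operator annihilates $L(\ul{k})$. Where you diverge is surjectivity: you invoke the spectral description of the non-cuspidal part with cuspidal support on the torus as values and derivatives of Eisenstein series, and then exclude derivatives by the absence of logarithmic terms in the constant term. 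The paper instead runs everything through the constant term map: by definition of $\calE\calN(G)$ the map $\varphi\mapsto\varphi_0$ is injective on it, Proposition \ref{const_term_NHAF} together with Lemma \ref{local} places $\varphi_0$ inside $\bigoplus_{\mu} I_\fini(\mu,k-1)\otimes L(\ul{k})$, and the Eisenstein construction exhibits a preimage of every element of that space, so the injection is onto. Your route is also valid --- the no-logarithm observation is precisely the semisimplicity of the $M_0(\bbR)$-action established in the proof of Proposition \ref{const_term_NHAF} --- but it imports the Langlands/Franke--Schwermer decomposition, which the paper's shorter argument avoids (one can also bypass it within your setup by noting that $\varphi-\Phi(\varphi_0)$ has vanishing constant term and is orthogonal to cusp forms, hence is zero); in exchange, your version makes explicit why only values, and not derivatives, of Eisenstein series can occur in the sufficiently regular range.
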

	
	In order to study the singular infinitesimal character case i.e., $\lambda = \ul{1}$, we need the notion of theta correspondence.
	For a two-dimensional quadratic space $V$ over $F$, let $R(V)$ be the representation of the adele group $G(\bbA)$ corresponding to the trivial representation of the orthogonal group $\mathrm{O}(V)$ under the theta correspondence (see section \ref{ind_rep_theta_corr}).
	Then we may regard the representation $R(V)$ as a subrepresentation of $I(\chi_V,0)$ where $\chi_V$ is the quadratic character associated to $V$ (see \cite[Introduction]{1994_Kudla-Rallis}). 
	For Hecke characters $\mu$ and $\mu'$, we say that $\mu$ is associated to $\mu'$ if $\mu = \mu'$ or $\mu = \mu'^{-1}$.
	If $\mu$ and $\mu'$ are associated, the induced representation $I(\mu,0)$ is isomorphic to $I(\mu',0)$.
	We denote by $\mathfrak{X}_{(-1)^\ell}/ \sim$ is the associated classes of Hecke characters in $\mathfrak{X}_{(-1)^\ell}$.
	
	\begin{thm}[singular case]
	If $\lambda = \ul{1}$, we have
	\[
	\calE\calN(G,\chi_\lambda) \cong \bigoplus_{\begin{smallmatrix}\mu \in \mathfrak{X}_{-1}/\sim \\ \mu^2 \neq \mathbf{1}\end{smallmatrix}} \left(\bigotimes_{v < \infty} I_v(\mu_v,0) \otimes L(\ul{1}) \right) \oplus \bigoplus_V R(V).
	\]
	Here $V$ runs through all isometry classes of two-dimensional quadratic spaces over $F$ such that $V_v$ has signature $(2,0)$ for all $v \in \bfa$.
	\end{thm}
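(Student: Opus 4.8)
The plan is to study $\calE\calN(G,\chi_{\ul 1})$ through the constant-term map along $P_0$ together with Langlands' theory of Eisenstein series, reducing all contributions to the point $s=0$ of the principal series, and then to resolve the reducible case $\mu^2=\mathbf 1$ by means of the Siegel--Weil formula. First I would record that, since $B$ is the only proper parabolic subgroup of $\SL_2$, an element of $\calN(G)$ whose $P_0$-constant term vanishes is a cusp form and hence lies in $\calS\calN(G)$; as $\calE\calN(G)$ is the orthogonal complement of $\calS\calN(G)$, the map $\varphi\mapsto\varphi_0$ is injective on $\calE\calN(G,\chi_{\ul 1})$. Next, combining the Proposition on constant terms with the fact that the principal series $I(\mu,s)$ has infinitesimal character $\chi_{\ul{s+1}}$, matching with the (singular, $W$-fixed) character $\chi_{\ul 1}$ forces $s=0$, while at a real place $I_v(\mathbf 1_v,0)$ is an irreducible tempered representation containing no nonzero $\frakp_-$-finite vector; hence $\mu_v=\mathrm{sgn}$ for all $v\in\bfa$, i.e.\ $\mu\in\mathfrak{X}_{-1}$. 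Since $I_v(\mathrm{sgn},0)$ at a real place is the direct sum of its holomorphic and anti-holomorphic constituents, the holomorphic one being $L(1)$ and the only one on which $\frakp_-$ acts locally nilpotently, the archimedean component of every surviving piece is $L(\ul 1)$, and $\calE\calN(G,\chi_{\ul 1})$ embeds into $\bigoplus_{\mu\in\mathfrak{X}_{-1}}\bigl(\bigotimes_{v<\infty}I_v(\mu_v,0)\otimes L(\ul 1)\bigr)$.

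I would then split into two cases. If $\mu^2\neq\mathbf 1$, the Eisenstein series $E(\phi,s)$ attached to a section $\phi\in I(\mu,s)$ is holomorphic at $s=0$, the assignment $\phi\mapsto E(\phi,0)$ is $G(\bbA_\bbQ)$-equivariant and injective (its $P_0$-constant term is $\phi+M(0)\phi$ with $M(0)\colon I(\mu,0)\to I(\mu^{-1},0)$, which vanishes only for $\phi=0$ since $\mu\neq\mu^{-1}$), and for $\phi$ whose archimedean components generate the holomorphic constituents the form $E(\phi,0)$ is $\frakp_-$-finite, orthogonal to cusp forms, and non-cuspidal. This realizes the whole summand $\bigotimes_{v<\infty}I_v(\mu_v,0)\otimes L(\ul 1)$ inside $\calE\calN(G,\chi_{\ul 1})$, and by injectivity of the constant-term map together with Langlands' spectral decomposition nothing more arises from non-quadratic $\mu$. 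Passing to associated characters ($I(\mu,0)\cong I(\mu^{-1},0)$) yields the first summand of the statement, each occurring with multiplicity one.

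If $\mu^2=\mathbf 1$, then $\mu$ is a nontrivial quadratic character, $s=0$ is a point of reducibility, the standard intertwining operator $M(0)$ is a nontrivial involution of $I(\mu,0)$, and the automorphic forms of this type in $\calE\calN(G)$ are the values $E(\phi,0)$, whose span is the subrepresentation of $\bigotimes_v I_v(\mu_v,0)$ cut out by $M(0)$. The key step is to identify this span, via the Siegel--Weil formula for the dual pair $(\SL_2,\mathrm{O}(V))$, with the sum of the theta lifts $R(V)$ over all two-dimensional quadratic spaces $V/F$ with $\chi_V=\mu$ (the local constituents of $I_v(\mu_v,0)$ being the theta lifts of the trivial representations of the groups $\mathrm{O}(V_v)$, and the product formula for Hasse invariants matching the local choices). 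Since $R(V)_v$ is the holomorphic constituent $L(1)$ precisely when $V_v$ has signature $(2,0)$, and since $\chi_V=\mu\in\mathfrak{X}_{-1}$ forces $V$ to be definite (never split) at every real place, the representation $R(V)$ is $\frakp_-$-finite if and only if $V_v$ has signature $(2,0)$ for all $v\in\bfa$; distinct isometry classes give distinct summands by the local theta dichotomy at the finite places. Collecting the contributions over all quadratic $\mu\in\mathfrak{X}_{-1}$ reorganizes them into $\bigoplus_V R(V)$, giving the second summand.

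The main obstacle is the case $\mu^2=\mathbf 1$: identifying the image of $\phi\mapsto E(\phi,0)$ with $\bigoplus_{\chi_V=\mu}R(V)$ requires the Siegel--Weil formula in this low-rank (``boundary'') range, where the theta integral converges because a two-dimensional quadratic space with nontrivial discriminant character is anisotropic, and a count of the $V$ realizing a prescribed $\chi_V$ and positive definite at all real places rests on the classification of two-dimensional quadratic spaces by discriminant and Hasse invariants. The remaining points---that no residual spectrum interferes (residues of Borel Eisenstein series on $\SL_2$ occur at $s=1$ and carry the infinitesimal character $\chi_{\ul 2}$, not $\chi_{\ul 1}$), that the theta lifts are orthogonal to cusp forms, and the bookkeeping that assembles the global isotypic decomposition---are routine given the local representation theory and Langlands' spectral theory.
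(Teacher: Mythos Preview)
Your approach is correct and structurally close to the paper's, but the two differ in one load-bearing step and in the tool invoked for the quadratic case.

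The paper does not argue directly from Langlands' spectral decomposition that every element of $\calE\calN(G,\chi_{\ul 1})$ is a value of an Eisenstein series at $s=0$. Instead it quotes a classical theorem of Shimura (\cite[Theorem 8.3]{85_shimura}) which says, in effect, that for every congruence subgroup $\Gamma$ the orthogonal complement $\calE_1(\Gamma)$ of cusp forms inside $M_{\ul 1}(\Gamma)$ coincides with the space spanned by holomorphic Eisenstein series of weight $\ul 1$. After lifting via the dictionary (\ref{corr_MF_AF}), this gives the statement ``every $\varphi\in\calE\calN(G,\chi_{\ul 1})$ is a finite sum of Eisenstein series'' as a black box, and the rest of the argument proceeds as you describe. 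Your route via injectivity of the constant term plus Langlands theory reaches the same conclusion, but you should be aware that extracting pointwise values from the direct-integral decomposition requires care (derivatives of Eisenstein series can appear in $\calA(G)_{(M,\tau)}$; cf.\ the paper's Appendix); the paper sidesteps this by appeal to Shimura.

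For the quadratic case $\mu^2=\mathbf 1$, you attribute the identification of the automorphic image with $\bigoplus_{\chi_V=\mu} R(V)$ to the Siegel--Weil formula. The paper instead invokes the Kudla--Rallis realization theorem (Theorem~\ref{KR} here, \cite[Theorem 3.1]{1994_Kudla-Rallis}): $R(V)$ embeds in $\calA(G)$ via the value of the Eisenstein series, while $\Hom(R(\calC),\calA(G))=0$ for every incoherent family $\calC$. This is exactly what rules out the $-1$-eigenspace of $M(0)$ from the image of the constant-term map. The Siegel--Weil formula (for anisotropic $V$) identifies $E(\,\cdot\,,0,\Phi_\varphi)$ with a theta integral and thereby explains \emph{why} $R(V)$ sits in the $+1$-eigenspace, but the non-occurrence of $R(\calC)$ is a separate (and slightly deeper) statement. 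Your argument is fine once you cite the Kudla--Rallis result; calling it ``Siegel--Weil'' undersells what is actually being used.
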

	
	The remaining case is $\lambda = \ul{2}$.
	If $F=\bbQ$, let $E_{2,\bbA}$ be the unique (up to constant) unramified vector of weight $2$ in $\calN(G)$ (see Theorem \ref{NHMF_str}).
	
	\begin{thm}[regular but non-sufficiently regular case]
	Let $\pi$ be the $G(\bbA_\fini) \times (\frakg,K_\infty)$-module generated by $E_{2,\bbA}$.
	\begin{enumerate}
	\item If $F=\bbQ$, we have the following non-split exact sequence
	\[
	0 \longrightarrow \bbC \longrightarrow \pi \longrightarrow \bigotimes_{v < \infty} I_v(\mathbf{1}_v,1) \otimes L(2) \longrightarrow 0.
	\]
	\item If $\lambda = \ul{2}$, we have
	\[
	\calE\calN(G,\chi_\lambda) \cong 
	\begin{dcases}
	\bigoplus_{\mu \in \mathfrak{X}_1} \left(\bigotimes_{v < \infty} I_v(\mu_v,1) \otimes L(\ul{2}) \right) \oplus \bbC & \text{if $F \neq \bbQ$},\\
	\bigoplus_{\begin{smallmatrix}\omega \in \mathfrak{X}_1 \\ \omega \neq \mathbf{1} \end{smallmatrix}} \left(\bigotimes_{v < \infty} I_v(\omega_v,1) \otimes L(2)\right) \oplus \pi & \text{if $F = \bbQ$}.
	\end{dcases}
	\]
	\end{enumerate}
	\end{thm}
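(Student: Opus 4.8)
The plan is to adapt the strategy of the sufficiently regular case, using the constant-term Proposition together with Langlands' theory of Eisenstein series to realize $\calE\calN(G,\chi_{\ul{2}})$ inside an algebraic direct sum of induced representations and then identify the image. By the Lemma we may take $\lambda=\ul{2}$, parallel and integral, so that only Hecke characters in $\mathfrak{X}_1$ occur, and every $\varphi\in\calE\calN(G,\chi_{\ul{2}})$ is a value or a residue at $s=k-1=1$ of an Eisenstein series $E(\phi,s)$, $\phi\in I(\mu,s)$, with $\mu\in\mathfrak{X}_1$ and $\phi_\infty$ in the weight-$\ul{2}$ $K_\infty$-type of $I_\infty(\mathbf{1}_\infty,s)$. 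The feature distinguishing $\lambda=\ul{2}$ from $\lambda=\ul{k}$, $k\ge 3$, is that $s=1$ is the rightmost pole of the $\SL_2$-Eisenstein series exactly when $\mu=\mathbf{1}$, the residue there being a constant function; and since the weight-two holomorphic module $L(\ul{2})$ and the trivial representation $\bbC$ share the infinitesimal character $\chi_{\ul{2}}$, this residual constant genuinely contributes and must be tracked.

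The first step is to dispose of the strata with $\mu\neq\mathbf{1}$. For any $\mu\in\mathfrak{X}_1$ with $\mu\neq\mathbf{1}$ the global intertwining operator on $I(\mu,s)$ is holomorphic at $s=1$ --- its only possible pole there would come from the completed Hecke $L$-function $\Lambda(s,\mu)$ in the numerator of the $c$-function $\Lambda(s,\mu)/\Lambda(s+1,\mu)$, which is entire for $\mu\neq\mathbf{1}$ --- so $E(\phi,s)$ is holomorphic at $s=1$. At $s=1$ the weight-$\ul{2}$ vector $\phi_\infty$ lies in the lowest-weight submodule $L(\ul{2})\subset I_\infty(\mathbf{1}_\infty,1)$, hence is annihilated by $\frakp_-$, so $\frakp_-E(\phi,1)=0$; letting the finite components vary, the values $E(\phi,1)$ fill out exactly one copy of $\bigotimes_{v<\infty}I_v(\mu_v,1)\otimes L(\ul{2})$. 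That these copies are independent, exhaust the $\mu\neq\mathbf{1}$ part of $\calE\calN(G,\chi_{\ul{2}})$, and occur with multiplicity one follows from injectivity of the constant-term map on $\calE\calN(G)$, as in the sufficiently regular case.

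The crux is the stratum $\mu=\mathbf{1}$, where $s=1$ is the rightmost pole of the spherical Eisenstein series with a nonzero constant residue $R$, and where the outcome depends on $d=[F:\bbQ]$. The mechanism is that for an archimedean place $v$, $\frakp_{-,v}$ sends the weight-$\ul{2}$ section of $I_\infty(\mathbf{1}_\infty,s)$ to $(s-1)$ times the section of weight $(2,\dots,0,\dots,2)$ (weight $0$ at $v$), so $\frakp_{-,v}E(\phi^{(\ul{2})},s)=(s-1)\,E(\phi^{(v)},s)$ up to a nonzero constant. For $F\neq\bbQ$ ($d\ge 2$), the Eisenstein series $E(\phi^{(v)},s)$ --- now with $d-1\ge 1$ archimedean weight-two components --- has no pole at $s=1$: the simple pole of $\Lambda_F(s)$ in the $c$-function is outweighed by the zeros at $s=1$ contributed by the archimedean intertwining operators on the weight-two $K$-types. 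Hence $\frakp_{-,v}E(\phi^{(\ul{2})},1)=0$ for every $v$, so the weight-$\ul{2}$ Eisenstein series attached to $\mathbf{1}$ is genuinely holomorphic (in agreement with the proposition $\calN(\Gamma,\chi_{\ul{2}})=L(\ul{2})^{\oplus m(\Gamma,\ul{2})}\oplus\bbC$); it generates $\bigotimes_{v<\infty}I_v(\mathbf{1}_v,1)\otimes L(\ul{2})$ on the nose, and the residual constant spans an independent summand $\bbC$. Summing over $\mathfrak{X}_1$ and adjoining this one extra $\bbC$ gives the first line of (2). For $F=\bbQ$ ($d=1$), $E(\phi^{(v)},s)$ is the spherical Eisenstein series, which has a simple pole at $s=1$ with residue $R\neq0$ the constant function; so $E_{2,\bbA}$ --- up to scalar the value at $s=1$ of the weight-two Eisenstein series attached to $\mathbf{1}$, and the automorphic completion of the quasi-modular $E_2$ characterized in Theorem \ref{NHMF_str} --- satisfies $\frakp_-E_{2,\bbA}=R\neq0$ and $\frakp_-^2E_{2,\bbA}=0$ (the classical non-holomorphy of $E_2$). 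Consequently $\bbC$ is a submodule of $\pi=\langle E_{2,\bbA}\rangle$ with $\pi/\bbC\cong\bigotimes_{v<\infty}I_v(\mathbf{1}_v,1)\otimes L(2)$ (as $E_{2,\bbA}$ is $K_v$-spherical of weight $2$ and the spherical vector generates $I_v(\mathbf{1}_v,1)$), the extension being non-split since a splitting would force the generator to be a genuine $\frakp_-$-annihilated lowest-weight vector --- this is (1). Since the $\mu=\mathbf{1}$ stratum for $F=\bbQ$ is exactly $\pi$, assembling the strata gives the second line of (2).

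I expect the main obstacle to be precisely this $\mu=\mathbf{1}$ stratum and the $F=\bbQ$ versus $F\neq\bbQ$ dichotomy it encodes: making rigorous the pole-versus-zero accounting at $s=1$ --- the simple pole of $\Lambda_F(s)$ against the zeros of the archimedean intertwining operators on the weight-two $K$-types, a net zero when $d\ge 2$ and a net pole when $d=1$ --- and thereby deducing the non-splitness over $\bbQ$ from $\frakp_-E_{2,\bbA}\neq0$ and the genuine holomorphy (hence splitting-off of $\bbC$) over $F\neq\bbQ$. Secondary technical points are verifying injectivity and surjectivity of the constant-term map on $\calE\calN(G)$ --- so that the enumeration of possible constant terms is exhaustive and multiplicity-free --- and checking that no flat section other than those described produces a nearly holomorphic form of infinitesimal character $\chi_{\ul{2}}$.
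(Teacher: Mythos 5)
Your overall strategy is the paper's: realize $\calE\calN(G,\chi_{\ul{2}})$ through its constant term inside a sum of induced representations, use the constant-term formula for the Eisenstein series at $s=1$ (where the order of vanishing of the archimedean factor versus the simple pole of $\Lambda_F(s)$ produces exactly the $F=\bbQ$ vs.\ $F\neq\bbQ$ dichotomy), and pin down the $F=\bbQ$ picture with the classical facts $\frakp_-E_{2,\bbA}\neq 0$ and $\dim N_2(\SL_2(\bbZ))=1$, $M_2(\SL_2(\bbZ))=0$. Your identity $\frakp_{-,v}E(\phi^{(\ul{2})},s)=c(s-1)E(\phi^{(v)},s)$ is a clean repackaging of the computation the paper performs via the explicit constant term (Lemma \ref{k=2}), and your treatment of $\mu\neq\mathbf{1}$ and of $F\neq\bbQ$ is correct.

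The genuine gap is the exhaustion step for the $\mu=\mathbf{1}$ stratum when $F=\bbQ$, which you assert ("the $\mu=\mathbf{1}$ stratum is exactly $\pi$") and then relegate to a "secondary technical point." It is in fact the heart of the proof, because $\lambda=\ul{2}$ is precisely the case where the constant term of a weight-two nearly holomorphic form can have a component not only in $I(\mathbf{1},1)$ but also in $I(\mathbf{1},-1)$, whose $\frakp_-$-finite archimedean part is the \emph{nonsemisimple} module $N(0)^\vee$ with the same infinitesimal character $\chi_2$. A priori $\calE\calN(G,\chi_2)$ could contain constituents of $I_\fini(\mathbf{1},-1)\otimes N(0)^\vee$ beyond the constants, and nothing in your Eisenstein-series enumeration rules this out. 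The paper closes this by sandwiching $\tau\subset\calE\calN(G,\chi_2)\subset\Pi$ with $\tau=\bbC\oplus\bigoplus_v M^{(v)}\oplus\bigoplus_{\mu\neq\mathbf{1}}I_\fini(\mu,1)\otimes L(2)$ and $\Pi=I_\fini(\mathbf{1},-1)\otimes N(0)^\vee\oplus\bigoplus_\mu I_\fini(\mu,1)\otimes L(2)$ (the restriction of the $s=-1$ part to the unramified character uses that $Y\cdot\varphi$ has weight $0$, hence is constant), computing that $\Pi/\tau$ has length two with a two-dimensional space of unramified weight-two vectors, and invoking $\dim N_2(\SL_2(\bbZ))=1$ to force $\calE\calN(G,\chi_2)/\tau$ to be a single irreducible $\mathrm{triv}_{G(\bbA_\fini)}\otimes L(2)$. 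The same upper bound is what makes the kernel of $\pi\to I_\fini(\mathbf{1},1)\otimes L(2)$ exactly $\bbC$ in part (1); your parenthetical "the spherical vector generates $I_v(\mathbf{1}_v,1)$" gives surjectivity of that map but not the identification of the kernel. You have all the needed ingredients on the table, but without this counting argument the claimed equalities are not yet proved.
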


	Finally, we compare the $G(\bbA_\fini) \times (\frakg,K_\infty)$-module generated by holomorphic automorphic forms and the space of nearly holomorphic automorphic forms.
	Let $\mathcal{H}(G)$ be the $G(\bbA_\fini)\times (\frakg,K_\infty)$-module generated by holomorphic automorphic forms.
	
	\begin{cor}
	As a $G(\bbA_\fini) \times (\frakg,K_\infty)$-module, we have
	\[
	\calN(G)/\mathcal{H}(G) \cong
	\begin{dcases}
	\mathrm{triv}_{G(\bbA_\fini)} \otimes L(2) &\text{if $F=\bbQ$},\\
	0 &\text{if $F \neq \bbQ$}.
	\end{dcases}
	\]
	Here $\mathrm{triv}_{G(\bbA_\fini)}$ is the trivial representation of $G(\bbA_\fini)$.
 	\end{cor}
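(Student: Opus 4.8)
The plan is to compare $\calN(G)$ with $\mathcal{H}(G)$ one infinitesimal character at a time, feeding in the three structure theorems above, and then to pin down the unique missing constituent in the case $F=\bbQ$. First I would reduce to the Eisenstein part and make the comparison local in the infinitesimal character. The decomposition $\calN(G)=\calS\calN(G)\oplus\calE\calN(G)$, the generalized infinitesimal-character eigenspace decomposition of $\calE\calN(G)$, and --- when $F=\bbQ$ and $\lambda=\ul{2}$ --- the splitting off of the submodule $\pi$ generated by $E_{2,\bbA}$, are all cut out by operators commuting with the $\frakp_-$-action: the centre of $U(\frakg)$ and idempotents in the Hecke algebra at the finite places isolating the relevant Hecke characters. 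Since $\mathcal{H}(G)$ is generated by $\frakp_-$-annihilated vectors, it respects these direct-sum decompositions, so it suffices to treat each summand separately. For $\calS\calN(G)$: a cuspidal, hence unitary, irreducible $(\frakg,K_\infty)$-module containing a nonzero $\frakp_-$-finite vector must be a holomorphic discrete series or a limit of such, i.e.\ some $L(\mu)$; by semisimplicity of the cuspidal spectrum, $\calS\calN(G)$ is a direct sum of modules with archimedean factor $L(\mu)$, each generated by its (holomorphic) lowest weight vector, so $\calS\calN(G)\subseteq\mathcal{H}(G)$. By the Lemma, only parameters $W$-conjugate to a parallel integral one can contribute to $\calN(G)/\mathcal{H}(G)$.

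Next I would dispose of every infinitesimal character other than $\chi_{\ul{2}}$ with $F=\bbQ$. In the sufficiently regular case, each summand of $\calE\calN(G,\chi_{\ul{k}})$ with $k\ge3$ has archimedean factor $L(\ul{k})$ and is therefore generated by its lowest weight vectors, namely holomorphic Hilbert Eisenstein series of parallel weight $\ul{k}$. For $\lambda=\ul{1}$, the summands $\bigotimes_{v<\infty}I_v(\mu_v,0)\otimes L(\ul{1})$ are generated by holomorphic weight-$\ul{1}$ Eisenstein series (holomorphic because $\mu^2\ne\mathbf{1}$), and each $R(V)$ is generated by the holomorphic theta series attached to the totally positive definite binary space $V$. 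For $\lambda=\ul{2}$ with $F\ne\bbQ$, the summands with $\mu\ne\mathbf{1}$ come from holomorphic weight-$\ul{2}$ Eisenstein series with character, the summand with $\mu=\mathbf{1}$ from the parallel weight-two Eisenstein series $E_{\ul{2}}$, and the remaining summand is the space of constant functions; the arithmetic input here is the classical theorem that $E_{\ul{2}}$ is a holomorphic Hilbert modular form precisely when $F\ne\bbQ$ (over $\bbQ$ the weight-two Eisenstein series $E_2$ is not modular). Hence every summand not involving $\pi$ already lies in $\mathcal{H}(G)$, and in particular $\calN(G)=\mathcal{H}(G)$ when $F\ne\bbQ$.

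It remains to analyse $\pi$ for $F=\bbQ$ and $\lambda=\ul{2}$, which produces the corollary. By the reductions above, $\calN(\bbQ)/\mathcal{H}(\bbQ)\cong\pi/\pi^{\mathrm{hol}}$ with $\pi^{\mathrm{hol}}=\pi\cap\mathcal{H}(\bbQ)$. From the non-split sequence $0\to\bbC\to\pi\to\bigotimes_{v<\infty}I_v(\mathbf{1}_v,1)\otimes L(2)\to0$ and the length-two structure $0\to\mathrm{St}_v\to I_v(\mathbf{1}_v,1)\to\mathbf{1}_v\to0$ at each finite place, the kernel of the surjection $\pi\twoheadrightarrow\mathrm{triv}_{G(\bbA_\fini)}\otimes L(2)$ induced by $\bigotimes_v I_v(\mathbf{1}_v,1)\twoheadrightarrow\bigotimes_v\mathbf{1}_v$ is generated by $\bbC$ together with the Steinberg pieces $\mathrm{St}_p\otimes\bigl(\bigotimes_{v\ne p}I_v(\mathbf{1}_v,1)\bigr)\otimes L(2)$. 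I would then check $\pi^{\mathrm{hol}}$ is exactly this kernel. For $\supseteq$: the constant functions are holomorphic and lie in $\pi$, and the $p$-th Steinberg piece is generated by the adelization of the holomorphic Eisenstein newform $E_2(z)-pE_2(pz)\in M_2(\Gamma_0(p))$, which lies in $\pi$ since it has trivial Hecke character and which is new at $p$ since its trace to level one, a holomorphic weight-two form for $\SL_2(\bbZ)$, vanishes. For $\subseteq$: if a holomorphic form in $\pi$ had nonzero image in $\mathrm{triv}_{G(\bbA_\fini)}\otimes L(2)$, then averaging it over $\SL_2(\widehat{\bbZ})$ --- which commutes with $\frakp_-$ and fixes the image, the finite part of the target being trivial --- would produce a nonzero everywhere-unramified holomorphic weight-two form in $\pi$; but the space of everywhere-unramified vectors of $\pi$ is, as a $(\frakg,K_\infty)$-module, a non-split extension of $L(2)$ by $\bbC$ (witnessed by $E_{2,\bbA}$, whose image under $\frakp_-$ is a nonzero constant function), and such an extension has no $\frakp_-$-annihilated weight-two vector, for that would split it. This gives $\pi/\pi^{\mathrm{hol}}\cong\mathrm{triv}_{G(\bbA_\fini)}\otimes L(2)$, hence the claimed isomorphism for $F=\bbQ$.

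The main obstacle is this last identification of $\pi^{\mathrm{hol}}$: one must simultaneously show that the prime-level holomorphic Eisenstein newforms generate the entire non-spherical part of $\pi$ --- which requires a precise description of the internal structure of $\bigotimes_{v<\infty}I_v(\mathbf{1}_v,1)$ and the identification of the adelic vector attached to $E_2(z)-pE_2(pz)$ with the Steinberg newvector at $p$ --- and that no holomorphic vector in $\pi$ reaches the spherical quotient, which is the representation-theoretic shadow of $M_2(\SL_2(\bbZ))=0$ and of the non-holomorphy of $E_2$. Everything else is bookkeeping with the structure theorems and with standard properties of holomorphic Eisenstein and theta series.
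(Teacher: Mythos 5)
Your proposal is correct and follows essentially the same route as the paper: reduce via the direct-sum decompositions of Theorem \ref{main} to checking that every summand except the top constituent of $\pi$ (for $F=\bbQ$, $\lambda=\ul{2}$) is generated by its holomorphic (lowest weight) vectors, and then show that the holomorphic vectors of $\pi$ generate only $\bbC\oplus\bigoplus_{v}M^{(v)}$, leaving the quotient $\mathrm{triv}_{G(\bbA_\fini)}\otimes L(2)$. Your averaging argument and the non-splitness witnessed by $\frakp_-E_{2,\bbA}\neq 0$ are just a more explicit rendering of the paper's appeal to the maximality of $\tau$ in the proof of Theorem \ref{main}(4), so there is nothing further to add.
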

	
	Note that the representation $\mathrm{triv}_{G(\bbA_\fini)} \otimes L(2)$ is an irreducible quotient of $\pi$.

	\subsection*{Acknowledgments}
	The auther would like to thank Professor Tamotsu Ikeda for helpful discussions, suggestions and encouragements.
	The auther also would like to thank Ameya Pitale and Ralf Schmidt for helpful discussions.

\section{The space of nearly holomorphic automorphic forms}\label{def_NHMF}
	In this section, we define the space of nearly holomorphic automorphic forms on a connected reductive group $G$ over $\bbQ$ with certain properties.

	\subsection{Definition of nearly holomorphic automorphic forms}
	Let $G$ be a connected reductive group over $\bbQ$.
	In this section, we denote by $\bbA$ the adele ring of $\bbQ$ and by $\bbA_\fini$ the finite adele ring of $\bbQ$.
	For example, $G = \Res_{F/\bbQ} \Sp_{2n}$ where $\Res$ is the Weil restriction and $F$ is a totally real number field.

	For a $\bbQ$-algebra $R$, let $G(R)$ the group of $R$-valued points of $G$.
	For a place $v$ of $\bbQ$ and $R=\bbQ_v$, put $G_v=G(\bbQ_v)$.
	We denote by $\infty$ the archimedean place of $\bbQ$.
	Take a maximal compact subgroup $K_v$ of $G_v$ for any place $v$.
	Set $K = \prod_{v} K_v$.
	We denote by $A_G^\infty$ the identity component of the split component of the center of $G_\infty$.
	
	For any topological group $H$, we denote by $H^\circ$ the identity component of $H$.
	Set
	\[
	G_\infty^{1} = \bigcap_{\chi \in \Hom_{\mathrm{conti}} (G_\infty^\circ,\bbR_+^\times)}^{}\mathrm{Ker}(\chi).
	\]
	It is a normal subgroup which contains the derived subgroup and all compact subgroups of $G_\infty^\circ$.
	Then, by \cite[Lemma 2.2.2]{Wallach}, we have
	\[
	A_G^\infty \cap G_\infty^{1} = \{1\}, \qquad G_\infty^\circ = G_\infty^1 A_G^\infty, \qquad K_\infty^\circ \subset G_\infty^{1}.
	\]
	Hence we have the direct product decomposition $G_\infty^\circ = G_\infty^1 \times A_G^\infty$ as a topological group.
	Since $A_G^\infty$ is connected, the semisimple Lie group $G_\infty^1$ is connected.
	Note that the compact group $K_\infty \cap G_\infty^\circ$ is a connected maximal compact subgroup of $G_\infty^{\circ}$.
	In general, the compact group $K_\infty$ is not connected.
	In this paper, we assume that the homogeneous space $\frakH = G_\infty^1/K_\infty^\circ \cong G_\infty^\circ/A_G^\infty K_\infty^\circ$ is a Hermitian symmetric space (cf.~\cite[$\S$ 2.3]{satake}).
	
	Let $\frakg^1 = \mathrm{Lie}(G_\infty^1) \otimes_\bbR \bbC$ and $\frakk=\mathrm{Lie}(K_\infty^\circ) \otimes_\bbR \bbC$.
	Let $\frakp_+$ (resp.~$\frakp_-$) be the Lie subalgebra of $\frakg^1$ corresponding to the holomorphic tangent space (resp.~anti-holomorphic tangent space) at the base point $K_\infty^\circ \in \frakH = G_\infty^1/K_\infty^\circ$.
	We then have the decomposition
	\[
	\frakg^1 = \frakk + \frakp_+ + \frakp_-.
	\]
	For details, see \cite[$\S$ 2.3, (3.9)]{satake}.
	Put $\frakg = \mathrm{Lie}(G^\circ_\infty) \otimes_{\bbR} \bbC$.
	Let $\calU(\frakg)$ be the universal enveloping algebra of $\frakg$ and $\calZ$ the center of $\calU(\frakg)$.
	
	Fix a minimal parabolic subgroup $P_0$ of $G$ over $\bbQ$.
	We say that a parabolic subgroup $P$ of $G$ is standard if $P$ contains $P_0$.
	We assume that the maximal compact subgroup $K = \prod_v K_v$ of $G(\bbA)$ satisfies the following conditions (cf.~\cite[$\S$ I.1.4]{MW}):
	\begin{itemize} 
	\item $G(\bbA) = P_{0} (\bbA) K$.
	\item $P(\bbA) \cap K = (M(\bbA) \cap K)(N(\bbA) \cap K)$.
	\item $M(\bbA) \cap K$ is a maximal compact subgroup of $M(\bbA)$. 
	\end{itemize}
	Here $P$ runs through all standard parabolic subgroups of $G$ with Levi decomposition $P=MN$ and $M$ is a standard Levi subgroup.

	\begin{dfn}(\cite[Definition I.2.17]{MW}).
	Let $P=MN$ be a standard parabolic subgroup of $G$.
	For a smooth function $\phi: N(\bbA)M(\bbQ) \backslash G(\bbA) \longrightarrow \bbC$, we say that $\phi$ is automorphic if it satisfies the following conditions:
	\begin{itemize}
	\item $\phi$ is right $K$-finite.
	\item $\phi$ is $\mathcal{Z}$-finite.
	\item $\phi$ is slowly increasing.
	\end{itemize}
	We denote by $\mathcal{A}(P \backslash G)$ the space of automorphic forms on $ N(\bbA)M(\bbQ) \bs G(\bbA)$.
	For simplicity, we write $\calA(G)$ when $P=G$.
	Let $\calA(A \bs G)$ be the space of automorphic forms on $A_G^\infty \bs G(\bbA)$.
	The space $\mathcal{A}(P \backslash G)$ is a $G(\bbA_\fini) \times \gk$-module by the right translation.
	For an automorphic form $\varphi$, we say $\varphi$ is nearly holomorphic (resp.~holomorphic) if $\varphi$ is $\frakp_-$-finite (resp.~$\frakp_- \cdot \varphi =0$).
	Let $\NHAFonPG$ be the subspace of $\mathcal{A}(P \backslash G)$ consisting of all nearly holomorphic automorphic forms.
	We call a function $\phi \in \NHAFonPG$ a nearly holomorphic automorphic form.
	Note that the space $\NHAFonPG$ is a $G(\bbA_\fini) \times \gk$-module by the right translation.
	We define $\calA(AP \bs G)$ and $\calN(AP \bs G)$ similarly.
	\end{dfn}
	
	\begin{rem}
	The above definition of near holomorphy seems different from Shimura's definition.
	He defined the notion of near holomorphy of smooth functions on a K\"{a}hler manifold.
	When we lift a function on the Hermitian symmetric spaces to a function on the group (cf.~(\ref{lift_to_group})), our definition and Shimura's definition are equivalent for some cases by \cite[$\S$ 7]{90_shimura}.
	For example, in the case when $G= \mathrm{Res}_{F/\bbQ}\SL_2$ with a totally real number field $F$, the definitions are equivalent (cf.~(\ref{corr_MF_AF})).
	\end{rem}
		
	For an infinitesimal character $\chi : \calZ \longrightarrow \bbC$, let
	\[
	\calN(P \backslash G,\chi) = \{ \phi \in \NHAFonPG \mid z \cdot \phi = \chi(z) \phi, \, \text{for any $z \in \calZ$}\},
	\]
	and $\calN(P \backslash G,\chi)^\text{gen}$ the generalized $\chi$-eigenspace in $\calN(P \bs G)$.
	Since any function in $\NHAFonPG$ is $\calZ$-finite, we have
	\[
	\NHAFonPG = \bigoplus_{\chi} \calN(P\backslash G,\chi)^\text{gen},
	\]
	where $\chi$ runs through all infinitesimal characters.
	
	\begin{quest}
	Does $\NHAFonPG$ decompose as $\bigoplus_{\chi} \calN(P\backslash G,\chi)$ ?
	\end{quest}
	
	In the case where $P=G=\SL_2$ or $\Sp_4$, this question is affirmative by the results of Pitale-Saha-Schmidt \cite{pss1,pss2}.
	Indeed, in such cases, for a nearly holomorphic automorphic form $\varphi$, a $(\frakg,K_\infty)$-module generated by $\varphi$ is semisimple as a $\calZ$-module.

	\subsection{Automorphic forms and classical modular forms}\label{AF_vs_MF}
	We first define a factor of automorphy as follows:
	We follow the book \cite[Chap.~II]{satake}.
	Put $\calP_+=\exp(\frakp_+)$ and $\calP_-=\exp(\frakp_-)$.
	For simplicity, we denote by $K_\bbC$ and $K^\circ_\bbC$ the complexification of $K_\infty$ and the identity component of $K_\bbC$, respectively.
	Note that the connected group $K^\circ_\bbC$ is the complexification of $K_\infty^\circ$.
	We then have
	\[
	\calP_+ A_G^\infty \cap K^\circ_\bbC \calP_- = \{1\}, \qquad G^1_\infty \subset \calP_+ K^\circ_\bbC\calP_-, \qquad G^1_\infty \cap K^\circ_\bbC\calP_- = K_\infty^\circ.
	\]
	Hence we obtain the Harish-Chandra embedding $\frakH \lhook\joinrel\longrightarrow \frakp_+$:
	\[
	\frakH \cong G^\circ_\infty / A_G^\infty K_\infty^\circ \cong G_\infty^1K^\circ_\bbC \calP_-/K^\circ_\bbC \calP_- \lhook\joinrel\longrightarrow \calP_+K^\circ_\bbC\calP_-/K^\circ_\bbC \calP_- \cong \calP_+ \cong \frakp_+.
	\]
	We denote by $\calD$ the image of the embedding.
	For $g \in \calP^+A_G^\infty K_\bbC^\circ \calP_-$, let $(g)_+$, $(g)_{A_G^\infty}$, $(g)_0$ and $(g)_-$ be the components of $g$ corresponding to $\calP_+$, $A_G^\infty$, $K_\bbC^\circ$ and $\calP_-$, respectively,
	i.e.,
	\[
	g= (g)_+(g)_{A_G^\infty}(g)_0(g)_-, \qquad (g)_+ \in \calP_+, \,\, (g)_{A_G^\infty}\in A_G^\infty, \,\, (g)_0 \in K_\bbC^\circ, \,\, (g)_-\in \calP_-.
	\]
	Clearly, the equality $\calD=\{\exp^{-1}((g)_+) \in \calP_+ \mid g \in G_\infty^\circ \}$ holds.
	For $g \in G_\infty^\circ$ and $z \in \calD$, we define elements $g(z) \in \calD$ and $J(g,z) \in K_\bbC^\circ$ by
	\[
	\exp(g(z)) = (g \exp(z))_+, \qquad J(g,z) = (g \exp(z))_0. 
	\]
	The function $J(g,z)$ is called the canonical factor of automorphy.
	For a finite-dimensional representation $(\sigma,V)$ of $K_\bbC^\circ$, put $J_\sigma(g,z) = \sigma (J(g,z))$.
	
	Next we define nearly holomorphic modular forms.
	For $\gamma \in G(\bbQ)$ and a finite-dimensional representation $(\sigma,V)$ of $K_\bbC^\circ$, we define the slash operator $|_\sigma \gamma \colon C^\infty(\frakH,V) \longrightarrow C^\infty(\frakH,V)$ by
	\[
	(f|_\sigma \gamma) (z) = J_\sigma(\gamma,z)^{-1} f(\gamma(z)), \qquad f \in C^\infty(\frakH,V), \,\, z \in \frakH.
	\]  
	For a discrete subgroup $\Gamma$ of $G_\infty^\circ$, we say that a slowly increasing function $f \in C^\infty(\frakH,V)$ is a $C^\infty$-modular form of weight $\sigma$ with respect to $\Gamma$ if $f|_\sigma \gamma = f$ for every $\gamma \in \Gamma$.
	We denote by $C^\infty(\frakH,\sigma)^\Gamma$ the space of all $C^\infty$-modular forms on $\frakH$ of weight $\sigma$ with respect to $\Gamma$.
	Since any Hermitian symmetric space is a K\"{a}hler manifold, we can define the notion of a nearly holomorphic function.
	For details, see \cite{86_shimura,87_shimura,94_shimura,00_shimura}.
	For a discrete subgroup $\Gamma$ of $G_\infty^\circ$ and a finite-dimensional representation $(\sigma,V)$ of $K_\infty^\circ$, we denote by $N_\sigma(\Gamma)$ (resp.~$M_\sigma(\Gamma)$) the subspace of all nearly holomorphic functions (resp.~holomorphic functions) in $C^\infty(\frakH,\sigma)^\Gamma$.

	Let $K_{\text{fin}}$ be an open compact subgroup of $G(\bbA_{\text{fin}})$.
	Let $g_1 , \ldots, g_h$ be a set of complete representatives of the double coset $G(\bbQ) \bs G(\bbA) / G_{\infty}^\circ K_\fini$.
	We may assume that the elements $g_1 , \ldots, g_h$ belong to $G(\bbA_\fini)$, since $G(\bbQ)$ is dense in $G(\bbR)$. 
	Let $\Gamma_i$ be the projection of $G(\bbQ) \cap g_i K_\fini G_\infty^\circ g_i^{-1}$ to $G_\infty^\circ$.
	We now obtain the isomorphism
	\[
	A_G^\infty G(\bbQ) \bs G(\bbA) /  K_\infty K_\fini \cong \coprod_{i=1}^h \Gamma_i \bs \frakH
	\]
	by the map 
	\[
	\gamma g_i g_\infty k \longmapsto g_\infty(\mathbf{i})
	\]
	for any $\gamma \in A_{G}^\infty G(\bbQ), g_\infty \in G_\infty$ and $k \in K_\fini$.
	Here $\mathbf{i}$ is the base point of $\frakH = G_\infty^1/K_\infty^\circ$ corresponding to $K_\infty^\circ$. 
	For a collection $\mathbf{f}=(f_1,\ldots,f_h)$ such that $f_i \in N_\sigma (\Gamma_i)$, we define a function $F_{\mathbf{f}}$ on $G(\bbA)$ by
	\begin{align}\label{lift_to_group}
	F_{\mathbf{f}}(g) = (f_i|_\sigma g_\infty) (\mathbf{i}), \qquad g = \gamma g_i g_\infty k, \,\, \gamma \in G(\bbQ), \,\, g_\infty\in G_\infty^\circ, \, \, k\in K_\fini.
	\end{align}
	Note that the $V$-valued function $F_{\mathbf{f}}$ is left $A_G^\infty G(\bbQ)$-invariant and right $K$-finite.
	Moreover we have
	\begin{align}\label{act_on_F_f}
	F_{\mathbf{f}}(gk_\infty) = \sigma(k_\infty)^{-1} F_{\mathbf{f}}(g)
	\end{align}
	for $g\in G(\bbA), k_\infty\in K_\infty^\circ$.
	Let $(\sigma^\vee,V^\vee)$ be the contragredient representation of $(\sigma,V)$.
	For $v^\vee \in V^\vee$, we have a scalar valued function $g \longmapsto \langle F_{\mathbf{f}}(g), v^\vee \rangle$.
	For simplicity of notation, we denote by $\langle F_{\mathbf{f}}, v^\vee \rangle$ the scalar valued function.
	For an automorphic form $\varphi$ and an finite-dimensional irreducible representation $\tau$ of $K_\infty^\circ$, we say that $\varphi$ has a $K_\infty^\circ$-type $\tau$ if the representation $\langle r(k) \varphi \mid k \in K_\infty^\circ\rangle_\bbC$ of $K_\infty^\circ$ is isomorphic to $\tau$ where $r$ is the right translation.
	Then the automorphic form $\langle F_{\mathbf{f}}, v^\vee \rangle$ has the $K_\infty^\circ$-type $\sigma$ by the formula (\ref{act_on_F_f}).
	By the definition of $F_\mathbf{f}$, the open compact group $K_\fini$ fixes the automorphic form $\langle F_{\mathbf{f}}, v^\vee \rangle$ under the right translation.
	We denote by $\calA(A \bs G)^{K_\fini}_\sigma$ the space of $K_\fini$-fixed functions with $K_\infty^\circ$-type $\sigma$.
	We then obtain the map
	\[
	\left(\bigoplus_{i=1}^h C^\infty(\frakH,\sigma)^{\Gamma_i}\right) \otimes V^\vee \longrightarrow \calA(A \bs G)^{K_\fini}_{\sigma^\vee} \colon \mathbf{f} \otimes v^\vee \longmapsto \langle F_{\mathbf{f}},v^\vee \rangle.
	\]
	
	Conversely, we construct a modular form on $\frakH$ from an automorphic form on $A_G^\infty \bs G(\bbA)$.
	Let $\varphi$ be an automorphic form in $\calA(A \bs G)^{K_\fini}_{\sigma^\vee}$.
	We fix an isomorphism $W = \langle k \cdot \varphi \mid k \in K_\infty^\circ\rangle_\bbC \cong \sigma$ as a $K_\infty^\circ$-representation.
	We then obtain a subspace $\mathrm{Ev}$ of $\Hom_\bbC(W,\bbC)$ spanned by elements $\mathrm{ev}_g$ for every $g \in G(\bbA)$.
	Here $\mathrm{ev}_g (\Phi) = \Phi(g)$ for any $\Phi \in W$.
	For $k \in K_\infty^\circ$ and $\mathrm{ev}_g \in \mathrm{Ev}$, put $k \cdot \mathrm{ev}_g = \mathrm{ev}_{g k^{-1}}$.
	Then the canonical pairing $\langle \, \cdot \, , \, \cdot \, \rangle$ on $\mathrm{Ev} \times W$ is $K_\infty^\circ$-invariant.
	Hence $\mathrm{Ev}$ is isomorphic to $(\sigma^\vee)^\vee \cong \sigma$ as a $K_\infty^\circ$-representation.
	Let $\iota^\vee$ be the fixed isomorphism $W \cong \sigma^\vee$ and $\iota$ an isomorphism from $\mathrm{Ev}$ to $\sigma$ such that $\langle \mathrm{ev}_g,\Phi \rangle = \langle \iota(\mathrm{ev}_g), \iota^\vee(\Phi) \rangle$ for every $\mathrm{ev}_g \in \mathrm{Ev}$ and $\Phi \in W$.
	Let $\{v_j\}$ be a basis of $V$ and $\{v_j^\vee\}$ its dual basis in $V^\vee$.
	We now describe $\mathrm{ev}_g \in \mathrm{Ev}$ in terms of $\varphi$.
	
	\begin{lem}
	With the above notation, we have
	\[
	(\dim V) \sum_{j=1}^{\dim V} \left( \int_{K_\infty^\circ} \varphi(g k^{-1}) \sigma(k^{-1})v_j \, dk \right) \otimes v_j^\vee = \iota(\mathrm{ev}_g) \otimes \iota^\vee(\varphi).
	\]
	\end{lem}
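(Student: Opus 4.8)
The claimed identity is an equality of two elements of $V\otimes V^\vee$, and the plan is to reduce it to a scalar identity by pairing and then to recognize the resulting integral as an instance of Schur orthogonality for the compact group $K_\infty^\circ$. Concretely, I would pair both sides against an arbitrary $u^\vee\in V^\vee$ in the first tensor factor and an arbitrary $v\in V$ in the second; functionals of the form $x\otimes y\mapsto\langle u^\vee,x\rangle\langle y,v\rangle$ separate points of $V\otimes V^\vee$, so it suffices to match the resulting scalars. On the left-hand side this pairing lets one carry out the sum over $j$: since $\{v_j\}$ and $\{v_j^\vee\}$ are dual bases, $\sum_j\langle v_j^\vee,v\rangle v_j=v$, whence
\[
(\dim V)\sum_j\Big\langle u^\vee,\int_{K_\infty^\circ}\varphi(gk^{-1})\,\sigma(k^{-1})v_j\,dk\Big\rangle\langle v_j^\vee,v\rangle=(\dim V)\int_{K_\infty^\circ}\varphi(gk^{-1})\,\langle u^\vee,\sigma(k^{-1})v\rangle\,dk .
\]
So the lemma reduces to the scalar identity
\[
(\dim V)\int_{K_\infty^\circ}\varphi(gk^{-1})\,\langle u^\vee,\sigma(k^{-1})v\rangle\,dk=\langle u^\vee,\iota(\mathrm{ev}_g)\rangle\,\langle\iota^\vee(\varphi),v\rangle,\qquad u^\vee\in V^\vee,\ v\in V,\ g\in G(\bbA),
\]
the Haar measure on $K_\infty^\circ$ being normalized to total mass one.

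\textbf{Rewriting the integrand.} Next I would express $\varphi(gk^{-1})$ through the evaluation functionals: $\varphi(gk^{-1})=\mathrm{ev}_{gk^{-1}}(\varphi)=\langle\mathrm{ev}_{gk^{-1}},\varphi\rangle$, and $\mathrm{ev}_{gk^{-1}}=k\cdot\mathrm{ev}_g$ by definition. Using that $\iota\colon\mathrm{Ev}\to\sigma$ is $K_\infty^\circ$-equivariant and that $\langle\mathrm{ev}_h,\Phi\rangle=\langle\iota(\mathrm{ev}_h),\iota^\vee(\Phi)\rangle$ for all $h$ and $\Phi\in W$, this gives
\[
\varphi(gk^{-1})=\langle\iota(\mathrm{ev}_{gk^{-1}}),\iota^\vee(\varphi)\rangle=\langle\sigma(k)\,\iota(\mathrm{ev}_g),\iota^\vee(\varphi)\rangle .
\]
Let $B\in\mathrm{End}(V)$ be the rank-one operator $B(x)=\langle u^\vee,x\rangle\,\iota(\mathrm{ev}_g)$, so that $\mathrm{tr}\,B=\langle u^\vee,\iota(\mathrm{ev}_g)\rangle$. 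A direct check (moving $\sigma(k^{-1})=\sigma(k)^{-1}$ across the pairing) shows that for each $k$,
\[
\varphi(gk^{-1})\,\langle u^\vee,\sigma(k^{-1})v\rangle=\bigl\langle\iota^\vee(\varphi),\ \sigma(k)\,B\,\sigma(k)^{-1}v\bigr\rangle .
\]

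\textbf{Schur orthogonality.} Because $\varphi\in\calA(A\bs G)^{K_\fini}_{\sigma^\vee}$ has $K_\infty^\circ$-type $\sigma^\vee$, the representation $\sigma$ of the compact group $K_\infty^\circ$ is irreducible, so Schur's lemma together with taking traces yields $\int_{K_\infty^\circ}\sigma(k)\,B\,\sigma(k)^{-1}\,dk=(\dim V)^{-1}(\mathrm{tr}\,B)\,\mathrm{id}_V$ for every $B\in\mathrm{End}(V)$. Integrating the last display over $K_\infty^\circ$ and pulling the operator integral out of the pairing therefore gives
\[
\int_{K_\infty^\circ}\varphi(gk^{-1})\,\langle u^\vee,\sigma(k^{-1})v\rangle\,dk=\frac{\mathrm{tr}\,B}{\dim V}\,\langle\iota^\vee(\varphi),v\rangle=\frac{1}{\dim V}\,\langle u^\vee,\iota(\mathrm{ev}_g)\rangle\,\langle\iota^\vee(\varphi),v\rangle ,
\]
and multiplying by $\dim V$ recovers exactly the scalar identity of the first step, which proves the lemma.

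\textbf{Expected obstacle.} There is essentially no analytic content; the whole argument is bookkeeping, and that is precisely where the care is needed: keeping the three spaces $V$, $V^\vee$ and $(V^\vee)^\vee\cong V$ apart, tracking which natural pairing is meant at each step, invoking the compatibility between $\iota$ and $\iota^\vee$ and the $K_\infty^\circ$-equivariance of each with the correct variance (the $k$ versus $k^{-1}$ conventions for the contragredient action), and fixing the Haar normalization so that the explicit factor $\dim V$ in the statement is exactly the reciprocal of the Schur constant.
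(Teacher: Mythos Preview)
Your proof is correct and follows essentially the same route as the paper's: rewrite $\varphi(gk^{-1})$ as the matrix coefficient $\langle\sigma(k)\iota(\mathrm{ev}_g),\iota^\vee(\varphi)\rangle$ using the $K_\infty^\circ$-equivariance of $\iota$, and then apply Schur orthogonality. The only cosmetic difference is that the paper pairs with the basis vectors $v_j^\vee$ and uses the matrix-coefficient form of Schur orthogonality directly, whereas you pair with arbitrary $u^\vee,v$ and package the same computation via the operator-averaging identity $\int_{K_\infty^\circ}\sigma(k)B\sigma(k)^{-1}\,dk=(\dim V)^{-1}(\mathrm{tr}\,B)\,\mathrm{id}_V$; these are equivalent formulations of the same orthogonality relation.
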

	\begin{proof}
	We have
	\begin{align*}
	\left\langle \int_{K_\infty^\circ} \varphi(gk^{-1}) \sigma(k^{-1}) v_i \, dk, v_j^\vee \right\rangle
	&= \int_{K_\infty^\circ} \varphi(gk^{-1}) \langle\sigma(k^{-1}) v_i, v_j^\vee \rangle\, dk\\
	&= \int_{K_\infty^\circ} \langle k \cdot \mathrm{ev}_g, \varphi\rangle \langle\sigma(k^{-1}) v_i, v_j^\vee \rangle\, dk \\
	&= \int_{K_\infty^\circ} \langle k \cdot \iota(\mathrm{ev}_g), \iota^\vee(\varphi)\rangle \langle\sigma(k^{-1}) v_i, v_j^\vee \rangle\, dk \\
	&= \frac{1}{\dim V} \langle v_i , \iota^\vee(\varphi) \rangle \langle \iota(\mathrm{ev}_g), v_j^\vee \rangle
	\end{align*}
	for any $j$ by the Schur orthogonality.
	Hence we obtain the equality
	\[
	\int_{K_\infty^\circ} \varphi(gk^{-1}) \sigma(k^{-1}) v_i \, dk = \frac{1}{\dim V} \langle v_i, \iota^\vee(\varphi) \rangle \iota(\mathrm{ev}_g).
	\]
	Then the sum
	\[
	(\dim V) \sum_{j=1}^{\dim V} \int_{K_\infty^\circ} \varphi(g k^{-1}) \sigma(k^{-1})v_j \, dk \otimes v_j^\vee
	\]
	is equal to
	\[
	\sum_{j=1}^{\dim V}\langle v_j, \iota^\vee(\varphi) \rangle \iota(\mathrm{ev}_g) \otimes v_j^\vee = \sum_{j=1}^{\dim V} \iota(\mathrm{ev}_g) \otimes \langle v_j, \iota^\vee(\varphi) \rangle v_j^\vee = \iota(\mathrm{ev}_g) \otimes \iota^\vee(\varphi).
	\]
	This completes the proof.
	\end{proof}
	 
	Consequently we obtain $V$-valued functions $f_{i,\varphi}^{(j)}$ on $\frakH$ for $i = 1,\ldots, h$ and $j = 1, \ldots, \dim V$ defined by
	\[
	f_{i,\varphi}^{(j)}(g_\infty(\mathbf{i})) = (\dim V) J_\sigma(g_\infty,\mathbf{i}) \int_{K_\infty^\circ} \varphi(g_i gk_\infty^{-1}) \sigma(k_\infty^{-1}) v_j \, dk_\infty.
	\]
	Then the function $f_{i,\varphi}^{(j)}$ is a $C^\infty$-modular form of weight $\sigma$ with respect to $\Gamma_i$.
	Hence we obtain the converse map
	\[
	\calA(A \bs G)^{K_\fini}_{\sigma^\vee} \xrightarrow{\, \sim \,} \left(\bigoplus_{i=1}^h C^\infty(\frakH,\sigma)^{\Gamma_i}\right) \otimes V^\vee
	\]
	defined by
	\[
	\varphi \longmapsto \sum_{j =1}^{\dim V} \left( f_{i,\varphi}^{(j)} \otimes v_j^\vee \right).
	\]
	To sum it up, we have the following:
	
	\begin{lem}
	Let $(\sigma,V)$ be an irreducible finite-dimensional representation of $K_\infty^\circ$.
	We regard $\sigma$ as the irreducible holomorphic representation of $K_\bbC^\circ$.
	Let $K_\fini$ be an open compact subgroup of $G(\bbA_\fini)$ and $g_1,\ldots,g_h \in G(\bbA_\fini)$ the representatives of $G(\bbQ) \bs G(\bbA) / G_\infty^\circ K_\fini$.
	Let $\Gamma_i$ be the congruence subgroup of $G_\infty^\circ$ corresponding to the double coset $G(\bbQ) g_i G_\infty^\circ K_\fini$.
	With the above notation, we obtain the isomorphism
	\[
	\left(\bigoplus_{i=1}^h C^\infty(\frakH,\sigma)^{\Gamma_i}\right) \otimes V^\vee \cong \calA(A \bs G)^{K_\fini}_{\sigma^\vee} \colon \mathbf{f} \otimes v^\vee \longmapsto \langle F_{\mathbf{f}},v^\vee \rangle.
	\]
	\end{lem}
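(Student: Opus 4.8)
Both maps in the statement have already been constructed in the discussion preceding it: the forward assignment $\mathbf{f}\otimes v^\vee\mapsto\langle F_{\mathbf{f}},v^\vee\rangle$ was shown there to be a well-defined $\bbC$-linear map into $\calA(A\bs G)^{K_\fini}_{\sigma^\vee}$ (it has the correct $K_\infty^\circ$-type and is $K_\fini$-fixed by (\ref{act_on_F_f}), slowly increasing by slow increase of the $f_i$ and boundedness properties of $J_\sigma$, $\calZ$-finite, and bilinear in $(\mathbf{f},v^\vee)$), and the converse assignment $\varphi\mapsto\sum_j f_{i,\varphi}^{(j)}\otimes v_j^\vee$ lands in $\bigoplus_i C^\infty(\frakH,\sigma)^{\Gamma_i}\otimes V^\vee$ because its component functions were already seen to be $C^\infty$-modular of weight $\sigma$ with respect to $\Gamma_i$. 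So the plan is just to check that these two maps are mutually inverse; the preceding lemma carries the essential computation.

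For the composite ``converse after forward'', I would start from $\mathbf{f}\otimes v^\vee$, set $\varphi=\langle F_{\mathbf{f}},v^\vee\rangle$, and evaluate the integral defining $f_{i,\varphi}^{(j)}$. Rewriting $\varphi(g_ig_\infty k^{-1})$ via (\ref{act_on_F_f}) as $\langle\sigma(k)F_{\mathbf{f}}(g_ig_\infty),v^\vee\rangle$ and $F_{\mathbf{f}}(g_ig_\infty)$ via (\ref{lift_to_group}) as $J_\sigma(g_\infty,\mathbf{i})^{-1}f_i(g_\infty(\mathbf{i}))$, the integrand over $K_\infty^\circ$ becomes a matrix coefficient of $\sigma$ times $\sigma(k^{-1})v_j$; Schur orthogonality then evaluates the integral, and after cancelling the two factors $J_\sigma(g_\infty,\mathbf{i})^{\pm1}$ and the prefactor $\dim V$ one gets $f_{i,\varphi}^{(j)}=\langle v_j,v^\vee\rangle\,f_i$. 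Hence $\sum_j f_{i,\varphi}^{(j)}\otimes v_j^\vee=f_i\otimes\sum_j\langle v_j,v^\vee\rangle v_j^\vee=f_i\otimes v^\vee$ for every $i$, so this composite is the identity.

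For the composite ``forward after converse'', I would start from $\varphi$, put $\mathbf{f}=(f_{i,\varphi}^{(\bullet)})$, and again use (\ref{lift_to_group}) so that the two factors $J_\sigma(g_\infty,\mathbf{i})^{\pm1}$ cancel, giving $F_{(f_{i,\varphi}^{(j)})}(g_ig_\infty)=(\dim V)\int_{K_\infty^\circ}\varphi(g_ig_\infty k^{-1})\sigma(k^{-1})v_j\,dk$; pairing with $v_j^\vee$ and summing over $j$, the preceding lemma gives $\sum_j\langle F_{(f_{i,\varphi}^{(j)})}(g_ig_\infty),v_j^\vee\rangle=\langle\iota(\mathrm{ev}_{g_ig_\infty}),\iota^\vee(\varphi)\rangle$, which equals $\langle\mathrm{ev}_{g_ig_\infty},\varphi\rangle=\varphi(g_ig_\infty)$ by the defining compatibility of $\iota$ and $\iota^\vee$ with the canonical pairing on $\mathrm{Ev}\times W$. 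Since both $\varphi$ and $\sum_j\langle F_{(f_{i,\varphi}^{(j)})},v_j^\vee\rangle$ are left $A_G^\infty G(\bbQ)$-invariant and right $K_\fini$-invariant, and $g_1,\dots,g_h$ represent $G(\bbQ)\bs G(\bbA)/G_\infty^\circ K_\fini$, agreement on the points $g_ig_\infty$ forces agreement on all of $G(\bbA)$; so this composite is the identity as well, and the two maps are inverse isomorphisms. I do not expect a conceptual obstacle, since all the analysis was done in the preceding lemma and in the constructions above; the one thing requiring genuine care is keeping the contragredient identifications consistent throughout—$W\cong\sigma$ versus $\sigma^\vee$, the pairing on $\mathrm{Ev}\times W$, and the recurring normalizing factors $\dim V$—so that the normalizations of the two maps line up precisely.
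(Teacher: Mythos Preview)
Your proposal is correct and matches the paper's approach: the paper simply constructs the forward map and the converse map in the paragraphs preceding the lemma and then states the lemma as a summary (``To sum it up, we have the following''), leaving the verification that they are mutually inverse implicit. Your argument makes that verification explicit, using exactly the ingredients the paper sets up---formula (\ref{act_on_F_f}), the definition (\ref{lift_to_group}), Schur orthogonality, and the preceding lemma identifying $(\dim V)\sum_j(\int\cdots)\otimes v_j^\vee$ with $\iota(\mathrm{ev}_g)\otimes\iota^\vee(\varphi)$---so there is nothing to add beyond the bookkeeping you already flag about keeping the $\sigma$ versus $\sigma^\vee$ identifications straight (note the paper's line ``$W\cong\sigma$'' is a typo for $\sigma^\vee$, as the next sentence confirms).
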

	
	Under certain assumptions of $G_\infty^\circ$, the above isomorphism induces the following isomorphism:
	
	\begin{align}\label{corr_MF_AF}
	\left(\bigoplus_{i=1}^h N_\sigma(\Gamma_i)\right) \otimes V^\vee \cong \calN(A \bs G)^{K_\fini}_{\sigma^\vee},
	\end{align}
	where $\calN(A \bs G)^{K_\fini}_{\sigma^\vee}$ is the subspace of all $\frakp_-$-finite functions in $\calA(A \bs G)^{K_\fini}_{\sigma^\vee}$.
	For details see section $5$ and $7$ in \cite{90_shimura}.
	In particular if $G=\Res_{F / \bbQ} \Sp_{2n}$ with a totally real number field $F$, the assumptions hold.

	\section{$G = \mathrm{Res}_{F/\bbQ}\SL_2$ case}
	In this section, we determine the structure of $\calE\calN(\Res_{F / \bbQ}\SL_2)$ as a $(\frakg,K_\infty)$-module.
	Throughout this section, let $F$ be a totally real number field over $\bbQ$ of degree $d$ and let $G=\Res_{F / \bbQ}\SL_2$.
	In this case, a maximal compact subgroup $K_\infty$ of $G(\bbR)$ is connected.
	
	\subsection{The category $\calO$}\label{BGG_cat}
	In this subsection, we recall some basic facts of the BGG category $\calO$.
	For details, see \cite{cat_o}.
	
	For simplicity we assume $F = \bbQ$.
	In this case, the Lie algebra $\frakg$ is equal to $\mathfrak{sl}_2(\bbC)$ with Cartan subalgebra $\frakk$.
	Let $\frakp = \frakk + \frakp_-$ be a Borel subalgebra of $\frakg$.
	In the rest of the paper, we choose the positive roots system corresponding to the Borel subalgebra $\frakp$.
	Then the BGG category $\calO$ is defined to be the full subcategory of the category of $\calU(\frakg)$-modules whose objects are the modules $M$ satisfying the following three conditions:
	\begin{itemize}
	\item $M$ is a finitely generated $\calU(\frakg)$-module.
	\item $M$ is $\frakk$-semisimple, that is, $M$ is a weight module: $M = \bigoplus_{\lambda \in \frakk^*} M_\lambda$.
	\item $M$ is locally $\frakp_-$-finite: for each $v \in M$, the subspace $\calU(\frakp_-) \cdot v$ of $M$ is finite-dimensional.
	\end{itemize}
	
	Let $M$ be a $\calU(\frakg)$-module.
	We say that $M$ is a highest weight module if there exists a maximal weight vector $v^+ \in M$ such that $M = \calU(\frakg)\cdot v $ (see \cite[\S 1.2]{cat_o}).
	Any $\lambda \in \frakk^*$ defines $\bbC_\lambda$ a one-dimensional $\frakp$-module $\bbC_\lambda$ with trivial $\frakp_-$-action.
	Now set 
	\[
	N(\lambda) = \calU(\frakg) \otimes_{\calU(\frakp)} \bbC_\lambda
	\]
	which has a natural structure of a left $\calU(\frakg)$-module.
	This is called a Verma module with highest weight $\lambda$.
	Then $N(\lambda)$ is the universal highest weight module of weight $\lambda$.
	The module $N(\lambda)$ has a unique irreducible quotient $L(\lambda)$.
	
	The center $\calZ$ of $\calU(\frakg)$ acts on $N(\lambda)$ as a character since $N(\lambda)$ has a unique highest weight vector.
	We denote by $\chi_\lambda$ the character of $\calZ$.
	Then any character of $\calZ$ is equal to $\chi_\lambda$ for some $\lambda \in \frakk^*$.
	As usual, $\frakk^*$ is identified with $\bbC$ and the unique positive root is identified with the integer $-2$.
	Then for $\lambda$ and $\mu \in \bbC$, the equation $\chi_\lambda = \chi_\mu$ holds if and only if $\lambda = \mu$ or $\lambda = 2 - \mu$.
	For $\lambda \in \bbC$, let $\calO_\lambda$ be the full subcategory of $\calO$ whose objects are all generalized $\chi_\lambda$-eigen modules.
	Then the modules $N(\lambda)$, $N(2-\lambda)$ and their irreducible quotients belong to $\calO_\lambda$.
	For a module $M$, let $M^\vee$ be the contragredient module of $M$.
	Then $N(\lambda)^\vee$ lies in $\calO_\lambda$.
	Since $\calO_\lambda$ has enough projectives, there exists a projective cover $\mathrm{pr} \colon P(\lambda) \longrightarrow L(\lambda)$.
	Here $\mathrm{pr}$ is an essential epimorphism, i.e., no proper submodule of the projective module $P(\lambda)$ is mapped onto $L(\lambda)$.
	Such a module is unique up to isomorphism.
	The classification of indecomposable modules in $\calO_\lambda$ is as follows:

	\begin{lem}[\S 3.12 in \cite{cat_o}]
	With the above notation, we have the following:
	\begin{enumerate}
	\item The Verma module $N(\lambda)$ is irreducible unless $\lambda \in \bbZ_{\leq 0}$.
	\item If $\lambda \not \in \bbZ$, every indecomposable module in $\calO_\lambda$ is isomorphic to $N(\lambda)$ or $N(2-\lambda)$.
	Here the Verma modules $N(\lambda)$ and $N(2-\lambda)$ are irreducible.
	\item If $\lambda = 1$, every indecomposable module in $\calO_\lambda$ isomorphic to $N(1)$.
	Here the Verma module $N(1)$ is irreducible.
	\item If $\lambda \in \bbZ_{> 1}$, every indecomposable module in $\calO_\lambda$ is isomorphic to one of the following five modules:
	\[
	N(\lambda), L(2-\lambda), N(2-\lambda), N(2-\lambda)^\vee, P(\lambda).
	\]
	Moreover we have the following exact sequences:
	\begin{align*}
	0 \longrightarrow N(\lambda) \longrightarrow N(2-\lambda) \longrightarrow L(2-\lambda) \longrightarrow 0,\\
	0 \longrightarrow N(\lambda) \longrightarrow P(\lambda) \longrightarrow N(2-\lambda) \longrightarrow 0.
	\end{align*}
	\end{enumerate}
	\end{lem}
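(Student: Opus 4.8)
The plan is to reduce everything to explicit computations in $\frakg=\mathfrak{sl}_2(\bbC)$, combined with the standard formalism of category $\calO$ (block decomposition, projective covers, BGG reciprocity, and the duality $M\mapsto M^\vee$). Fix a basis $\{X,H,Y\}$ of $\frakg$ with $X$ spanning $\frakp_-$, $H$ spanning $\frakk$ and $Y$ spanning $\frakp_+$, normalized so that $[H,X]=-2X$, $[H,Y]=2Y$ and $[X,Y]=H$. For $\lambda\in\bbC\cong\frakk^*$, the Verma module $N(\lambda)=\calU(\frakg)\otimes_{\calU(\frakp)}\bbC_\lambda$ has basis $v_n=Y^nv_\lambda$, $n\geq 0$, with $v_\lambda$ the highest weight vector, and a one-line induction gives $Hv_n=(\lambda+2n)v_n$ and $Xv_n=n(\lambda+n-1)v_{n-1}$. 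Hence $N(\lambda)$ has a proper submodule if and only if $n(\lambda+n-1)=0$ for some $n\geq 1$, i.e.\ if and only if $\lambda\in\bbZ_{\leq 0}$; this is (1). When $\lambda=1-n_0$ with $n_0\geq 1$, the vector $v_{n_0}$ is the unique primitive vector, it has weight $2-\lambda$, and $\{v_n:n\geq n_0\}$ is the unique proper submodule, isomorphic to $N(2-\lambda)$, with quotient the $(1-\lambda)$-dimensional simple module $L(\lambda)$. Re-indexing $\lambda\leftrightarrow 2-\lambda$ yields, for $\lambda\in\bbZ_{>1}$, the first exact sequence of (4), in which $N(\lambda)=L(\lambda)$ is simple and embeds in the reducible Verma $N(2-\lambda)$.

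Next I would invoke the block decomposition $\calO=\bigoplus_\chi\calO_\chi$: since $\chi_\lambda=\chi_\mu$ if and only if $\mu\in\{\lambda,2-\lambda\}$, the simple objects of $\calO_\lambda$ are $L(\lambda)$ and $L(2-\lambda)$. If $\lambda\notin\bbZ$, or $\lambda=1$ (so that $2-\lambda=\lambda$), then by the above every Verma in $\calO_\lambda$ is already simple, so $[N(\mu):L(\nu)]=\delta_{\mu\nu}$; BGG reciprocity $(P(\mu):N(\nu))=[N(\nu):L(\mu)]$ then forces $P(\mu)\cong N(\mu)$, so every simple object of $\calO_\lambda$ is projective and $\calO_\lambda$ is semisimple. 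Therefore every indecomposable in $\calO_\lambda$ is simple, which gives (2) and (3).

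Finally, let $\lambda\in\bbZ_{>1}$. The block $\calO_\lambda$ has two simples $L(\lambda)=N(\lambda)$ and $L(2-\lambda)$, and the first exact sequence of (4) shows $N(2-\lambda)$ has composition factors $L(\lambda)$ and $L(2-\lambda)$. Feeding this into BGG reciprocity gives the two indecomposable projectives: $P(2-\lambda)\cong N(2-\lambda)$, while $P(\lambda)$ carries a standard filtration with subquotients $N(\lambda)$ and $N(2-\lambda)$, each once; comparing with $\pr\colon P(\lambda)\twoheadrightarrow L(\lambda)$ (so that the head of $P(\lambda)$ is $L(\lambda)$) pins down which of the two is the submodule, producing the second exact sequence of (4) and showing $P(\lambda)$ is uniserial of length $3$ with composition series $L(\lambda),L(2-\lambda),L(\lambda)$. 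The real content — and the step I expect to be the main obstacle — is to show that these modules exhaust the indecomposables of $\calO_\lambda$. Here I would observe that both indecomposable projectives of $\calO_\lambda$ (hence, dually, both indecomposable injectives) are uniserial, so $\calO_\lambda$ is equivalent to the category of finite-dimensional modules over a connected Nakayama algebra, which has finite representation type with indecomposables exactly the quotients $P(\mu)/\mathrm{rad}^jP(\mu)$; enumerating these from $P(2-\lambda)$ and $P(\lambda)$ produces precisely $N(\lambda)=L(\lambda)$, $L(2-\lambda)$, $N(2-\lambda)$, the length-two module $P(\lambda)/\mathrm{soc}\,P(\lambda)\cong N(2-\lambda)^\vee$, and $P(\lambda)$ itself. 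The duality $M\mapsto M^\vee$ fixes the simples and $P(\lambda)$ and interchanges $N(2-\lambda)$ with $N(2-\lambda)^\vee$, which both certifies that $N(2-\lambda)^\vee$ is a genuinely new indecomposable and halves the $\Ext^1$-computations underlying the Nakayama claim; alternatively one can argue directly, bounding the Loewy length of any indecomposable in $\calO_\lambda$ by that of $P(\lambda)$ and running a short case analysis on its radical layers.
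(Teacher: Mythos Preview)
The paper does not prove this lemma at all; it is quoted from Humphreys \cite[\S 3.12]{cat_o}. Your argument is correct and is essentially the one given there: explicit $\mathfrak{sl}_2$-computations for (1) and the first sequence in (4), semisimplicity of the block via BGG reciprocity for (2) and (3), and the description of $P(\lambda)$ via its standard filtration together with a uniserial/Nakayama argument to exhaust the indecomposables in (4).

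One point deserves care. Your reasoning ``comparing with $\pr\colon P(\lambda)\twoheadrightarrow L(\lambda)$ pins down which of the two is the submodule'' correctly gives that $N(\lambda)=L(\lambda)$ is the top quotient in the standard filtration, hence
\[
0 \longrightarrow N(2-\lambda) \longrightarrow P(\lambda) \longrightarrow N(\lambda) \longrightarrow 0,
\]
which is \emph{not} the second exact sequence written in the paper. In fact the sequence $0\to N(\lambda)\to P(\lambda)\to N(2-\lambda)\to 0$ as printed cannot hold: every quotient of $P(\lambda)$ has head $L(\lambda)$, while $N(2-\lambda)$ has head $L(2-\lambda)$. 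The two genuine short exact sequences for $P(\lambda)$ are the one above and its dual
\[
0 \longrightarrow N(\lambda) \longrightarrow P(\lambda) \longrightarrow N(2-\lambda)^\vee \longrightarrow 0,
\]
and it is these that feed into your (correct) Nakayama enumeration of the five indecomposables. So your proof is fine; just do not claim it reproduces the second displayed sequence of (4) verbatim, since that display contains a misprint.
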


	For $\lambda \in \frakk^*$, we denote by $N^-(\lambda)$ the Verma module with respect to the Borel subalgebra $\frakp_+ + \frakk$.
	Let $L^-(\lambda)$ be its irreducible quotient.
	In this case, $N^-(\lambda)$ is irreducible unless $\lambda \in \bbZ_{\geq 0}$.
	Moreover, with the above notation, $N^-(\lambda)$ has the infinitesimal character $\chi_{-\lambda}$.
	For $\lambda^+ \in \bbZ_{\geq 1}$ and $\lambda^- \in \bbZ_{\leq -1}$, the module $N(\lambda^+)$ (resp.~$N^-(\lambda^-)$) is isomorphic to the (limit of) holomorphic discrete series representation (resp.~(limit of) anti-holomorphic discrete series representation) of $\SL_2(\bbR)$ of weight $\lambda^+$ (resp.~$\lambda^-$).

	For a general totally real field $F$, the Lie algebra $\frakg$ is isomorphic to the $d$-th product $\mathfrak{sl}_2(\bbC) \times \cdots \times \mathfrak{sl}_2(\bbC)$.
	Hence any irreducible module of $\frakg$ is an outer tensor product of certain irreducible modules of $\mathfrak{sl}_2(\bbC)$.
	If $F \neq \bbQ$, any $(\frakg,K_\infty)$-module, which occur in the space of nearly holomorphic automorphic forms on $G(\bbA)$, is semisimple (see Corollary \ref{str_th_inf}). 
	
	\subsection{Induced representations and theta correspondence}\label{ind_rep_theta_corr}
	Let $\bbA_F$ be the adele ring of $F$ and $\bbA_{F,\fini}$ the finite adele ring $F$.
	To simplify notation, we write $\bbA$ and $\bbA_{\fini}$ instead of the adele ring of $\bbQ$ and the finite adele ring $\bbA_{\bbQ,\fini}$, respectively.
	In order to state the main theorem, we recall the theta correspondence and the socle series of induced representations briefly.
	
	For a place $v$, take a two-dimensional quadratic space $(V_v,(\,,\,))$ over $F_v$.
	Let 
	\[
	\Delta(V_v) = - \det(V_v) \in F_v^\times/ F_v^{\times 2}
	\]
	be the discriminant of $V_v$ where $\det(V_v) = \det((x_i,x_j))$ for any basis $\{x_1, x_2\}$ of $V_v$.
	For $x\in F^\times_v$, set
	\[
	\chi_{V_v}(x) = (x,\Delta(V_v))_v,
	\]
	where $(\,\cdot\,,\,\cdot\,)_v$ is the Hilbert symbol of $F_v$.
	We define the Hasse invariant $\varepsilon_v$ of $V_v$ by
	\[
	\varepsilon_v(V_v) = (a_1,a_2)_{v},
	\]
	where  $\{x_1,x_2\}$ is a basis of $V$ such that $(x_i,x_j) = \delta_{i,j}a_i$.
	Note that for a place $v$, the isometry classes of two-dimensional quadratic spaces $V_v$ are determined by the quadratic character $\chi_{V_v}$ and the Hasse invariant $\varepsilon(V_v)$.
	Since the quadratic space $V_v$ is two-dimensional, we have $\varepsilon_v(V_v) = 1$ if $\chi_{V_v}=\mathbf{1}_v$.
	Here $\mathbf{1}_v$ is the trivial character. 	
	
	We take a non-trivial additive character $\psi = \otimes_v \psi_v$ as follows:
	If $F=\bbQ$, let
	\begin{align*}
	\psi_p(x) &= \exp(-2\pi\sqrt{-1}\, y), \qquad x \in \bbQ_p,  \\
	\psi_\infty(x) &= \exp(2\pi\sqrt{-1} \, x),\qquad x \in \bbR, 
	\end{align*}
	where $y \in \cup_{m=1}^\infty p^{-m}\bbZ$ such that $x-y \in \bbZ_p$.
	In general, for an archimedean place $v$ of $F$, put $\psi_v = \psi_\infty$ and for a non-archimedean place $v$ with the rational prime $p$ divisible by $v$, put $\psi_v(x) = \psi_p(\mathrm{Tr}_{F_v/\bbQ_p}(x))$.
	For a place $v$, we have a local Weil representation $\omega_{\psi_v}$ of $\SL_2(F_v) \times \mathrm{O}(V_v)$ on $\mathcal{S}(V_v)$, the space of Schwartz-Bruhat functions on $V_v$ (see \cite{1994_Kudla-Rallis}).
	Let $I_v(\chi_{V_v},s) =  \Ind_{B(F_v)}^{\SL_2(F_v)}(\chi_{V_v} |\cdot|^s)$ where $B=MN$ is the upper triangular subgroup of $\SL_2$.
	Here $M$ is the diagonal subgroup of $\SL_2$.
	We now obtain a $\SL_2(F_v)$-intertwining map 
	\begin{align}\label{theta_corr}
	\mathcal{S}(V_v) \longrightarrow I_v(\chi_{V_v},0)
	\end{align}
	defined by
	\[
	\varphi \longmapsto ( g \longmapsto \omega_v(g)\varphi(0)).
	\]
	Let $R(V_v)$ be the maximal quotient of $\calS(V_v)$ on which $\mathrm{O}(V_v)$ acts trivially, i.e., the space of $\mathrm{O}(V_v)$ co-invariants in $\mathcal{S}(V_v)$.
	Then, under the map (\ref{theta_corr}), the representation $R(V_v)$ may be identified with a subrepresentation of $I_v(\chi_{V_v},0)$.
	If $v$ is an archimedean place and the real quadratic space $V_v$ has signature $(2,0)$, the set of weights in the representation $R(V_v)$ is equal to $\{1+2m \mid m \in \bbZ_{\geq 0} \}$ by \cite[Proposition 2.1]{1990_Kudla-Rallis}.
	Hence, the representation $R(V_v)$ is isomorphic to the limit of holomorphic discrete series representation of $\SL_2(\bbR)$ of weight $1$.
	For an archimedean place $v$ and a real quadratic place $V_v$ with signature $(p,q)$, set $R(p,q) = R(V_v)$.
	
	If $V$ is a global two-dimensional quadratic space, set $R(V)=\bigotimes_v R(V_v)$.
	On the other hand, we choose a collection  $\calC=\{W_v\}_v$ of local two-dimensional quadratic space in such a way that
	\begin{itemize}
	\item $\varepsilon_v=1$ for almost all $v$,
	\item $\chi_{V_v} = \chi_{W_v}$ for all $v$,
	\end{itemize}
	then we may define a global automorphic representation $R(\calC) = \bigotimes_v R(W_v)$.
	If there is no global quadratic space with the collection $\calC$ as its completion, then we call the collection incoherent.
	For a quadratic Hecke character $\chi$, we then have the decomposition
	\[
	I(\chi,0) = \bigoplus_{V} R(V) \oplus \bigoplus_{\calC} R(\calC)
	\]
	where $V$ runs through all two-dimensional quadratic spaces over $F$ with the associated character $\chi_V = \chi$ and $\calC=\{W_v\}_v$ runs through all incoherent families such that the associated character $\otimes_v \chi_{W_v}$ is equal to $\chi$.
	The spaces $R(V)$ and $R(\calC)$ is characterized as eigenspaces of the intertwining operator (cf.~Remark \ref{eigensp_int_op}).
	Kudla and Rallis discuss the realization of the spaces $R(V)$ and $R(\calC)$.
	
	
	\begin{thm}[Theorem 3.1 in \cite{1994_Kudla-Rallis}]\label{KR}
	The following assertions hold.
	\begin{enumerate}
	\item[(1)] If a collection $\calC$ is incoherent, we have
	\[
	\Hom (R(\calC),\calA(G)) = 0.
	\]
	\item[(2)] For a two-dimensional quadratic space $V$ over $F$, we have
	\[
	\Hom (R(V),\calA(G)) \neq 0.
	\]
	Moreover a realization $R(V) \longrightarrow \calA(G)$ can be given by Eisenstein series.
	\end{enumerate}
	\end{thm}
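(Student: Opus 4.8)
The plan is to run the Siegel--Weil argument of Kudla and Rallis (which is what is cited). The preliminary step is to realize $R(V_v)$ and $R(\calC)$ as eigenspaces of the normalized intertwining operator: write $M^*(s)=\otimes_v M_v^*(s)$ for the normalized operator $I(\chi,s)\to I(\chi,-s)$, so that each $M_v^*(0)$ is an involution on $I_v(\chi_{V_v},0)$ acting on the constituent $R(V_v)$ by a scalar $\eta_v(V_v)\in\{\pm1\}$ (Remark \ref{eigensp_int_op}). For a collection $\calC=\{W_v\}_v$ the global operator $M^*(0)$ acts on the summand $R(\calC)\subset I(\chi,0)$ by $\prod_v\eta_v(W_v)$, and by the product formula for local Weil indices (equivalently, for Hasse invariants) this scalar equals $+1$ exactly when $\calC$ is the family of completions of a global quadratic space, i.e.\ is coherent; so $M^*(0)=+\mathrm{id}$ on the summand $R(V)$ and $M^*(0)=-\mathrm{id}$ on an incoherent $R(\calC)$. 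For a holomorphic section $\Phi(s)\in I(\chi,s)$ one forms the Siegel Eisenstein series $E(g,s;\Phi)$ on $G(\bbA)=\SL_2(\bbA_F)$; it is holomorphic at the centre $s=0$ of its functional equation $E(g,s;\Phi)=E(g,-s;M^*(s)\Phi)$, so the $G(\bbA)$-equivariant map $\Phi\mapsto E(\,\cdot\,,0;\Phi)$ from $I(\chi,0)$ to $\calA(G)$ satisfies $E(\,\cdot\,,0;\Phi)=E(\,\cdot\,,0;M^*(0)\Phi)$ and therefore annihilates every summand on which $M^*(0)=-\mathrm{id}$, in particular every incoherent $R(\calC)$.

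For assertion (2), given a two-dimensional quadratic space $V$ over $F$ I would form the theta kernel $\theta(g,h;\varphi)=\sum_{x\in V(F)}(\omega_{\psi}(g)\varphi)(h^{-1}x)$ for $\varphi\in\calS(V(\bbA))$ and integrate it over $\mathrm{O}(V)(F)\bs\mathrm{O}(V)(\bbA)$ — a convergent integral when $V$ is anisotropic, where the quotient is compact, and the regularized theta integral of Kudla--Rallis when $V$ is split. The (regularized) Siegel--Weil first-term identity then identifies the result with a nonzero constant times the leading term at $s=0$ of $E(g,s;\Phi_\varphi)$, where $\varphi\mapsto\Phi_\varphi(0)$ is the map (\ref{theta_corr}), whose image is the summand $R(V)\subset I(\chi_V,0)$. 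Choosing $\varphi$ whose theta integral does not vanish (e.g.\ a nonnegative $\varphi$ concentrated near a nonzero $F$-point of $V$) exhibits a nonzero element of $\Hom_{G(\bbA)}(R(V),\calA(G))$ realized by Eisenstein series, which is (2).

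For assertion (1), let $\calC$ be incoherent and let $T\in\Hom_{G(\bbA)}(R(\calC),\calA(G))$ be nonzero. Since $R(\calC)=\otimes_vR(W_v)$ is irreducible, $T$ is injective and its image $\Pi\subset\calA(G)$ is irreducible with $\Pi\cong R(\calC)$. By Langlands' decomposition of the space of automorphic forms on $\SL_2$ into cusp forms and Eisenstein contributions, $\Pi$ is cuspidal, residual, or occurs in the image of $E(\,\cdot\,,s_0;\,\cdot\,)$ for some point $s_0$ of holomorphy; matching infinitesimal characters forces $s_0=0$ (up to the functional equation), the residual case is impossible because $\Pi$ is infinite-dimensional while the residual spectrum of $\SL_2$ is one-dimensional, and the first paragraph shows $E(\,\cdot\,,0;\,\cdot\,)$ kills the summand $R(\calC)$. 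Hence $\Pi$ must be cuspidal, i.e.\ $R(\calC)$ would be a cuspidal automorphic representation of $\SL_2(\bbA_F)$. I expect the final step — excluding this — to be the \textbf{main obstacle}, and it is the one place where incoherence enters essentially: the local theta lifts of $R(W_v)$ to $\mathrm{O}(W_v)$ are nonzero at every place, so by the Rallis inner product formula a suitable choice of data makes the global theta lift of the hypothetical cusp form to the orthogonal side nonzero, which would produce a nonzero automorphic form on an orthogonal group whose completions form $\calC$, contradicting the nonexistence of such a global quadratic space. (Equivalently, this is the statement that cuspidal theta lifts from $\mathrm{O}(2)$ to $\SL_2$ arise only from global quadratic field extensions.) Therefore $T=0$, which is (1).
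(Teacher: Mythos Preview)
The paper does not prove this theorem; it is quoted from Kudla--Rallis, and the only supplementary content is Lemma~\ref{realization} for part~(2). That lemma is the one-line constant-term argument: once $M_0\Phi_0=\Phi_0$ on $R(V)$ is known, the constant term of $E(\,\cdot\,,0,\Phi)$ equals $\Phi_0+M_0\Phi_0=2\Phi_0\neq0$, so the Eisenstein map $\Phi_0\mapsto E(\,\cdot\,,0,\Phi)$ is nonzero on $R(V)$. Your first paragraph already contains exactly this (the ``$+1$-eigenspace'' statement is $M^*(0)\Phi_0=\Phi_0$), so your second paragraph's detour through theta integrals and the Siegel--Weil identity, while correct, is unnecessary for~(2). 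One caution: in the paper Remark~\ref{eigensp_int_op} is presented as a \emph{consequence} of Theorem~\ref{KR}, not an input, so if you want a self-contained proof you must derive the local eigenvalues and invoke the product formula for Hasse invariants directly (as you indicate) rather than cite the remark.

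For~(1) the paper offers no argument, so there is nothing to compare against. Your sketch is in the right direction but has a genuine gap at the Langlands-decomposition step: the trichotomy ``cuspidal, residual, or a value $E(\,\cdot\,,s_0;\,\cdot\,)$'' is not an accurate description of $\calA(G)_{(M_0,\chi)}$ in general --- one must also allow derivatives of Eisenstein series and more (cf.\ the appendix and \cite{FS}); that this does not occur here, at the centre of the functional equation with $\chi$ quadratic, requires justification and is essentially the content of Conjecture~\ref{conj}(1) in this case. You are right that the real work is excluding a cuspidal realization of $R(\calC)$, and the theta-lift/Rallis inner product mechanism you indicate is indeed the idea in \cite{1994_Kudla-Rallis}, but turning it into a proof needs the local Howe duality input and the inner product formula written out, which you have only gestured at.
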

	
	For the Theorem \ref{KR} (2), the realization is given by Lemma \ref{realization}. 
	
	Fix a generator $\varpi_v$ of the maximal ideal of the ring of integers $\calO_{F_v}$ of $F_v$. 
	For a non-archimedean place $v$ of $F$ and a character $\mu_v \colon F_v \longrightarrow \bbC^\times$, we say that $\mu_v$ is normalized if $\mu_v(\varpi_v)=1$.
	The following lemma is well-known:

	\begin{lem}[\cite{1992_Kudla-Rallis} for a non-archimedien case, Theorem 2.4 in \cite{Muic} for an archimedine case]\label{local}
	For a place $v$ of $F$, let $\mu_v$ be a unitary character of $F^\times_v$.
	Suppose $\mu_v$ is normalized if $v$ is non-archimedean and $\mu_v$ is $F_{v,+}^\times$-invariant if $v$ is archimedean.
	Here $F_{v,+}^\times$ is the identity component of $F_v^\times$.
	\begin{enumerate}
	\item If $v$ is an archimedean place, the induced representation $I_v(\mu_v,s)$ has a $\frakp_-$-finite vector if and only if $s \in 2 \bbZ_{\geq0}$ and $\mu_v = \mathrm{sgn}$ or $s \in -1+2\bbZ_{\geq 0}$ and $\mu_v = \mathbf{1}_v$.
	Moreover, we have
	\[
	0 \longrightarrow L(k) \oplus L^-(k) \longrightarrow I_v(\mathrm{sgn}^k,k-1) \longrightarrow F_k \longrightarrow 0, \qquad I_v(\mathrm{sgn},0) \cong R(2,0) \oplus R(0,2),
	\]
	\[
	0 \longrightarrow \bbC \longrightarrow I_v(\mathbf{1}_v,-1) \longrightarrow L(2) \oplus L^-(2) \longrightarrow 0,
	\]
	for $k \in \bbZ_{\geq1}$.
	Here, the representation $V_k$ is the finite dimensional representation of $\SL_2(\bbR)$ with $\dim F_k = k-1$ and $\bbC$ is the trivial representation.
	
	\item If $v$ is a non-archimedean place, the induced representation $I_v(\mu_v,s)$ is reducible if and only if either of the following conditions holds:
	\begin{itemize}
	\item[$\bullet$] $\mu_v^2=\mathbf{1}_v$, $\mu_v \neq 1$ and $s \in (\pi \sqrt{-1}/ \log q) \bbZ$.
	\item[$\bullet$] $\mu_v =\mathbf{1}_v$ and $s \in \{ \pm 1 + 2m\pi \sqrt{-1} / \log q  \mid m \in \bbZ\}\cup \{(2m+1)\pi \sqrt{-1} / \log q \mid m \in \bbZ\}$.
	\end{itemize} 
	Moreover, if $\mu_v^2=\mathbf{1}_v$ and $\mu_v \neq \mathbf{1}_v$, we have
	\[
	I_v(\mu_v,0) = R(V_v^1) \oplus R(V_v^{-1}),
	\]
	where $V_v^{\pm1}$ is a two-dimensional quadratic space over $F_v$ with $\chi_{V_v^{\pm1}} = \mu_v$ and $\varepsilon(V_v^a)=a$ for $a \in \{\pm1\}$.
	If $\mu_v =\mathbf{1}_v$, we have
	\[
	0 \longrightarrow \mathrm{St}_v \longrightarrow I_v(\mathbf{1}_v,1) \longrightarrow \bbC \longrightarrow 0,
	\]
	\[
	0 \longrightarrow \bbC \longrightarrow I_v(\mathbf{1}_v,-1) \longrightarrow \mathrm{St}_v \longrightarrow 0,
	\]
	where $\mathrm{St}_v$ is the Steinberg representation of $\SL_2(F_v)$.
	\end{enumerate}
	\end{lem}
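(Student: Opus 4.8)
The statement is purely local and each half essentially repackages a known result, \cite{1992_Kudla-Rallis} for $v$ non-archimedean and \cite[Theorem 2.4]{Muic} for $v$ archimedean; my plan is to assemble it from the structure theory of the normalized principal series $I_v(\mu_v,s)=\mathrm{Ind}_{B(F_v)}^{\SL_2(F_v)}(\mu_v|\cdot|^s)$, keeping careful track of the duality $I_v(\mu_v,s)^\vee\cong I_v(\mu_v^{-1},-s)$. First I would record the reducibility points. Passing to the standard intertwining operator $M_v\colon I_v(\mu_v,s)\to I_v(\mu_v^{-1},-s)$, the module $I_v(\mu_v,s)$ is irreducible exactly when the normalized $M_v$ is a nonzero scalar multiple of an isomorphism, and reducibility occurs at the poles and zeros of the normalizing factor, a ratio of local $L$-factors of $\mu_v$. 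For $v$ non-archimedean this yields that $I_v(\mu_v,s)$ is reducible if and only if $\mu_v|\cdot|^s$ equals $|\cdot|^{\pm1}$ or is a nontrivial quadratic character; using that $\mu_v$ is normalized and that $|\cdot|^s$ sends $\varpi_v$ to $q^{-s}$ turns this into the two bulleted conditions of (2). For $v$ archimedean the hypothesis that $\mu_v$ is $F_{v,+}^\times$-invariant forces $\mu_v\in\{\mathbf{1}_v,\mathrm{sgn}\}$, and the analogous analysis (or the classical computation with $\SO(2)$-types) shows $I_v(\mu_v,s)$ is reducible if and only if $s\in\bbZ$ with $s$ of parity opposite to $\mu_v$, that is, $s$ even when $\mu_v=\mathrm{sgn}$ and $s$ odd when $\mu_v=\mathbf{1}_v$.

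Next I would pin down the socle filtration at each reducible point. In the non-archimedean case, when $\mu_v$ is a nontrivial quadratic character, $I_v(\mu_v,0)=R(V_v^1)\oplus R(V_v^{-1})$ is precisely the local doubling/theta statement: the two summands are the images under the map (\ref{theta_corr}) of the Schwartz spaces of the two two-dimensional quadratic spaces with associated character $\mu_v$, they exhaust $I_v(\mu_v,0)$, and they are the $(\pm1)$-eigenspaces of the self-inverse normalized $M_v$, distinguished by their Hasse invariants; this is \cite{1992_Kudla-Rallis}. When $\mu_v=\mathbf{1}_v$, the constant function on $\SL_2(F_v)$ lies in $I_v(\mathbf{1}_v,-1)$ and spans the trivial subrepresentation, which gives $0\to\bbC\to I_v(\mathbf{1}_v,-1)\to\mathrm{St}_v\to 0$; dualizing gives the companion sequence, and the remaining reducibility points are periodic copies of the $s=\pm1$ situation together with the unramified-quadratic analogue of the quadratic case. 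For $v$ archimedean I would quote the classical composition series of the reducible principal series of $\SL_2(\bbR)$: for $\mu_v=\mathrm{sgn}^k$ and $s=k-1$ with $k\geq 2$ one has $0\to L(k)\oplus L^-(k)\to I_v(\mathrm{sgn}^k,k-1)\to F_k\to 0$ with $\dim F_k=k-1$, sub and quotient interchange when $s=-(k-1)$, and for $k=1$ the module degenerates to $L(1)\oplus L^-(1)\cong R(2,0)\oplus R(0,2)$ (using Section \ref{ind_rep_theta_corr}). The three displayed exact sequences are then the $k\geq 2$, the $k=1$ ($s=0$), and the $k=2$, $s=-1$ cases of this description.

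The subtlest part is the archimedean $\frakp_-$-finiteness criterion, which I would read off the filtrations just described. Since $\frakp_-$ lowers the $\SO(2)$-weight by $2$, an irreducible principal series has no nonzero $\frakp_-$-finite vector, and the criterion in (1) amounts to asking that $I_v(\mu_v,s)$ contain a submodule isomorphic to a (limit of) holomorphic discrete series or to the trivial representation --- exactly the modules of \emph{holomorphic type} that arise from nearly holomorphic forms. For $\mu_v=\mathrm{sgn}$ the (limit of) holomorphic discrete series $L(k)$ occurs as a submodule of $I_v(\mathrm{sgn}^k,s)$ precisely when $s=k-1\geq 0$, i.e.\ $s\in 2\bbZ_{\geq0}$; for $\mu_v=\mathbf{1}_v$, $L(k)$ occurs as a submodule when $s=k-1\geq 1$, and in addition $\bbC$ occurs as a submodule of $I_v(\mathbf{1}_v,-1)$, whereas for $s\leq -3$ the unique proper nonzero submodule is a nontrivial finite-dimensional representation, so the condition becomes $s\in-1+2\bbZ_{\geq0}$. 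The main obstacle I anticipate is not any single deep input but the bookkeeping: fixing the orientation of every socle filtration correctly (this is what produces the asymmetry between positive and negative $s$) and matching the normalizations of $I_v(\mu_v,s)$, of the intertwining operators, and of the theta maps across the cited references.
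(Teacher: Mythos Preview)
The paper does not actually prove this lemma: it is introduced as ``well-known'' and attributed wholesale to \cite{1992_Kudla-Rallis} and \cite[Theorem~2.4]{Muic}. Your sketch of the reducibility points, of the Steinberg/trivial and quadratic-character filtrations in the non-archimedean case, and of the archimedean composition series is the standard route and is correct as far as it goes, so there is nothing to compare on that front.

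There is, however, a genuine gap in your derivation of the $\frakp_-$-finiteness criterion in part~(1). You write that the criterion ``amounts to asking that $I_v(\mu_v,s)$ contain a submodule isomorphic to a (limit of) holomorphic discrete series or to the trivial representation,'' and then, from the fact that for $s\le -3$ (with $\mu_v=\mathbf{1}_v$) the unique proper submodule is a \emph{nontrivial} finite-dimensional representation, you conclude that those $s$ are excluded. That inference is false: every finite-dimensional $(\frakg,K_\infty)$-module lies in category $\calO$ and is therefore locally $\frakp_-$-finite. Your own socle description shows that $F_k$ sits as a submodule of $I_v(\mathrm{sgn}^k,-(k-1))$ for $k\ge 3$, with $K$-types $2-k,4-k,\ldots,k-2$; the bottom weight vector is annihilated by $\frakp_-$, so $I_v(\mu_v,s)$ certainly has nonzero $\frakp_-$-finite vectors at $s=-(k-1)$ as well. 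In other words, the ``only if'' direction as literally stated does not follow from your argument (and, read literally, is not even true). What the paper actually needs downstream---see the discussion after Lemma~\ref{const_term_Eis} and the use of Corollary~\ref{str_th_inf}---is only the list of parameters at which the $\frakp_-$-finite part of $I_v(\mu_v,s)$ contains an $L(k)$ or the trivial representation, since nearly holomorphic automorphic forms generate precisely those $(\frakg,K_\infty)$-modules. If you want a clean local statement, either prove the weaker assertion ``$I_v(\mu_v,s)$ contains $L(k)$ for some $k\ge 1$, or $\bbC$, as a submodule if and only if $s$ lies in the stated set,'' or note separately that the nontrivial finite-dimensional submodules at $s\le -2$ never arise from constant terms of nearly holomorphic forms.
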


	\subsection{The space of nearly holomorphic Hilbert modular forms}
	In this subsection we discuss basic properties of nearly holomorphic Hilbert modular forms.
	
	For a finite-dimensional representation $\sigma$ of $K_\infty$ and a congruence subgroup $\Gamma$ of $\SL_2(F)$, let $N_\sigma(\Gamma)$ (resp.~$M_\sigma(\Gamma)$) be the space of nearly holomorphic Hilbert modular forms (resp.~holomorphic Hilbert modular forms) of weight $\sigma$ (cf.~section \ref{def_NHMF}).
	Since the compact connected Lie group $K_\infty$ is isomorphic to $\SO(2,\bbR)^d$, irreducible representations of $K_\infty$ are parametrized by $d$-tuple integers $\mathbf{k} = (k_1, \ldots, k_d) \in \bbZ^d$.
	If the representation $\sigma$ corresponds to the integers $\mathbf{k} \in \bbZ^d$, let us denote $N_\mathbf{k}(\Gamma) =  N_\sigma(\Gamma)$ and $M_\mathbf{k}(\Gamma) = M_\sigma(\Gamma)$.
	For a holomorphic Hilbert modular form $f$ of weight $\mathbf{k} = (k_1,\ldots,k_d)$, we have $k_1 = \dots = k_d$ unless $f$ is a cusp form by \cite[Chap.~1, Remark 4.8]{Freitag}.
	
	For an integer $k$, we define differential operators $R_k$ and $L_k$ on $\frakH_1=\{z \in \bbC \mid \mathrm{Im}(z)>0\}$ by
	\[
	R_k= \frac{k}{y} + 2 \sqrt{-1}\,\frac{\partial}{\partial z} , \qquad  L_k=-2 \sqrt{-1}\, y^2\frac{\partial}{\partial \overline{z}}.
	\]
	Let $\bfa = \{\infty_1,\ldots,\infty_d\}$ be the set of archimedean places of $F$.
	We regard $\bfa$ as the set of embeddings of $F$ into $\bbR$.
	We have $G(\bbR) = \prod_{v \in \bfa} \SL_2(F_v)$.
	We denote by $\frakH_v $ the homogeneous space $\SL_2(F_v)/K_v$.
	The Hermitian symmetric space $\frakH_v$ is the complex upper half plane $\frakH_1$ and the maximal compact subgroup $K_v$ is the stabilizer of $\sqrt{-1} \in \frakH_v$ under the linear fractional transformation.
	We define differential operators $R_{k,j}$ and $L_{k,j}$ on $\frakH_{\infty_j}$ similarly to the above differential operators $R_k$ and $L_k$.
	For $1 \leq i \leq d$, set $e_i = (0, \ldots, 0, 1, 0, \ldots, 0)$.
	Then the spaces $R_{k,j}(N_{\mathbf{k}}(\Gamma))$ and $L_{k,j}(N_{\mathbf{k}}(\Gamma))$ are contained in $N_{\mathbf{k}+2e_j}(\Gamma)$ and $N_{\mathbf{k}-2e_j}(\Gamma)$, respectively.
	For a non-negative integer $\ell$, set 
	\[
	R_{k,j}^{(\ell)} = R_{k+2\ell -2,j} \circ \cdots \circ R_{k+2,j} \circ R_{k,j}, \qquad L_{k,j}^{(\ell)}= L_{k-2\ell +2,j} \circ \cdots \circ L_{k-2,j} \circ L_{k,j}.
	\]
	We say that a differential operator $D$ on $\frakH = \prod_{v \in \bfa} \frakH_v$ is a Maass-Shimura differential operator if $D$ is a composition of the above operator $R_{k,j}^{(\ell)}$ for any $j$.
	When $F=\bbQ$, let $E_2$ be the weight two Eisenstein series with the Fourier expansion
	\begin{align}\label{E_2}
	E_2(x+\sqrt{-1}\,y)=\frac{3}{\pi y} - 1 + 24 \sum_{n=1}^\infty \left(\sum_{0 < d | n}d \right) \exp(2\pi \sqrt{-1} \, n(x+\sqrt{-1}\,y)), \qquad x+\sqrt{-1}\, y \in \frakH_1.
	\end{align}
	The following statement has been proved by Shimura \cite[Theorem 5.2]{87_shimura}:
	
	\begin{thm}\label{NHMF_str}
	Take a nearly holomorphic Hilbert modular form $f \in N_{\mathbf{k}}(\Gamma)$.
	Then there exist holomorphic Hilbert modular forms $f_1, \ldots,f_\ell$ and Maass-Shimura differential operators $D_0,\ldots D_\ell$ such that
	\[
	f = 
	\begin{dcases}
	\sum_{i=1}^\ell D_i f_i + D_0E_2 & \text{if $F=\bbQ$}, \\
	\sum_{i=1}^\ell D_i f_i & \text{if $F \neq \bbQ$}.
	\end{dcases}
	\]
	\end{thm}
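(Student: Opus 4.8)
The plan is to argue by induction on the \emph{depth} of $f$, in the spirit of Shimura. The starting point is the coordinate description of near holomorphy: a smooth function on $\frakH=\prod_{v\in\bfa}\frakH_v$ is $\frakp_-$-finite of weight $\mathbf{k}$ for $\Gamma$ exactly when it is a finite sum
\[
f\;=\;\sum_{\mathbf{r}}g_{\mathbf{r}}(z)\prod_{j=1}^{d}y_j^{-r_j},
\]
with each $g_{\mathbf{r}}$ holomorphic on $\frakH$ and the multi-indices $\mathbf{r}=(r_1,\dots,r_d)$ running over $0\le r_j\le p_j$ for some minimal $\mathbf{p}=(p_1,\dots,p_d)$ (here $y_j=\mathrm{Im}(z_j)$). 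I call $|\mathbf{p}|=p_1+\dots+p_d$ the depth and induct on it; the base case $\mathbf{p}=\mathbf{0}$ is the holomorphic case, where one takes $\ell=1$, $f_1=f$, $D_1=\mathrm{id}$ and $D_0=0$.

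For the inductive step the key point is the transformation law of the top coefficient $g_{\mathbf{p}}$. Using $\mathrm{Im}(\gamma z_j)=y_j|c_jz_j+d_j|^{-2}$ and comparing, on the two sides of $f|_{\mathbf{k}}\gamma=f$, the terms of top degree in the $y_j^{-1}$ (i.e.\ the coefficient of $\prod_j y_j^{-p_j}$), one finds that $g_{\mathbf{p}}$ is holomorphic and $\Gamma$-modular of weight $\mathbf{k}-2\mathbf{p}=(k_1-2p_1,\dots,k_d-2p_d)$; since $f$ is slowly increasing so is $g_{\mathbf{p}}$, whence (by the Koecher principle when $d\ge2$, and directly from the growth condition at the cusp when $d=1$) $g_{\mathbf{p}}\in M_{\mathbf{k}-2\mathbf{p}}(\Gamma)$. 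If every component of $\mathbf{k}-2\mathbf{p}$ is $\ge1$, I would apply the Maass--Shimura operator $D=\prod_{j=1}^{d}R_{k_j-2p_j,\,j}^{(p_j)}$: a short calculation shows $Dg_{\mathbf{p}}$ is nearly holomorphic of weight $\mathbf{k}$ and depth $\mathbf{p}$ with coefficient of $\prod_j y_j^{-p_j}$ equal to $c\,g_{\mathbf{p}}$, where $c=\prod_{j}(k_j-2p_j)(k_j-2p_j+1)\cdots(k_j-p_j-1)\neq0$. Replacing $f$ by $f-c^{-1}Dg_{\mathbf{p}}$ strictly decreases $|\mathbf{p}|$, so the induction hypothesis applies.

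It remains to treat the boundary weights, where some component of $\mathbf{k}-2\mathbf{p}$ is $\le0$. A negative component is impossible, since then $M_{\mathbf{k}-2\mathbf{p}}(\Gamma)=0$ while $g_{\mathbf{p}}\neq0$. If some component is $0$ but $\mathbf{k}-2\mathbf{p}\neq(0,\dots,0)$, then $\mathbf{k}-2\mathbf{p}$ is not parallel, so $g_{\mathbf{p}}$ is a cusp form by \cite[Chap.~1, Remark 4.8]{Freitag}; and a Hilbert cusp form one of whose weight components vanishes is identically zero (writing $j_0$ for the index with zero weight, $|g_{\mathbf{p}}|^2\prod_j y_j^{k_j-2p_j}$ is $\Gamma$-invariant and subharmonic in $z_{j_0}$, forcing $g_{\mathbf{p}}$ to be independent of $z_{j_0}$ and hence, by cuspidality, zero) --- contradiction. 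The only surviving case is $\mathbf{k}-2\mathbf{p}=(0,\dots,0)$, i.e.\ $\mathbf{k}=2\mathbf{p}$, with $g_{\mathbf{p}}$ a nonzero constant. For $F=\bbQ$ (so $d=1$, $k=2p$) this genuinely occurs, and the remedy is to subtract a scalar multiple of $R_2^{(p-1)}E_2$: since the $y^{-1}$-coefficient of $E_2$ is the constant $3/\pi$, the form $R_2^{(p-1)}E_2$ is nearly holomorphic of weight $2p=k$ and depth $p$ with $y^{-p}$-coefficient $(p-1)!\,(3/\pi)\neq0$, so a suitable multiple of it matches the top coefficient of $f$ and lowers the depth; collecting these corrections into a single $D_0E_2$ gives the stated formula. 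For $F\neq\bbQ$ this case must instead be excluded: over a totally real field of degree $\ge2$ the parallel weight-$\underline{2}$ Hilbert Eisenstein series converges and is therefore genuinely holomorphic (Hecke), which removes the quasi-modular obstruction responsible for $E_2$ over $\bbQ$, and one checks accordingly that no nearly holomorphic form of weight $2\mathbf{p}$ has constant top coefficient. Hence for $F\neq\bbQ$ only the cases already handled occur, and the induction closes with $D_0=0$.

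The main obstacle is precisely this boundary bookkeeping: everything rests on the vanishing statements for (nearly) holomorphic Hilbert modular forms whose weight has a nonpositive component, and on isolating the constant-top-coefficient phenomenon as the \emph{only} obstruction to writing a nearly holomorphic form as a sum of Maass--Shimura derivatives of holomorphic forms --- an obstruction measured over $\bbQ$ by the non-holomorphy of $E_2$ and absent over fields of degree $\ge2$. (One may also argue representation-theoretically: the $(\frakg,K_\infty)$-module generated by $f$ inside $\calA(\Gamma)$ is finitely generated, $\frakk$-semisimple and locally $\frakp_-$-finite, hence lies in the category $\calO$ of \S\ref{BGG_cat}; its indecomposable constituents are then Verma modules, their contragredients, or projective covers, and each such module that actually occurs in the space of nearly holomorphic automorphic forms is generated over $\calU(\frakp_+)$ by its $\frakp_-$-annihilated --- i.e.\ holomorphic --- vectors, the sole exception being the contragredient Verma $N(0)^\vee$, which occurs only for $F=\bbQ$ and is generated by $E_2$ together with the constants. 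Establishing that this list of occurring modules is as claimed --- in particular that for $F\neq\bbQ$ the occurring modules are semisimple --- is the substantive input, and it is what the cited theorem of Shimura provides.)
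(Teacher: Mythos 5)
The paper does not prove this statement; it is quoted from Shimura \cite[Theorem 5.2]{87_shimura}, so your sketch has to be judged as a free-standing argument. Your overall strategy (write $f=\sum_{\mathbf r}g_{\mathbf r}\prod_j y_j^{-r_j}$, observe that top coefficients are modular of weight $\mathbf k-2\mathbf r$, and peel them off with Maass--Shimura operators) is indeed Shimura's, and for $F=\bbQ$ your argument is essentially complete: the only boundary case is $k=2p$ with constant top coefficient, and $R_2^{(p-1)}E_2$ disposes of it. Two problems remain for $d\ge 2$.

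First, a repairable bookkeeping flaw: you induct on $|\mathbf p|$ where $\mathbf p$ is the componentwise maximum of the occurring multi-indices, but then you only subtract off the single coefficient $g_{\mathbf p}$. That coefficient can vanish while $\mathbf p\neq\mathbf 0$ (e.g.\ $f=g_{(1,0)}y_1^{-1}+g_{(0,1)}y_2^{-1}+g_{(0,0)}$ has $\mathbf p=(1,1)$ and $g_{(1,1)}=0$), and even when $g_{\mathbf p}\neq 0$ killing it need not decrease $|\mathbf p|$. The standard fix is to induct on the maximal total degree $N=\max|\mathbf r|$ and to subtract, in one step, $\sum_{|\mathbf r|=N}c_{\mathbf r}^{-1}D_{\mathbf r}g_{\mathbf r}$; each $g_{\mathbf r}$ with $|\mathbf r|=N$ is modular of weight $\mathbf k-2\mathbf r$ by the same comparison of leading terms. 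Second, and more seriously: the case $\mathbf k=2\mathbf p$ with $g_{\mathbf p}$ a nonzero constant for $F\neq\bbQ$ is exactly the content of the theorem --- it is where the $E_2$ exception lives --- and you dismiss it by remarking that the parallel weight-$\underline 2$ Eisenstein series is holomorphic over fields of degree $\ge 2$ and that ``one checks accordingly'' that the case cannot occur. The holomorphy of that one Eisenstein series does not preclude the existence of some other nearly holomorphic form of weight $2\mathbf p$ with constant top coefficient, and your heuristic says nothing at all about non-parallel $2\mathbf p$ (e.g.\ $\mathbf p=(1,0,\dots,0)$, $\mathbf k=(2,0,\dots,0)$). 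Your own parenthetical concedes the point: the statement that the only ``obstruction module'' occurring in $\calN(\Gamma)$ is the one generated by $E_2$ over $\bbQ$ ``is what the cited theorem of Shimura provides'' --- i.e.\ you are invoking the theorem you are supposed to be proving. To close the gap you would need a genuine vanishing argument for this case (Shimura's proof, or a $(\frakg,K_\infty)$-module analysis showing that a weight-$\mathbf 0$ constant cannot arise as $\prod_jY_j^{p_j}\varphi$ for $\varphi\in\calN(\Gamma)$ when $d\ge 2$), and nothing in the sketch supplies it.
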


	If $F=\bbQ$, the following discussions can be found in \cite{pss1}.
	For the rest of this subsection, we assume $F \neq \bbQ$.
	For a congruence subgroup $\Gamma$ and an infinitesimal character $\chi_\lambda$ (see \S \ref{BGG_cat}), let $\calN(\Gamma,\chi_\lambda)^{\mathrm{gen}}$ be the generalized $\chi_\lambda$-eigenspace of nearly holomorphic automorphic forms on $\Gamma \bs G(\bbR)$ and $\calN(\Gamma,\chi_\lambda)$ the subspace of all $\chi_\lambda$-eigenforms in $\calN(\Gamma,\chi_\lambda)^{\mathrm{gen}}$.
	By $K_\infty$-finiteness of automorphic form, a weight in $\calN(\Gamma,\chi_\lambda)^{\mathrm{gen}}$ is integral.
	Since any module in $\calO$ with integral highest weight has integral infinitesimal character, the space $\calN(\Gamma,\chi_\lambda)^{\mathrm{gen}}$ is zero unless the infinitesimal character $\chi_\lambda$ is integral, i.e., $\lambda \in \bbZ^n$.
	For $\lambda = (\lambda_1, \dots, \lambda_d) \in \bbZ^d$, we denote by $L(\lambda)$ the irreducible highest weight module $L(\lambda_1) \boxtimes \cdots \boxtimes L(\lambda_d)$ of $\frakg \cong \mathfrak{sl}_2(F_v) \otimes_\bbR \bbC$.
	For an integral infinitesimal character $\chi_\lambda$, we may assume $\lambda \in \bbZ^d_{\geq 1}$.

	\begin{cor}\label{str_th_inf}
	Suppose $\lambda \in \bbZ_{\geq 1}^d$ and $F \neq \bbQ$.
	As a $(\frakg,K_\infty)$-module, we have
	\[
	\calN(\Gamma,\chi_\lambda)^{\mathrm{gen}} = \calN(\Gamma,\chi_\lambda) \cong 
	\begin{dcases}
	L(\lambda)^{\bigoplus m(\Gamma,\lambda)} &\text{if $\lambda \neq (2,\ldots,2)$}, \\
	L(\lambda)^{\bigoplus m(\Gamma,\lambda)} \oplus \bbC &\text{if $\lambda=(2,\ldots,2)$}.
	\end{dcases}
	\]
	Here the multiplicity $m(\Gamma,\lambda)$ is the dimension of $M_\lambda(\Gamma)$.
	\end{cor}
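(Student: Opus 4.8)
The plan is to deduce the statement from the structure theorem for nearly holomorphic Hilbert modular forms (Theorem~\ref{NHMF_str}) together with the correspondence (\ref{corr_MF_AF}) between nearly holomorphic modular forms on $\frakH$ and nearly holomorphic automorphic forms on $\Gamma\bs G(\bbR)$, reducing everything to the $\mathfrak{sl}_2(\bbC)$-facts of \S\ref{BGG_cat} applied in each of the $d$ factors. First I would record the representation-theoretic content of the Maass--Shimura operators: under (\ref{corr_MF_AF}) the operator $R_{k,j}^{(\ell)}$ corresponds, up to a nonzero scalar, to the action of an element of $\calU(\frakp_+)$ lying in the $j$-th $\mathfrak{sl}_2(\bbC)$-factor of $\frakg$ (Shimura, cf.~\cite[\S 7]{90_shimura}). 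Hence if $f$ is a nonzero holomorphic Hilbert modular form of weight $\mathbf m=(m_1,\dots,m_d)$ and $\varphi_f$ is its lift, then $\varphi_f$ is a weight-$\mathbf m$ vector annihilated by $\frakp_-$, so $\calU(\frakg)\varphi_f$ is a nonzero quotient of the Verma module $N(\mathbf m)=N(m_1)\boxtimes\cdots\boxtimes N(m_d)$, and Theorem~\ref{NHMF_str} (there is no $E_2$-term, since $F\neq\bbQ$) says that $\calN(\Gamma)$ is the sum $\sum_f\calU(\frakg)\varphi_f$ over all holomorphic Hilbert modular forms $f$ with respect to $\Gamma$.

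Next I would determine which weights $\mathbf m$ actually occur. By \cite[Chap.~1, Remark 4.8]{Freitag} a nonzero holomorphic Hilbert modular form of non-parallel weight is a cusp form; if such a weight had a non-positive entry, then, restricted to that variable with the others fixed, the form would be a bounded holomorphic function invariant under a dense subgroup of $\SL_2(\bbR)$, hence constant in that variable, hence, being cuspidal, identically zero. Thus a nonzero holomorphic Hilbert modular form has weight either $\mathbf m=\ul{0}$ (a constant) or $\mathbf m\in\bbZ^d_{\geq1}$; in the latter case $N(\mathbf m)$ is irreducible (\S\ref{BGG_cat}), so $\calU(\frakg)\varphi_f\cong L(\mathbf m)$ is simple, and in the former case $\calU(\frakg)\varphi_f$ is the trivial module $\bbC=L(\ul{0})$. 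Decomposing $\calN(\Gamma)=\bigoplus_\chi\calN(\Gamma,\chi)^{\mathrm{gen}}$, and using that $\calU(\frakg)\varphi_f$ has the single infinitesimal character $\chi_{\mathbf m}$, we see that $\calN(\Gamma,\chi_\lambda)^{\mathrm{gen}}$ is the sum of those $\calU(\frakg)\varphi_f$ with $\chi_{\mathbf m}=\chi_\lambda$. Since $\chi_{\mathbf m}=\chi_\lambda$ forces $m_v\in\{\lambda_v,2-\lambda_v\}$ for every $v$, while $\lambda\in\bbZ^d_{\geq1}$ and $\mathbf m\in\{\ul{0}\}\cup\bbZ^d_{\geq1}$, a short case check gives $\mathbf m=\lambda$, with the single exception $\mathbf m=\ul{0}$ occurring exactly when $\lambda=\ul{2}$ (note $\chi_{\ul{0}}=\chi_{\ul{2}}$).

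Consequently $\calN(\Gamma,\chi_\lambda)^{\mathrm{gen}}=\sum_{f\in M_\lambda(\Gamma)}\calU(\frakg)\varphi_f$, together with the trivial submodule $\bbC$ when $\lambda=\ul{2}$. This is a sum of simple $(\frakg,K_\infty)$-submodules, all isomorphic to $L(\lambda)$, plus (when $\lambda=\ul{2}$) the trivial module, whose weight $\ul{0}$ does not occur in any $L(\ul{2})$-summand; hence the space is semisimple. In particular $\calZ$ acts semisimply, so the generalized eigenspace equals the eigenspace, and the module is isomorphic to $L(\lambda)^{\oplus r}$ (plus $\bbC$ if $\lambda=\ul{2}$), where $r$ is the dimension of its weight-$\lambda$ subspace. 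That subspace is spanned by the linearly independent vectors $\varphi_f$, $f\in M_\lambda(\Gamma)$ (each $\calU(\frakg)\varphi_f\cong L(\lambda)$ contributes the line $\bbC\varphi_f$ in weight $\lambda$, and $\bbC$ contributes nothing there), so $r=\dim M_\lambda(\Gamma)=m(\Gamma,\lambda)$. The only non-formal step is the vanishing of holomorphic Hilbert modular forms whose weight has a non-positive but not all-zero entry — this is where $F\neq\bbQ$ and total reality are used, via Freitag's dichotomy; granted that, the remainder is bookkeeping with the $\mathfrak{sl}_2(\bbC)$-Verma modules of \S\ref{BGG_cat}.
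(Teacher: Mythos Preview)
Your proof follows the same route as the paper's: Theorem~\ref{NHMF_str} (with no $E_2$ term since $F\neq\bbQ$) shows that $\calN(\Gamma)$ is the $(\frakg,K_\infty)$-module generated by lifts of holomorphic forms, Freitag's results force the weight of a nonzero holomorphic form to lie in $\bbZ^d_{\geq1}\cup\{\ul0\}$, and the $\mathfrak{sl}_2$-facts of \S\ref{BGG_cat} then give the claimed decomposition and the multiplicity; your treatment of the multiplicity is in fact more explicit than the paper's.

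One step does not work as written. Your argument for excluding weights with a non-positive entry (``restricted to that variable with the others fixed, the form would be \ldots invariant under a dense subgroup of $\SL_2(\bbR)$'') fails: fixing all but one coordinate yields no invariance at all, since a nontrivial $\gamma\in\Gamma\subset\SL_2(\calO_F)$ never acts trivially in the remaining $d-1$ archimedean embeddings. The paper does not attempt such an argument; it simply invokes \cite[Chap.~1, Prop.~4.11]{Freitag} for the vanishing of $M_{\mathbf k}(\Gamma)$ when some $k_i=0$ but not all, and tacitly uses the standard fact that holomorphic Hilbert modular forms of negative (parallel) weight vanish. Replace your ad hoc argument by these citations and the proof goes through.
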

	\begin{proof}
	By Theorem \ref{NHMF_str} and $F \neq \bbQ$, any automorphic form in $\calN(\Gamma)$ generates a highest weight representation.
	Since any highest weight module has some infinitesimal character, we have $\calN(\Gamma,\chi_\lambda)^{\mathrm{gen}} = \calN(\Gamma,\chi_\lambda)$.
	Take a nearly holomorphic automorphic form $\varphi \in \calN(\Gamma,\chi_\lambda)$ of weight $\mathbf{k} = (k_1, \ldots, k_d)$.
	We may assume $\varphi$ is highest weight and $k_i \geq 0$ for any $i$.
	If $k_i \geq 1$ for all $i$, the automorphic form $\varphi$ generates the irreducible module $N(k_1,\ldots,k_d)$.
	In this case $\mathbf{k}$ is equal to $\lambda$.
	If $k_i = 0$ for some $i$, we may assume $k_1 = \cdots = k_d=0$.
	Indeed, if $k_j \neq 0$ for some $j \neq i$, we have $M_\mathbf{k}(\Gamma) = 0$ for any congruence subgroup $\Gamma$ by Freitag \cite[Chap.~1 Prop.~4.11]{Freitag}.
	Then the automorphic form $\varphi$ is a constant function.
	In this case one has $\lambda = (2, \ldots, 2)$ by easy computation.
	This completes the proof.
	\end{proof}

	\subsection{Main theorem}
	Let $P_0 = \Res_{F / \bbQ} B$, $M_0=\Res_{F/\bbQ} M$ and $N_0= \Res_{F/\bbQ} N$.
	Then $P_0$ is a Borel subgroup of $G$ with Levi decomposition $P_0=M_0N_0$.
	For $\phi \in \calA(G)$, we define the constant term $\phi_{0} \in \calA(P_0 \bs G)$ by
	\[
	\phi_{0}(g) = \int_{N_0(\bbQ) \bs N_0(\bbA_\bbQ)} \phi(ng) \, dn, \qquad g \in G(\bbA_\bbQ).
	\]

	By the definition of $\calE\calN(G)$, the map
	\[
	\calE\calN(G) \longrightarrow \calA(P_0 \backslash G) \colon \phi \longmapsto \phi_0
	\]
	is injective.
	Note that this map is $G(\bbA_\fini) \times \gk$-intertwining.
	Hence, the image is contained in $\calN(P_0 \bs G)$.
	For an infinitesimal character $\chi:\calZ \longrightarrow \bbC$, let
	\[
	\calE\calN(G,\chi) =\calE\calN(G) \cap \calN(G,\chi).
	\]
	By Corollary \ref{str_th_inf}, the action of $\calZ$ on $\calN(\Gamma,\chi_\lambda)$ is the character $\chi_\lambda$.
	Hence the action of $\calZ$ on $\calE\calN(G)$ is semisimple.
	This implies that
	\[
	\calE\calN(G) = \bigoplus_{\chi} \calE\calN(G,\chi).
	\]

	We say that $\lambda = (\lambda_1 ,\ldots ,\lambda_d) \in \bbC^d$ is parallel if $\lambda_1 = \cdots = \lambda_d$ and is integral if $\lambda \in \bbZ^d$.
	
	\begin{lem}\label{int_par}
	Suppose $\calE\calN(G,\chi_\lambda)$ is non-zero.
	Then there exists $w \in W$ such that $w \cdot \lambda$ is integral and parallel.
	\end{lem}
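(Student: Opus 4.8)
The plan is to read the infinitesimal character $\chi_\lambda$ off the constant term of a nonzero $\varphi\in\calE\calN(G,\chi_\lambda)$. Two facts established earlier are the only inputs: the constant-term map $\varphi\mapsto\varphi_0$ is injective on $\calE\calN(G)$ and $\gk$-equivariant; and $\varphi_0=\sum_{i=1}^\ell\phi_i$ with $\phi_i\in I(\mu_i,s_i)$, each $\mu_i$ a unitary Hecke character of $F^\times F_{\infty,+}^\times\bs\bbA_F^\times$ and each $s_i\in\bbZ$ (the Proposition describing constant terms of nearly holomorphic automorphic forms). Since each $\mu_i$ is trivial on $F_{\infty,+}^\times$, its component $\mu_{i,v}$ at an archimedean place $v$ is either $\mathbf 1_v$ or $\mathrm{sgn}$.

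First I would compute the infinitesimal character of each summand $I(\mu_i,s_i)$. For an archimedean place $v$ the principal series $I_v(\mu_v,s)=\Ind_{B(F_v)}^{\SL_2(F_v)}(\mu_v|\cdot|^s)$ has an honest infinitesimal character (a module induced from a character of the torus is a $\calZ$-eigenmodule), and in the normalization of \S\ref{BGG_cat} that character is $\chi_{s+1}$ and depends on $\mu_v$ only through $s$: this can be read off the composition series recorded in Lemma \ref{local}, e.g.\ $L(k)\hookrightarrow I_v(\mathrm{sgn}^k,k-1)$ forces it to be $\chi_k=\chi_{(k-1)+1}$ (as $L(k)$ is a quotient of $N(k)$, hence has infinitesimal character $\chi_k$), and $\bbC=L(0)\hookrightarrow I_v(\mathbf 1_v,-1)$ forces $\chi_0=\chi_{(-1)+1}$. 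The crucial point is that the \emph{same} parameter $s_i$ occurs at every $v\in\bfa$, so as a $\gk$-module $I(\mu_i,s_i)=\bigotimes_v I_v(\mu_{i,v},s_i)$ carries the single infinitesimal character $\chi_{\ul{s_i+1}}$, and $\ul{s_i+1}=(s_i+1,\dots,s_i+1)$ is parallel and, since $s_i\in\bbZ$, integral.

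To finish I would decompose $\varphi_0$ $\calZ$-isotypically. Since $\varphi\neq0$ and the constant-term map is injective on $\calE\calN(G)$, $\varphi_0\neq0$, so some $\phi_i\neq0$. Now $\varphi_0$ is a (generalized) $\chi_\lambda$-eigenvector for $\calZ$ because the constant term is $\calZ$-equivariant, whereas each $\phi_i$ lies in a $\calZ$-eigenmodule with honest character $\chi_{\ul{s_i+1}}$; grouping the $\phi_i$ by their $\calZ$-eigencharacter, the projection of $\varphi_0$ onto the generalized $\chi_\lambda$-eigenspace kills every $\phi_i$ with $\chi_{\ul{s_i+1}}\neq\chi_\lambda$, so $\varphi_0=\sum_{i:\chi_{\ul{s_i+1}}=\chi_\lambda}\phi_i\neq0$ and there is an index $i$ with $\phi_i\neq0$ and $\chi_{\ul{s_i+1}}=\chi_\lambda$. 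By the criterion for equality of infinitesimal characters ($\chi_\lambda=\chi_\mu$ iff $w\cdot\lambda=\mu$ for some $w\in W$), this gives $w\cdot\lambda=\ul{s_i+1}$ for some $w\in W$, which is integral and parallel, as required. The one delicate point, which I expect to be the main (if modest) obstacle, is the archimedean bookkeeping in the middle step --- verifying that the infinitesimal character of $I_v(\mu_v,s)$ depends only on $s$, in particular not on whether $\mu_v$ is $\mathbf 1_v$ or $\mathrm{sgn}$ and with no stray $\rho$-shift, so that a single global $s_i$ produces a genuinely parallel weight --- but this is precisely what the composition series in Lemma \ref{local} make explicit; everything else is quoted from the preceding results.
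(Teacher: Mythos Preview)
Your argument is correct, but it follows a genuinely different route from the paper's proof. The paper argues on the automorphic side: starting from a nonzero $\varphi\in\calE\calN(G,\chi_\lambda)$, it passes to the $(\frakg,K_\infty)$-module generated by $\varphi$, extracts a highest weight vector $\phi$ (which is then a holomorphic automorphic form orthogonal to cusp forms), and invokes Freitag's classical result that a non-cuspidal holomorphic Hilbert modular form has parallel weight; integrality comes from Corollary~\ref{str_th_inf}. You instead work on the constant-term side: you quote Proposition~\ref{const_term_NHAF} to write $\varphi_0=\sum_i\phi_i$ with $\phi_i\in I(\mu_i,s_i)$ and $s_i\in\bbZ$, observe that the archimedean infinitesimal character of $I(\mu_i,s_i)$ is $\chi_{\ul{s_i+1}}$ (parallel because the \emph{global} parameter $s_i$ is the same at every infinite place), and match this against $\chi_\lambda$ via $\calZ$-isotypic projection.

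Both approaches are clean; yours is arguably more representation-theoretic and avoids the appeal to Freitag, while the paper's avoids relying on Proposition~\ref{const_term_NHAF}. Note one expository wrinkle: in the paper, Proposition~\ref{const_term_NHAF} is stated and proved \emph{after} Lemma~\ref{int_par}, so your ``established earlier'' is a forward reference. There is no logical circularity (the proof of Proposition~\ref{const_term_NHAF} does not use Lemma~\ref{int_par}), but if you adopt this proof you should reorder the two results or flag the dependence explicitly.
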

	
	\begin{proof}
	Integrality of $\chi_\lambda$ is clear by Corollary \ref{str_th_inf}.
	Let $\Pi$ be a non-zero $G(\bbA_\fini)\times(\frakg,K_\infty)$-submodule of $\calE\calN(G,\chi_\lambda)$.
	Take a non-zero automorphic form $\varphi \in \Pi$.
	Let $\Pi_\infty$ be a $(\frakg,K_\infty)$-module generated by $\varphi$.
	Since $\Pi_\infty$ is a locally $\frakp_-$-finite module with finite length, there exists a highest weight vector $\phi$ of weight $\mu$ in $\Pi_\infty$.
	Note that $\phi$ is orthogonal to all cusp forms.
	Since holomorphic non-cusp forms with some weight have parallel weight by \cite[Chap.~1, Remark 4.8]{Freitag}, the weight $\mu$ is parallel and integral.
	By $\chi_\lambda=\chi_\mu$, there exists $w \in W$ such that $\mu = w \cdot \lambda$.
	This completes the proof.
	\end{proof}
	
	Now, we can obtain a description of a constant term.
	
	\begin{prop}\label{const_term_NHAF}
	For a nearly holomorphic automorphic form $\varphi \in \calN(G)$, there exist Hecke characters $\mu_1, \dots, \mu_\ell$ of $F^\times F_{\infty,+}^\times \bs \bbA^\times_F$, integers $s_1, \ldots s_\ell$ and functions $\phi_i \in \Ind_{B(\bbA_F)}^{\SL_2(\bbA_F)}(\mu_i |\cdot|^{s_i})$ such that
	\[
	\varphi_0 = \sum_{i=1}^\ell \phi_{i}.
	\]
	\end{prop}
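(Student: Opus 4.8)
The plan is to transfer $\varphi$ to the classical side, apply the structure theorem for nearly holomorphic Hilbert modular forms, and read the constant term off of that description. Since $\varphi$ is $K$-finite and $\frakp_-$-finite, the $(\frakg,K_\infty)$-module it generates is finitely generated and $\frakp_-$-finite, so every $K_\infty$-isotypic component of $\varphi$ is again nearly holomorphic, and $\varphi$ is fixed by some open compact subgroup $K_\fini\subset G(\bbA_\fini)$; hence I may assume $\varphi\in\calN(G)^{K_\fini}_\sigma$ for a single character $\sigma$ of $K_\infty$, that is, a weight $\mathbf k=(k_1,\dots,k_d)\in\bbZ^d$. Taking the constant term $\varphi\mapsto\varphi_0$ along $N_0$ commutes with the right $\frakg$-action (differentiate under the integral sign), hence with the $\frakp_-$-action, and it preserves $K$-finiteness, $\calZ$-finiteness and slow increase; therefore $\varphi_0\in\calN(P_0\bs G)$, and it suffices to compute $\varphi_0$. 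By the dictionary (\ref{corr_MF_AF}), $\varphi$ corresponds to a tuple $(f_1,\dots,f_h)$ with $f_i\in N_{\mathbf k}(\Gamma_i)$, one nearly holomorphic Hilbert modular form for each cusp $g_i$ of the level $K_\fini$, and $\varphi_0$ is reconstructed from the constant terms of the $f_i$ at the corresponding cusps.

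Now I would invoke Theorem \ref{NHMF_str}: each $f_i$ is a finite sum $\sum_a D_{i,a}h_{i,a}$, together with a term $D_{i,0}E_2$ when $F=\bbQ$, where the $h_{i,a}$ are holomorphic Hilbert modular forms and the $D_{i,a}$ are Maass--Shimura operators, i.e.\ products of the raising operators $R_{k,j}^{(\ell)}$. The key observation is that taking the constant term (integrating $x_1,\dots,x_d$ over a torus) commutes with each $R_{k,j}^{(\ell)}$, because the $\partial_{x_j}$-part of $R_{k,j}$ integrates to $0$ against functions that are periodic in $x_j$; so the constant term of $D_{i,a}h_{i,a}$ equals $D_{i,a}$ applied to the constant term of $h_{i,a}$. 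A holomorphic Hilbert modular form is either cuspidal, with vanishing constant term, or of parallel weight $k'$ by \cite[Chap.~1, Remark 4.8]{Freitag}, with constant term at $g_i$ a constant $c_{i,a}\in\bbC$; and since $R_{k,j}$ sends a constant to a constant multiple of $1/y_j$, the operator $D_{i,a}=\prod_j R_{k,j}^{(\ell_{a,j})}$ sends $c_{i,a}$ to a single monomial $c_{i,a}'\prod_j y_j^{-\ell_{a,j}}$, of weight $(k'+2\ell_{a,1},\dots,k'+2\ell_{a,d})=\mathbf k$. Tracing through the correspondence between a classical pure-power function of weight $\ell$ at a real place and a section of $I_v(\mu_v,s)$ (the weight-$\ell$ vector corresponds to the power $y^{(s+1-\ell)/2}$), such a monomial is exactly the image under (\ref{corr_MF_AF}) of the weight-$\mathbf k$ vector of an induced representation $\Ind_{B(\bbA_F)}^{\SL_2(\bbA_F)}(\mu_{i,a}|\cdot|^{s_{i,a}})$ whose exponent $s_{i,a}=k'-1\in\bbZ$ is the \emph{same} at every archimedean place, and $\mu_{i,a}$ is the idele class character determined by the cusp $g_i$ and the level $K_\fini$. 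Since the archimedean part of this character is the pure power $|\cdot|^{k'-1}$, it is trivial on $F_{\infty,+}^\times$, so $\mu_{i,a}$ is a Hecke character of $F^\times F_{\infty,+}^\times\bs\bbA_F^\times$. Summing over $i$ and $a$, and adding the $E_2$-contribution when $F=\bbQ$ (whose constant term $\frac{3}{\pi y}-1$ is again a finite sum of monomials in $1/y$ of integral exponent, identified by the same dictionary with sections of integer-exponent induced representations), gives $\varphi_0=\sum_i\phi_i$ as claimed.

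The step I expect to be the main obstacle is the last one: checking that the monomials in the $y_v$, together with their transformation behaviour in the finite variables coming from the cusps $g_i$ and the level $K_\fini$, genuinely assemble into honest $B(\bbA_F)$-equivariant sections of induced representations, and that the exponents that come out are integral rather than arbitrary complex numbers. An equivalent and perhaps cleaner route, which bypasses Theorem \ref{NHMF_str}, is intrinsic: restrict $\varphi_0$ to the torus $M_0(\bbA)\cong\bbA_F^\times$; then $\calZ$-finiteness and moderate growth force it, in each $K$-isotypic component, into a finite sum of Hecke characters times $|\cdot|^{s}$ times polynomials in $\log|\cdot|$, and the $\frakp_-$-finiteness of $\varphi_0$ forces each relevant power of $y_v$ to lie in $\bbZ_{\le 0}$ --- since $L_k$ sends $y^{-a}$ to a constant multiple of $y^{-a+1}$, a $\frakp_-$-finite function of $y$ is a polynomial in $1/y$ --- which simultaneously kills the logarithmic terms and pins the exponents down to integers; one then extends each surviving piece to a global section $\phi_i$ using the Iwasawa decomposition $G(\bbA)=N_0(\bbA)M_0(\bbA)K$ and $K$-finiteness. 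Either way, the delicate ingredient is the passage from ``$\frakp_-$-finite function of the archimedean variables'' to ``polynomial in the $1/y_v$ with integral exponents''.
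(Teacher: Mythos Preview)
Your second, ``intrinsic'' route is essentially the paper's argument, and it is the one that actually closes. The paper makes the two ingredients explicit: first, by \cite[Lemma I.3.2]{MW} together with the commutativity of $M_0$, the constant term $\varphi_0\in\calA(P_0\bs G)$ is automatically left $M_0(\bbA)$-finite; second, passing to the classical Fourier expansion (via strong approximation) gives
\[
(\varphi_0)_k(m_\infty)=j(m_\infty,\mathbf i)^{-\mathbf k}\,a\bigl(m_\infty(\mathbf i),0,\gamma_{k,\infty}\bigr),
\]
where $a(z,0,\gamma)$ is the $\xi=0$ Fourier coefficient of the associated nearly holomorphic form and hence, by $\frakp_-$-finiteness, a polynomial in $y_1^{-1},\dots,y_d^{-1}$. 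This shows directly that the left $M_0(\bbR)$-action on $\varphi_0$ is semisimple with integral exponents, and combined with the $M_0(\bbA)$-finiteness this yields the decomposition into sections of $\Ind_{B(\bbA_F)}^{\SL_2(\bbA_F)}(\mu_i|\cdot|^{s_i})$. In particular the paper does \emph{not} invoke Theorem~\ref{NHMF_str} here; the only classical input is that the zeroth Fourier coefficient of a nearly holomorphic form is a polynomial in the $1/y_j$.

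Your first route through Theorem~\ref{NHMF_str} is a genuine detour, and the obstacle you flag is real. After writing each $f_i$ as a sum of Maass--Shimura derivatives of holomorphic forms you still have to reassemble the constants at the various cusps $g_1,\dots,g_h$ into Hecke characters, and that step is exactly the content of \cite[Lemma I.3.2]{MW} (equivalently, the structure theory of automorphic forms on $\GL_1$). So Shimura's structure theorem buys nothing for this proposition. Concretely, the phrase ``$\mu_{i,a}$ is the idele class character determined by the cusp $g_i$ and the level $K_\fini$'' is not correct as written: a single cusp does not determine a Hecke character; rather, $\varphi_0$ sits in a finite-dimensional $M_0(\bbA)$-stable space which one then decomposes into characters, each character being visible across all cusps simultaneously. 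Once you grant yourself that decomposition step, you have reproduced the paper's proof and Theorem~\ref{NHMF_str} was never needed.
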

	
	\begin{proof}
	By the proof of \cite[Lemma I.3.2]{MW} and commutativity of $M_0(\bbA_\bbQ)$, any automorphic form $\phi \in \calA(B \backslash G)$ is left $M_0 (\bbA_\bbQ)$-finite.
	Take $\varphi \in \calN(G)$.
	Since $\varphi$ is $K_\infty$-finite, it decomposes as a finite sum
	\[
	\varphi=\sum_\mathbf{k} \varphi_\mathbf{k},
	\]
	where $\varphi_\mathbf{k}$ is of weight $\mathbf{k}$.
	Then, $\varphi_\mathbf{k}$ is also $\frakp_-$-finite by $\mathrm{Ad}(K_\infty) (\frakp_-) = \frakp_-$.
	We may assume $\varphi=\varphi_{\mathbf{k}}$.
	Define a nearly holomorphic Hilbert modular form $F_\varphi|_{\mathbf{k}} \gamma$ by
	\[
	(F_\varphi |_\mathbf{k} \gamma)(g_\infty(\mathbf{i})) = j(\gamma g_\infty,\mathbf{i})^\mathbf{k} \varphi(g_\infty \gamma_{\fini}^{-1}), \qquad g_\infty \in G(\bbR), \,\, \gamma \in \SL_2(\calO_F),
	\]
	where $\gamma_\fini$ is an element of $G(\bbA_\bbQ)$ defined by
	\[
	(\gamma_\fini)_v = \gamma \quad \text{for $v < \infty$}, \qquad \gamma_\infty = 1 \quad \text{for $v \in \bfa$},
	\]
	and $j(\gamma g_\infty,\mathbf{i})^\mathbf{k} = J_{\sigma_{\mathbf{k}}}(\gamma g_\infty, \mathbf{i})$ (see section \ref{AF_vs_MF}).
	Here $\sigma_\mathbf{k}$ corresponds to the irreducible representation of $K_\infty$ with the parameter $\mathbf{k}$.
	For $\xi \in F$ and $z = (z_1, \ldots,z_d) \in \bbC^d$, set
	\[
	\bfe_\xi(z) = \exp \left(2\pi \sqrt{-1}\, \sum_{j=1}^d \xi^{(j)}z_{j} \right),
	\]
	where $\xi^{(j)} = \infty_j(\xi)$ for $\infty_j \in \bfa$.
	We denote the Fourier expansion of $F_\varphi|_{\mathbf{k}}\gamma$ by
	\[
	(F_\varphi |_\mathbf{k} \gamma)(z) = \sum_{\xi \in F} a(z,\xi,\gamma) \bfe_\xi(z), \qquad z=(z_1, \ldots, z_d) \in \prod_{v \in \bfa}\frakH_v.
	\]
	Since $\varphi$ is $\frakp_-$-finite, the Fourier coefficient $a(z,\xi,\gamma)$ is a polynomial in $y_1^{-1}, \ldots, y_d^{-1}$, where $(y_1,\ldots,y_d)=\mathrm{Im}(z)$.
	For $k \in G(\bbA_\fini)$, let $(\varphi_0)_k$ be the automorphic form on $M_0(\bbA)$ defined by
	\[
	(\varphi_0)_k(m) = \varphi_0(mk).
	\]
	Let $K_\varphi$ be the stabilizer of $\varphi$ in $G(\bbA_\fini)$.
	By the strong approximation theorem, for $k \in G(\bbA_\fini)$, there exists $\gamma_k \in G(\bbQ)$ such that the finite component of $\gamma_k^{-1} k$ lies in the open compact subgroup $K_\varphi$.
	Then, for $g_\infty \in G(\bbR)$ and $k \in G(\bbA_\fini)$, we have
	\[
	\varphi(g_\infty k) = \varphi(\gamma_k^{-1} \gamma_{k,\infty} g_\infty) = (F_\varphi |_{\mathbf{k}} \gamma_{k,\infty} g_\infty)(\mathbf{i}) = j(g_\infty,\mathbf{i})^{-\mathbf{k}}(F_\varphi|_\mathbf{k} \gamma_{k,\infty})(g_\infty(\mathbf{i})).
	\]
	Here $\gamma_{k,\infty}$ is the infinity component of $\gamma_k$.
	For $m_\infty \in M_0(\bbR)$, we then have
	\begin{align*}
	(\varphi_0)_k(m_\infty) 
	&= \int_{N_0(\bbQ) \bs N_0(\bbA)} \varphi(nm_\infty k) \, dn = \int_{L \bs N_0(\bbR)} \varphi(n_\infty m_\infty k)\, dn_\infty \\
	&= \int_{L \bs N_0(\bbR)} \sum_{\xi \in F} j(n_\infty m_\infty,\mathbf{i})^{-\mathbf{k}} a(n_\infty m_\infty(\mathbf{i}),\xi,\gamma_{k,\infty}) \bfe_\xi(n_\infty m_\infty(\mathbf{i})) \, dn_\infty \\
	&= j(m_\infty,\mathbf{i})^{-\mathbf{k}} a(m_\infty(\mathbf{i}),0,\gamma_{k,\infty}).
	\end{align*}
	Here, $L$ is the lattice of $N_0(\bbR)$ corresponding to $N_0(\bbA_\fini) \cap K_\varphi $, i.e., the infinity component of $N_0(\bbQ) \cap (K_\varphi \times N_0(\bbR))$.
	Note that $j(n_\infty m_\infty,\mathbf{i})^\mathbf{k}$ and $a(n_\infty m_\infty(\mathbf{i}),\xi,\gamma_{k,\infty})$ do not depend on $n_\infty$ by the definitions.
	This shows that the action of $M_0(\bbR)$ on $\varphi_0$ under the left translation is semisimple.
	By \cite[Lemma I.3.2]{MW}, there exist Hecke characters $\mu_j$ of $F^\times F_{\infty,+}^\times \bs \bbA_F^\times$ and integers $s_j$ for $1 \leq j\leq \ell$ such that
	\[
	\varphi_0 \in \sum_{j=1}^\ell \Ind_{B(\bbA_F)}^{\SL_2(\bbA_F)}(\mu_j |\cdot|^{s_j}).
	\]
	This completes the proof.
	\end{proof}
	
	We now state the main theorem.
	By Proposition \ref{const_term_NHAF}, the module $\calE\calN(G,\chi)$ is contained in the algebraic direct sum of certain induced representations.
	For $k \in \bbZ$, let
	\[
	\ul{k} = (k,\ldots,k) \in \bbZ^d.
	\]
	For a Hecke character $\mu$ of $F^\times F_{\infty,+}^\times \bs \bbA_F^\times$, let $I(\mu,s)$ be the induced representation $\Ind_{B(\bbA_F)}^{\SL_2(\bbA_F)} (\mu|\cdot|^s)$ and $I_\fini(\mu,s)$ the finite part of the induced representation $I(\mu,s)$.
	We denote by $\mathfrak{X}$ the set of all Hecke characters of $F^\times F_{\infty,+}^\times \bs \bbA_F^\times$.
	Put
	\[
	\mathfrak{X}_{1} = \{\mu = \otimes_v \mu_v \in \mathfrak{X} \mid \text{$\mu_v = \mathbf{1}_v$ for any $v \in \bfa$}\}
	\]
	and
	\[
	\mathfrak{X}_{-1} = \{\mu = \otimes_v \mu_v \in \mathfrak{X} \mid \text{$\mu_v = \mathrm{sgn}$ for any $v \in \bfa$}\}.
	\]
	For Hecke characters $\mu_1$ and $\mu_2$, we say that $\mu_1$ and $\mu_2$ are associate if $\mu_1 = \mu_2$ or $\mu_1 = \mu_2^{-1}$.
	If $\mu_1$ and $\mu_2$ are associate, we write $\mu_1 \sim \mu_2$.
	
	By Theorem \ref{NHMF_str}, if $F=\bbQ$, there exists a unique (up to constant) nearly holomorphic automorphic form of weight two which is invariant under the right translation of $\prod_{v < \infty} \SL_2(\calO_{F_v})$.
	We write $E_{2,\bbA}$ such the automorphic form.
	Under the isomorphism (\ref{corr_MF_AF}), $E_{2,\bbA}$ corresponds to the weight two Eisenstein series $E_2$ (see (\ref{E_2})).

	\begin{thm}\label{main}
	Suppose $\lambda \in \bbZ^d_{\geq 1}$.
	\begin{enumerate}
	\item The space $\calE\calN(G,\chi_\lambda)$ is zero unless $\lambda$ is parallel.
	\item If $k \geq 3$ and $\lambda=\ul{k}$, we have
	\[
	\calE\calN(G,\chi_\lambda) \cong \bigoplus_{\mu \in \mathfrak{X}_{(-1)^{k}}} \left( I_\fini(\mu,k-1) \otimes L(\ul{k}) \right).
	\]
	\item If $k=1$ and $\lambda=\ul{k}$, we have
	\[
	\calE\calN(G,\chi_\lambda) \cong \bigoplus_{\begin{smallmatrix}\mu \in \mathfrak{X}_{-1} / \sim \\ \mu^2 \neq \mathbf{1}\end{smallmatrix}}\left( I_\fini(\mu,0) \otimes L(\ul{1}) \right) \oplus \bigoplus_{V} R(V),
	\]
	where $V$ runs through all isometry classes of two-dimensional quadratic spaces over $F$ such that $V_v$ has signature $(2,0)$ for all $v \in \bfa$.
	\item If $k=2$ and $\lambda = \ul{k}$, we have
	\[
	\calE\calN(G,\chi_\lambda) \cong 
	\begin{dcases}
	\bigoplus_{\begin{smallmatrix}\mu \in \mathfrak{X}_{1} \\ \mu \neq \mathbf{1} \end{smallmatrix}} \left( I_\fini(\mu,1) \otimes L(2)\right) \oplus \pi & \text{if $F=\bbQ$}\\
	\bigoplus_{\omega \in \mathfrak{X}_{1}} \left( I_\fini(\omega,1) \otimes L(\ul{2}) \right) \oplus \bbC &\text{if $F \neq \bbQ$}.
	\end{dcases}
	\]
	Here $\pi$ is a $G(\bbA_\fini) \times (\frakg,K_\infty)$-module generated by $E_{2,\bbA}$.
	Moreover we have a long exact sequence
	\[
	0 \longrightarrow \bbC \longrightarrow \pi \longrightarrow  I_\fini(\mathbf{1},1) \otimes L(2) \longrightarrow 0.
	\]
	\end{enumerate}
	\end{thm}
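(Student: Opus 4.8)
The plan is to analyze the constant-term map and then match its image against explicit Eisenstein series. Part~(1) is immediate from Lemma~\ref{int_par}: if $\calE\calN(G,\chi_\lambda)\neq 0$ then some $w\cdot\lambda$ is parallel and integral, and inspecting the $W$-action on $\bbZ^d_{\geq 1}$ shows that a $W$-translate of $\lambda\in\bbZ^d_{\geq 1}$ can be parallel only when $\lambda$ itself is. For the remaining parts set $\lambda=\ul{k}$ and use the $G(\bbA_\fini)\times(\frakg,K_\infty)$-equivariant injection $\calE\calN(G,\chi_{\ul{k}})\hookrightarrow\calN(P_0\bs G)$, $\varphi\mapsto\varphi_0$. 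By Proposition~\ref{const_term_NHAF} the image sits in an algebraic sum of induced representations $I(\mu,s)$ with $\mu$ a unitary Hecke character of $F^\times F_{\infty,+}^\times\bs\bbA_F^\times$ and $s\in\bbZ$, and the Hecke characters are separated by the semisimple action of $M_0(\bbA)$ on constant terms, so each pair $(\mu,s)$ contributes at most one copy. Two constraints cut the list down: equality of infinitesimal characters, which by \S\ref{BGG_cat} forces $s+1\in\{k,2-k\}$; and archimedean $\frakp_-$-finiteness, which by Lemma~\ref{local}(1) forces $\mu\in\mathfrak{X}_{(-1)^k}$ and, when $k\geq 3$, $s=k-1$ — the branch $s=1-k\leq -2$ carrying no nonzero $\frakp_-$-finite archimedean vector — while for $k=1$ it leaves $s=0$ and $\mu\in\mathfrak{X}_{-1}$, and for $k=2$ it leaves $s\in\{1,-1\}$ and $\mu\in\mathfrak{X}_1$.

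Next I would bound $\calE\calN(G,\chi_{\ul{k}})$ from above as a $(\frakg,K_\infty)$-module. By Corollary~\ref{str_th_inf} for $F\neq\bbQ$ and by \cite{pss1} for $F=\bbQ$, it is built from $L(\ul{k})$, together with the trivial module when $k=2$ and, for $F=\bbQ$ and $k=2$, the non-semisimple module generated by $E_2$. Hence at each archimedean place the constant term can only meet the $\frakp_-$-finite, holomorphic-type constituent of the local factor $I_\infty(\mu_\infty,s)$; by Lemma~\ref{local}(1) this constituent is the (limit of) holomorphic discrete series submodule $L(k)\hookrightarrow I_\infty(\mathrm{sgn}^k,k-1)$ when $k\geq 3$, the submodule $R(2,0)=L(1)\hookrightarrow I_\infty(\mathrm{sgn},0)$ when $k=1$, and, when $k=2$, either $L(2)\hookrightarrow I_\infty(\mathbf{1}_\infty,1)$ or the non-split extension of $L(2)$ by $\bbC$ inside $I_\infty(\mathbf{1}_\infty,-1)$, together with the one-dimensional module $\bbC$ itself. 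Feeding this back, for $k\geq 3$ one obtains the inclusion $\calE\calN(G,\chi_{\ul{k}})\hookrightarrow\bigoplus_{\mu\in\mathfrak{X}_{(-1)^k}} I_\fini(\mu,k-1)\otimes L(\ul{k})$; for $k=1$ one inserts the decomposition $I(\mu,0)=\bigoplus_V R(V)\oplus\bigoplus_\calC R(\calC)$ of \S\ref{ind_rep_theta_corr} when $\mu^2=\mathbf{1}$, discarding the incoherent $R(\calC)$ by Theorem~\ref{KR}(1) and the $R(V)$ having some $V_v$ of signature $(0,2)$ by archimedean $\frakp_-$-finiteness, and keeps track of associate classes when $\mu^2\neq\mathbf{1}$; for $k=2$ the two archimedean branches assemble into the asserted right-hand side.

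For the reverse inclusion I would realize every summand by an Eisenstein series built from the (limit of) holomorphic discrete series section at the archimedean places. When $k\geq 3$, $E(f,s)$ converges at $s=k-1$, its constant term along $P_0$ equals $f+M(k-1)f$ for the standard intertwining operator $M(s)\colon I(\mu,s)\to I(\mu^{-1},-s)$, and $M(k-1)f=0$ since its archimedean target $I_\infty(\mathrm{sgn}^k,1-k)$ has no nonzero $\frakp_-$-finite vector while $f$ has $\frakp_-$-finite archimedean component; then $E(f,k-1)$ is a holomorphic, non-cuspidal Hilbert Eisenstein series of weight $\ul{k}$ (Theorem~\ref{NHMF_str}, \cite{87_shimura}) with constant term $f$, so the constant-term map is onto the claimed space. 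When $k=1$ I would use the Kudla--Rallis realizations $R(V)\to\calA(G)$ by Eisenstein series (Theorem~\ref{KR}(2)) for the coherent, totally positive definite $V$, and ordinary Eisenstein series $E(f,0)$ when $\mu^2\neq\mathbf{1}$, near-holomorphy following once more from $R(2,0)$ being the holomorphic limit of discrete series. When $k=2$ the residue of $E(f,s)$ at $s=1$ supplies the trivial summand $\bbC$; for $F\neq\bbQ$ the holomorphic Eisenstein series of weight $\ul{2}$ — which, unlike over $\bbQ$, exist even for trivial character — supply the remaining $I_\fini(\mu,1)\otimes L(\ul{2})$, and Theorem~\ref{NHMF_str} leaves nothing further, so the module is semisimple; for $F=\bbQ$ the missing generator is $E_{2,\bbA}$, and since $\frakp_-E_{2,\bbA}$ is a nonzero constant with $\frakp_-^2 E_{2,\bbA}=0$, the module $\pi$ it generates contains $\mathrm{triv}_{G(\bbA_\fini)}\otimes\bbC$ as a submodule with quotient $I_\fini(\mathbf{1},1)\otimes L(2)$, the extension being non-split because a splitting would present $E_{2,\bbA}$ as a constant plus a $\frakp_-$-annihilated weight-two vector, i.e.\ a holomorphic level-one weight-two form, which does not exist.

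The main obstacle is the realization step together with the two degenerate weights. For $k=1$ one must determine exactly which theta lifts occur — that precisely the $V$ with $V_v$ of signature $(2,0)$ for all $v\in\bfa$ contribute and no incoherent family does — which is where the full force of Theorem~\ref{KR} and the archimedean occurrence statement for $R(2,0)$ are needed. For $k=2$ over $\bbQ$, establishing the precise non-split structure of $\pi$ and that its constant term fills out $I_\fini(\mathbf{1},1)\otimes L(2)$ exactly is a smaller but genuine point, as is the verification that the weight-$\ul{k}$ Eisenstein series and their residues are nearly holomorphic with the constant terms claimed above, which rests on Shimura's analytic results rather than on anything proved earlier in the paper.
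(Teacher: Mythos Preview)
Your overall strategy coincides with the paper's: inject $\calE\calN(G,\chi_{\ul{k}})$ into a sum of induced representations via the constant-term map, cut down the target using Lemma~\ref{local} and the infinitesimal character, and produce the inverse by explicit Eisenstein series. Your argument for $k\geq 3$ is correct; the vanishing $M(k-1)f=0$ that you deduce from the absence of $\frakp_-$-finite vectors in $I_\infty(\mathrm{sgn}^k,1-k)$ is exactly what the paper obtains by the explicit gamma-factor computation in Lemmas~\ref{k>3_odd} and~\ref{k>3_even}, just phrased more structurally.

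The two places you flag as obstacles are precisely where the paper supplies ingredients your outline does not. For $k=1$, your semisimplicity argument together with Theorem~\ref{KR}(1) correctly excludes incoherent $R(\calC)$, but it does not by itself pin down the multiplicity for non-quadratic $\mu$: the constant-term target is $\bigoplus_{\mu\in\mathfrak{X}_{-1}} I_\fini(\mu,0)\otimes L(\ul{1})$ summed over \emph{all} $\mu$, and $\mu,\mu^{-1}$ contribute isomorphic but distinct summands, so the abstract upper bound is twice the claimed space. The paper closes this gap not abstractly but by invoking Shimura's result \cite[Theorem~8.3]{85_shimura} that the orthogonal complement of cusp forms in $M_{\ul{1}}(\Gamma)$ is exactly the span of Eisenstein series; this forces everything in $\calE\calN(G,\chi_{\ul{1}})$ to be an Eisenstein series, whence the functional equation (Lemma~\ref{k=1_non-quad}) collapses each associate pair to one copy.

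For $k=2$ and $F=\bbQ$, the paper does not analyze $\pi$ directly as you propose. Instead it sandwiches $\calE\calN(G,\chi_2)$ between the explicit submodule $\tau=\bbC\oplus\bigoplus_{v<\infty}M^{(v)}\oplus\bigoplus_{\mu\neq\mathbf{1}}I(\mu,1)$ (each summand realized by Lemma~\ref{k=2}) and the module $\Pi=I_\fini(\mathbf{1},-1)\otimes N(0)^\vee\oplus\bigoplus_{\mu}I_\fini(\mu,1)\otimes L(2)$ obtained from the constant-term analysis; one checks $\Pi/\tau\cong(\mathrm{triv}_{G(\bbA_\fini)}\otimes L(2))^{\oplus 2}$, and then the dimension count $\dim N_2(\SL_2(\bbZ))=1$ forces $\calE\calN(G,\chi_2)/\tau$ to have length one rather than two. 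This is what yields $\pi/\bbC\cong I_\fini(\mathbf{1},1)\otimes L(2)$, the step you correctly identify as missing from your sketch. (A minor point: for $F\neq\bbQ$ the weight-$\ul{2}$ Eisenstein series is already holomorphic at $s=1$ by Lemma~\ref{k=2}, so the summand $\bbC$ is simply the constant function, not a residue of that series.)
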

	
	We compare the space of nearly holomorphic automorphic forms and the space of Eisenstein series (cf.~Appendix \ref{sp_Eis_ser}).
	Conjecture \ref{conj} follows immediately from the proof of Theorem \ref{main}.
	
	\begin{cor}\label{cor_main_1}
	Conjecture \ref{conj} is true for $G = \Res_{F / \bbQ} \SL_2$.
	\end{cor}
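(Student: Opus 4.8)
The plan is to read Conjecture \ref{conj} off from the explicit description of $\calE\calN(G)$ obtained in Theorem \ref{main}, interpreting each constituent there as the $\frakp_-$-finite part of a space of Eisenstein series. First I would establish the two inclusions that make up the conjectural decomposition. For ``$\subseteq$'': any $\varphi\in\calE\calN(G)$ is orthogonal to $\calS\calN(G)$, so by the decomposition of $\calA(G)$ along cuspidal support \cite{MW} it lies in the Eisenstein part attached to proper standard parabolics; since the only proper standard Levi subgroup of $G=\Res_{F/\bbQ}\SL_2$ is the maximal torus $M_0=\Res_{F/\bbQ}M$, and $\varphi$ is $\frakp_-$-finite, $\varphi$ belongs to $\bigoplus_\mu\calE_0(M_0,\mu)_{\frakp_- \fin}$, where $\calE_0(M_0,\mu)$ is the space of Eisenstein series with cuspidal datum $(M_0,\mu)$ defined in \S\ref{sp_Eis_ser} and $\mu$ runs over association classes of Hecke characters of $F^\times F_{\infty,+}^\times\bs\bbA_F^\times$ modulo $\mu\sim\mu^{-1}$; the constant-term computation of Proposition \ref{const_term_NHAF} makes this concrete, $\varphi_0$ being a finite sum of sections of induced representations $I(\mu,s)$. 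For ``$\supseteq$'': each $\calE_0(M_0,\mu)_{\frakp_- \fin}$ consists by construction of $\frakp_-$-finite automorphic forms built from Eisenstein series attached to a proper parabolic (or their residues), hence lies in $\calN(G)$ and is orthogonal to all cusp forms. With the semisimplicity of the $\calZ$-action (Corollary \ref{str_th_inf}) and Lemma \ref{int_par}, it then remains to identify, for each parallel integral $\lambda=\ul k$, the $\chi_\lambda$-part of $\bigoplus_\mu\calE_0(M_0,\mu)_{\frakp_- \fin}$ with the right-hand side of Theorem \ref{main}, and thereby to pin down exactly which data $(M_0,\mu)$ contribute.

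Second I would carry out this identification point by point, using Lemma \ref{local} and Theorem \ref{KR}. By Lemma \ref{local}, $I(\mu,s)$ carries a non-zero $\frakp_-$-finite vector only when $s=k-1$ for an integer $k\geq1$ with $\mu\in\mathfrak{X}_{(-1)^k}$, so the infinitesimal character $\chi_{\ul k}$ pins the datum down to such a pair. If $k\geq3$, the Eisenstein series attached to $\mu$ is holomorphic at $s=k-1$ and surjects onto all of $I(\mu,k-1)$, whose $\frakp_-$-finite part is $I_\fini(\mu,k-1)\otimes L(\ul k)$; summing over $\mu\in\mathfrak{X}_{(-1)^k}$ gives Theorem \ref{main}(2), and for these data $\calE_0(M_0,\mu)_{\frakp_- \fin}\ne0$. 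If $k=1$, i.e.\ $s=0$, I would use the decomposition $I(\chi,0)=\bigoplus_V R(V)\oplus\bigoplus_\calC R(\calC)$ of \S\ref{ind_rep_theta_corr}: by Theorem \ref{KR} no incoherent $R(\calC)$ contributes to $\calA(G)$, each coherent $R(V)$ does, and $\frakp_-$-finiteness at the archimedean places forces $V_v$ positive definite for every $v\in\bfa$, so that $R(V_v)$ is the limit of holomorphic discrete series of weight $1$; this is Theorem \ref{main}(3). If $k=2$, i.e.\ $s=1$, the Eisenstein series attached to $\mu\in\mathfrak{X}_1$ has a simple pole whose residue is the constant function when $\mu=\mathbf{1}$; the $\frakp_-$-finite Eisenstein space is thus $\bbC\oplus(I_\fini(\mu,1)\otimes L(\ul2))$ for $\mu\ne\mathbf{1}$ and, when $F=\bbQ$, the indecomposable module $\pi$ generated by $E_{2,\bbA}$ for $\mu=\mathbf{1}$, the non-split extension $0\to\bbC\to\pi\to I_\fini(\mathbf{1},1)\otimes L(2)\to0$ recording precisely the residue; this is Theorem \ref{main}(4). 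Taken together these cases both produce the decomposition of Conjecture \ref{conj} and determine the set of contributing data.

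The main obstacle is the bookkeeping at the reducible and residual points $k=1,2$. One must verify: (a) that the residue along $P_0$ of the relevant Eisenstein series genuinely lies in $\calN(G)$ and supplies exactly the extra summands (the trivial representation $\bbC$ when $F\ne\bbQ$, and the submodule $\bbC\subset\pi$ when $F=\bbQ$); (b) that the theta lifts $R(V)$ are precisely the $\frakp_-$-finite automorphic constituents of $I(\chi_V,0)$, with no surviving incoherent piece, using Theorem \ref{KR} and the archimedean computation of \cite{1990_Kudla-Rallis}; and (c) that $\pi$ is the \emph{full} $\frakp_-$-finite part of $\calE_0(M_0,\mathbf{1})$ and that the extension $0\to\bbC\to\pi\to I_\fini(\mathbf{1},1)\otimes L(2)\to0$ is genuinely non-split, rather than $\pi$ being a proper submodule or the extension splitting. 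All of (a)--(c) are, however, exactly what is carried out in the course of proving Theorem \ref{main} (together with Theorem \ref{NHMF_str} and Corollary \ref{str_th_inf}), so Conjecture \ref{conj} follows at once for $G=\Res_{F/\bbQ}\SL_2$.
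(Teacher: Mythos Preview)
Your approach matches the paper's, which simply asserts that Conjecture \ref{conj} follows immediately from the proof of Theorem \ref{main}; you have spelled out that deduction in detail, walking through the Eisenstein realizations of Lemmas \ref{k>3_odd}--\ref{k=2} and Theorem \ref{KR} case by case.

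One caveat on formulation: you frame the goal as a ``conjectural decomposition'' $\calE\calN(G)=\bigoplus_\mu\calE_0(M_0,\mu)_{\frakp_-\fin}$ and spend effort on both inclusions and on identifying the contributing data. But Conjecture \ref{conj} as actually stated in the appendix consists of three separate assertions: (1) the inclusion $\calN(A\bs G)_{(M,\tau)}\subset\calE_0(M,\tau)$ for each cuspidal datum, (2) semisimplicity of the $\calZ$-action on all of $\calN(A\bs G)$ (including the cuspidal part), and (3) integrality of the infinitesimal character of any $\tau$ with $\calN(A\bs G)_{(M,\tau)}\neq0$. Your argument does cover all three --- (1) via the explicit Eisenstein constructions in the proof of Theorem \ref{main}, (2) via Corollary \ref{str_th_inf} together with \cite{pss1} when $F=\bbQ$, and (3) via Lemma \ref{local} and the constant-term analysis --- but you should align your write-up with those three items rather than with the (stronger, closely related) decomposition statement, and not omit the cuspidal case $M=G$ in part (2), where semisimplicity comes from unitarity of cuspidal representations rather than from Corollary \ref{str_th_inf}.
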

	
	Finally, we compare the space generated by holomorphic automorphic forms and the space of nearly holomorphic automorphic forms.
	Let $\mathcal{H}(G)$ be the $G(\bbA_\fini) \times (\frakg,K_\infty)$-module generated by all holomorphic automorphic forms.
	
	\begin{cor}\label{cor_main_2}
	As a $G(\bbA_\fini) \times (\frakg,K_\infty)$-module, we have
	\[
	\calN(G)/\mathcal{H}(G) \cong
	\begin{dcases}
	\mathrm{triv}_{G(\bbA_\fini)} \otimes L(2) &\text{if $F=\bbQ$}\\
	0 &\text{if $F \neq \bbQ$}.
	\end{dcases}
	\]
	Here $\mathrm{triv}_{G(\bbA_\fini)}$ is the trivial representation of $G(\bbA_\fini)$.
	\end{cor}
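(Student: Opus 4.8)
The plan is to reduce, via the semisimplicity of the $\calZ$-action on $\calN(G)$ and the explicit decomposition in Theorem~\ref{main}, to a bookkeeping of vectors annihilated by $\frakp_-$. First note that $\mathcal{H}(G)\subseteq\calN(G)$ since holomorphic forms are $\frakp_-$-annihilated, and that $\mathcal{H}(G)$ is by definition the submodule of $\calN(G)$ generated by its $\frakp_-$-annihilated vectors. The key elementary remark is: if $M\subseteq\calN(G)$ is a $G(\bbA_\fini)\times(\frakg,K_\infty)$-direct summand, then $\mathcal{H}(G)\cap M=M^{\mathrm{hol}}$, the submodule of $M$ generated by the $\frakp_-$-annihilated vectors lying in $M$. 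Indeed the projection $\calN(G)\to M$ is $\frakp_-$-equivariant, so it carries each generator of $\mathcal{H}(G)$ into $M^{\mathrm{hol}}$, giving $\mathcal{H}(G)\cap M\subseteq M^{\mathrm{hol}}$; the reverse inclusion is immediate because $\frakp_-$-annihilated vectors in $M$ are holomorphic automorphic forms. Since $\calZ$ acts semisimply, $\mathcal{H}(G)=\bigoplus_\chi(\mathcal{H}(G)\cap\calN(G,\chi))$, and it suffices to compute $M/M^{\mathrm{hol}}$ for each summand $M$ appearing in $\calN(G)=\bigoplus_\chi\calN(G,\chi)=\bigoplus_\chi(\calS\calN(G,\chi)\oplus\calE\calN(G,\chi))$.

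Next I would run through the pieces. For the cuspidal part, $\calS\calN(G)$ is a sum of irreducible cuspidal submodules, each of which is a highest weight module $L(\mathbf{k})$ whose archimedean component is a (limit of) holomorphic discrete series (by Corollary~\ref{str_th_inf} for $F\neq\bbQ$ and \cite{pss1} for $F=\bbQ$); such a module is generated by a highest weight vector, which is $\frakp_-$-annihilated, so $M^{\mathrm{hol}}=M$ and it contributes nothing. For $\calE\calN(G,\chi_\lambda)$, by Lemma~\ref{int_par} we may assume $\lambda=\ul{k}$ with $k\in\bbZ_{\geq1}$, and Theorem~\ref{main} exhibits $\calE\calN(G,\chi_{\ul{k}})$ as a direct sum of summands of the types $I_\fini(\mu,\cdot)\otimes L(\ul{k})$, the trivial representation $\bbC$ (when $k=2$), the spaces $R(V)$ (when $k=1$), and --- only when $F=\bbQ$ and $k=2$ --- the module $\pi$. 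A summand $I_\fini(\mu,\cdot)\otimes L(\ul{k})$ is generated by $I_\fini(\mu,\cdot)\otimes\bbC v^+$, where $v^+$ is a highest weight vector of $L(\ul{k})$ and is $\frakp_-$-annihilated, so it equals its own holomorphic part; the trivial representation $\bbC$ is generated by the constant function; and $R(V)=\bigotimes_v R(V_v)$ with $R(V_v)$ the limit of holomorphic discrete series of weight one at each archimedean $v$ (as $V_v$ has signature $(2,0)$), hence is generated by its weight-$\ul{1}$, $\frakp_-$-annihilated vectors. Thus all of these contribute nothing; in particular $\calN(G)/\mathcal{H}(G)=0$ when $F\neq\bbQ$.

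It remains to treat $M=\pi$ in the case $F=\bbQ$, $k=2$, which is the crux. I would use the non-split extension $0\to\bbC\to\pi\xrightarrow{\,q\,}I_\fini(\mathbf{1},1)\otimes L(2)\to0$. Since $\pi$ has infinitesimal character $\chi_{\ul{2}}$, every $\frakp_-$-annihilated vector of $\pi$ has weight $\ul{0}$ or $\ul{2}$; those of weight $\ul{0}$ are all of $\bbC$, while $q$ identifies the weight-$\ul{2}$ subspace of $\pi$ with $I_\fini(\mathbf{1},1)\otimes\bbC v^+$ ($v^+$ a highest weight vector of $L(2)$), and $\frakp_-$ carries this subspace into the weight-$\ul{0}$ subspace $\bbC$ by a $G(\bbA_\fini)$-equivariant map which is nonzero precisely because the extension is non-split, hence is a nonzero multiple of the canonical projection $I_\fini(\mathbf{1},1)\to\mathrm{triv}_{G(\bbA_\fini)}$. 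Therefore $\pi^{\mathrm{hol}}$ is generated by $\bbC$ together with $\ker\big(I_\fini(\mathbf{1},1)\to\mathrm{triv}_{G(\bbA_\fini)}\big)\otimes\bbC v^+$, so $\pi^{\mathrm{hol}}=q^{-1}\big(\ker(I_\fini(\mathbf{1},1)\to\mathrm{triv}_{G(\bbA_\fini)})\otimes L(2)\big)$ and $\pi/\pi^{\mathrm{hol}}\cong\mathrm{triv}_{G(\bbA_\fini)}\otimes L(2)$. Assembling the contributions gives the corollary. The main obstacle is exactly this last step: pinning down the holomorphic vectors of $\pi$ and using non-splitness to recognize the induced map as the canonical projection --- the representation-theoretic reflection of the fact that $E_2$ is only quasi-holomorphic whereas $E_2(z)-NE_2(Nz)$ is a genuine holomorphic form of weight two.
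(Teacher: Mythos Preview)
Your proposal is correct and follows essentially the same strategy as the paper: decompose $\calN(G)$ by infinitesimal character, observe that every summand except the one carrying $E_{2,\bbA}$ is generated by highest weight (hence $\frakp_-$-annihilated) vectors, and then compute the quotient on that remaining piece. The paper's proof is terser: for $F\neq\bbQ$ it simply invokes Corollary~\ref{str_th_inf}, and for $F=\bbQ$ it lists the submodules $\calE\calN(G,\chi_k)$ $(k\neq 2)$, $\bigoplus_{\mu\neq\mathbf{1}}I(\mu,1)$, $M^{(v)}$, and $\bbC$ as highest-weight-generated and then defers to the proof of Theorem~\ref{main}(4), where the filtration $\tau\subset\calE\calN(G,\chi_2)$ with $\calE\calN(G,\chi_2)/\tau\cong\mathrm{triv}_{G(\bbA_\fini)}\otimes L(2)$ and the fact $\dim N_2(\SL_2(\bbZ))=1$ do the work.

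The one place where your argument adds content is the treatment of $\pi$: rather than quoting the concrete submodules $M^{(v)}$ and a dimension count, you work directly with the non-split extension $0\to\bbC\to\pi\to I_\fini(\mathbf{1},1)\otimes L(2)\to 0$, identify the $\frakp_-$-action on the weight-$2$ subspace with a nonzero $G(\bbA_\fini)$-map $I_\fini(\mathbf{1},1)\to\bbC$, and deduce $\pi^{\mathrm{hol}}=q^{-1}\bigl(\ker(I_\fini(\mathbf{1},1)\to\mathrm{triv})\otimes L(2)\bigr)$. This is cleaner and more self-contained; it is equivalent to the paper's computation because $\ker(I_\fini(\mathbf{1},1)\to\mathrm{triv})=\sum_{v<\infty}M^{(v)}_\fini$, so your $\pi^{\mathrm{hol}}$ is exactly $\bbC\oplus\bigoplus_v M^{(v)}=\tau\cap\pi$. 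The uniqueness of the $G(\bbA_\fini)$-map you use follows from $\mathrm{triv}_{G(\bbA_\fini)}$ being the unique irreducible quotient of $I_\fini(\mathbf{1},1)$, which is implicit in the local structure of $I_v(\mathbf{1}_v,1)$ recorded in Lemma~\ref{local}.
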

	
	In the rest of the paper, we prove the above statements.
	
	\subsection{Eisenstein series}
	We review briefly the theory of Eisenstein series on $\SL_2(\bbA_F)$.
	
	Fix a Hecke character $\mu$ of $F^\times F_{\infty,+}^\times \bs \bbA_F$.
	Let $K$ be a maximal compact subgroup of $G(\bbA)$ defined by
	\[
	K_v=\SL_2(\calO_{F_v}), \qquad K=\prod_{v<\infty}^{} K_v \times K_\infty.
	\]
	We say that a section $\Phi_s$ of $I(\mu,s)$ is a standard section if the restriction $\Phi_s|_K$ is independent of $s$.
	For a standard section $\Phi_s$, we define the Eisenstein series by
	\[
	E(g,s,\Phi) = \sum_{\gamma \in P_0(\bbQ) \bs G(\bbQ)} \Phi_s(\gamma g), \qquad g \in G(\bbA).
	\]
	By Langlands \cite{Langlands}, $E$ converges absolutely for $\mathrm{Re}(s) \gg 0$ and can be meromorphically continued to whole $s$-plane (see section \ref{sp_Eis_ser}).
	Moreover, the functional equation
	\[
	E(g,s,\Phi) = E(g,-s,M_s\Phi)
	\]
	holds.
	Here, for $\mathrm{Re}(s)\gg 0$, the intertwining operator $M_s$ is defined by
	\[
	M_s f(g) = \int_{N(\bbA_\bbQ)} f(wng) \, dn , \quad w=\begin{pmatrix} 0 & -1 \\ 1 & 0 \end{pmatrix}.
	\]
	Then, the constant term $E_0$ of $E$ is equal to
	\[
	E_0(g,s,\Phi_s) = \Phi_s(g) + M_s\Phi_s(g).
	\]
	By Langlands \cite{Langlands}, $E_0$ has a pole at $s=s_0$ if and only if $E$ has a pole at $s=s_0$.
	Moreover, $E_0$ is non-zero if and only if $E$ is non-zero.
	
	We say that a standard section $\Phi_s$ is factorisable if $\Phi_s = \otimes_{v \leq \infty} \Phi_{s,v}$.
	For a factorisable standard section $\otimes_v \Phi_{s,v}$, we have the decomposition
	\[
	M_s \Phi_s = \prod_v M_{s,v} \Phi_{s,v}.
	\]
	For a non-archimedean place $v$ and a local standard section $\Phi_{s,v}$ of $I_v(\mu_v,s)$, we say $\Phi_{s,v}$ is unramified if $\Phi_{s,v}$ is right $K_v$-invariant.
	Then, by \cite[Proposition 9.3.2]{casselman}, for an unramified character $\mu_v$ and an unramified section $\Phi_{s,v}$, we have
	\[
	M_{s,v} \Phi_{s,v}(1) = \frac{L_v(s,\mu_v)}{L_v(s+1,\mu_v)} \Phi_{s,v}(1).
	\]
	Here $L_v(s,\mu_v) = (1-\mu_v(\varpi_v)|\varpi_v|_v^{-s})^{-1}$ is the local Euler factor when $\mu_v$ is unramified and is $1$ if $\mu_v$ is ramified.
	For a finite set of places $S$ containing all archimedean places, put
	\[
	L^S(s,\mu) = \prod_{v \not\in S} L_{v}(s,\mu_v).
	\]
	Set
	\[
	M_{s,v}^* = \frac{1}{L_v(s,\mu_v)}M_{s,v}.
	\]
	Kudla and Rallis proved the following statement in \cite[Proposition 4.3]{1988_Kudla-Rallis} and \cite[Proposition 5.5]{1992_Kudla-Rallis}:
	
	\begin{prop}\label{normalization}
	Let $v$ be a non-archimedean place.
	With the above notation, we have the following:
	\begin{enumerate}
	\item The normalized intertwining operator $M^*_{s,v}$ is non-zero and entire, i.e., for any $s$, for any $s$, there exists a function $\Phi_s \in I_v(\mu_v,s)$ such that $M_{s,v} \Phi_s$ is non-zero and for any standard section $\Phi_s$, $M^*_{s,v} \Phi_{s}$ is entire.
	\item Suppose $\mu_v = \chi_{V_v}$ for some two-dimensional quadratic space $V_v$.
	Then at $s=0$, we have
	\[
	M_{0,v}(R(V_v)) = R(V_v).
	\]
	\end{enumerate}
	\end{prop}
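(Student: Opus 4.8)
The plan is to reduce both assertions to explicit computations for the rank‑one intertwining operator of $\SL_2(F_v)$ on a spanning family of standard sections; this is the content of \cite[Prop.~4.3]{1988_Kudla-Rallis} and \cite[Prop.~5.5]{1992_Kudla-Rallis}, and we indicate the shape of the argument.

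For (1), write $n(x) = \left(\begin{smallmatrix}1 & x\\ 0 & 1\end{smallmatrix}\right)$, so that $M_{s,v}\Phi_s(1) = \int_{F_v}\Phi_s(wn(x))\,dx$ for $\mathrm{Re}(s)\gg 0$. Since $w$ and $n(x)$ with $|x|\le 1$ lie in $K_v$ and $\Phi_s|_{K_v}$ is independent of $s$, the part of this integral over $|x|\le 1$ is constant in $s$; on $|x|>1$ one writes $wn(x)$ in the big Bruhat cell, $wn(x)\in N(F_v)M(F_v)K_v$ with $M$‑component $\mathrm{diag}(x^{-1},x)$ and $K_v$‑component tending to $1$ as $|x|\to\infty$, so this part is a geometric series, convergent for $\mathrm{Re}(s)\gg 0$, whose meromorphic continuation has as its only pole the simple pole of $L_v(s,\mu_v)$ --- the same series already visible on the spherical section in the formula recalled above (cf.~\cite[Prop.~9.3.2]{casselman}). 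Hence $M^*_{s,v} = L_v(s,\mu_v)^{-1}M_{s,v}$ is entire. For non‑vanishing, at the generic $s$ where $I_v(\mu_v,s)$ is irreducible Schur's lemma makes $M^*_{-s,v}\circ M^*_{s,v}$ a scalar, and by the Gindikin--Karpelevich functional equation this scalar is a nonzero constant, so $M^*_{s,v}$ is invertible there; at the finitely many reducibility points listed in Lemma \ref{local}(2) one checks directly --- by computing $M^*_{s,v}$ on a section generating the Steinberg (resp.~the finite‑dimensional) constituent --- that it is nonzero, its image being the corresponding submodule of $I_v(\mu_v,-s)$.

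For (2), put $\chi = \chi_{V_v}$; since the Weyl element sends $\chi$ to $\chi^{-1}=\chi$, the operator $M_{0,v}$ is an $\SL_2(F_v)$‑endomorphism of $I_v(\chi,0)$. Suppose first $\chi\neq\mathbf 1_v$, i.e.~$V_v$ is anisotropic. By Lemma \ref{local}(2), $I_v(\chi,0) = R(V_v^1)\oplus R(V_v^{-1})$ with the two summands irreducible; they are moreover mutually non‑isomorphic (e.g.~at most one carries a $\psi_v$‑Whittaker model, resp.~a spherical vector when $\chi$ is unramified; cf.~\cite{1992_Kudla-Rallis}), so $\mathrm{End}_{\SL_2(F_v)}(I_v(\chi,0))\cong\bbC\oplus\bbC$ is diagonal for this decomposition. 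Hence $M_{0,v}$ preserves each summand, acting on $R(V_v)$ by a scalar $c$. Now $M_{0,v}$ is a nonzero operator --- a nonzero multiple of the entire operator $M^*_{0,v}$ of part (1), since $L_v(0,\chi)$ is finite and nonzero for $\chi\neq\mathbf 1_v$ --- and the functional equation gives $M_{0,v}\circ M_{0,v} = c(0)\cdot\mathrm{id}$ with $c(0)$ a value of a local $\gamma$‑factor, finite and nonzero because $L_v(s,\chi)$ has neither zero nor pole at $s=0$; thus $c^2 = c(0)\neq 0$ and $M_{0,v}(R(V_v)) = R(V_v)$. (The precise value of $c$ --- in particular that its sign on $R(V_v^a)$ is $a$, the eigenspace description of Remark \ref{eigensp_int_op} --- comes from unwinding the theta section $T_0\colon\varphi\mapsto(g\mapsto(\omega_{\psi_v}(g)\varphi)(0))$ via $\omega_{\psi_v}(n(x))\varphi(y)=\psi_v(x(y,y))\varphi(y)$ and $\omega_{\psi_v}(w)\varphi=\gamma(V_v)\widehat\varphi$, the regularized integral over the quadric $(y,y)=0$ collapsing, for anisotropic $V_v$, to a nonzero multiple of evaluation at $0$, so that $M_{0,v}(T_0\varphi) = c\,T_0\varphi$.) If instead $\chi=\mathbf 1_v$, then $V_v$ is split, $I_v(\mathbf 1_v,0)$ is irreducible and equals $R(V_v)$, while $M_{s,v}$ has a simple pole at $s=0$; here $M_{0,v}$ is read as the leading Laurent coefficient (equivalently $M^*_{0,v}$), which is a nonzero scalar by Schur, and the assertion is immediate.

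The main obstacle is the analysis at $s=0$: the defining integral converges only for $\mathrm{Re}(s)>0$, so all the identities above hold a priori only after meromorphic continuation, and in (2) the interchange of integrals in the theta computation, as well as the very finiteness of $M_{s,v}$ at $s=0$, must be justified by working in the region of convergence and continuing (using $\int_{\mathcal{O}_{F_v}^\times}\chi(u)^{-1}\,du = 0$ when $\chi$ is ramified, and summing the geometric series when $\chi$ is unramified). The two genuinely delicate inputs are the non‑vanishing of $M^*_{s,v}$ at the reducibility points in (1), which rests on the submodule structure of Lemma \ref{local}, and the non‑vanishing (and correct sign) of the scalar $c$ --- equivalently of the relevant Weil index --- in (2); everything at generic $s$ is formal.
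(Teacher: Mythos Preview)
The paper does not supply its own proof of this proposition: it is stated with attribution, citing \cite[Proposition~4.3]{1988_Kudla-Rallis} and \cite[Proposition~5.5]{1992_Kudla-Rallis}, and is used as a black box in what follows. Your sketch is a faithful outline of those cited arguments --- the decomposition of the integral into $|x|\le 1$ and $|x|>1$ for (1), and for (2) the endomorphism argument on the multiplicity-free decomposition $I_v(\chi,0)=R(V_v^1)\oplus R(V_v^{-1})$ together with the theta computation giving the eigenvalue --- and is correct as such. The sign refinement you note parenthetically (that $M_{0,v}$ acts on $R(V_v^a)$ by $a\cdot\gamma(0,\chi_{V_v},\psi_v)$) is exactly what the paper records separately in Remark~\ref{eigensp_int_op}, citing \cite[Lemma~3.1]{ikeda}.

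One small caveat worth flagging: in the split case $\chi_{V_v}=\mathbf 1_v$, the unnormalized $M_{s,v}$ genuinely has a simple pole at $s=0$, so the statement $M_{0,v}(R(V_v))=R(V_v)$ must be read via the normalized operator $M^*_{0,v}$ (or the leading Laurent coefficient), as you do; the paper's phrasing is slightly loose here, but since in that case $R(V_v)=I_v(\mathbf 1_v,0)$ is irreducible the assertion is trivial under any reasonable interpretation.
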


	For an archimedean place $v$, an integer $\ell$ and $\mu_v=\mathrm{sgn}^{\ell}$, let $\Phi_\infty^{\ul{\ell}}$ be a non-zero standard section of weight $\ul{\ell}=(\ell,\ldots,\ell)\in\bbZ^d$ in $I_\infty (\mu_\infty , s) = \bigotimes_{v \in \bfa} I_v(\mu_v,s)$.
	The following Lemma is well-known.
	For the proof, see \cite[Theorem 2.4]{Kudla-Yang}.
	
	\begin{lem}\label{const_term_Eis}
	Let $\Phi_{s,\fini} \otimes \Phi_\infty^{\underline{\ell}} = \otimes_{v < \infty} \Phi_{s,v} \otimes \Phi_\infty^{\ul{\ell}}$ be a factorisable standard section for $\ell \in \bbZ$.
	Let $S$ be a finite set of places such that $\Phi_{s,v}$ is unramified for $v \not \in S$.
	Then, we have
	\[
	E_0(1,s,\Phi_s^{\ul{\ell}}) = \Phi_s^{\ul{\ell}}(1) + M_{s}\Phi_s^{\ul{\ell}}(1)
	\]
	\[
	=\Phi_s^{\ul{\ell}} (1) + \left(    \frac{\pi (- \sqrt{-1})^\ell 2^{1-s}\Gamma(s)}{\Gamma(\alpha)\Gamma(\beta)} \right)^d \frac{L^S(s,\mu)}{L^S(s+1,\mu)} \left(\prod _{v \in S \setminus \{\infty\}}M_{s,v} \Phi_{s,v}(1) \right) \left(\prod_{v \not\in S} \Phi_{s,v}(1)\right)\Phi_{\infty,-s}^{\ul{\ell}}(1) ,
	\]
	where $d=[F:\bbQ]$, $\alpha=(s+1+\ell)/2$ and $\beta = (s+1-\ell)/2$.
	\end{lem}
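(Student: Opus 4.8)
The plan is to deduce the formula from the general description of the constant term of an Eisenstein series along the Borel, which was recalled above: for any standard section $\Phi_s$ of $I(\mu,s)$ one has $E_0(g,s,\Phi_s) = \Phi_s(g) + M_s\Phi_s(g)$, so the first displayed equality is immediate and the whole content of the lemma is the evaluation of $M_s\Phi_s^{\ul\ell}$ at $g=1$. Since the section is factorisable, the intertwining operator factors as the product of the local ones, $M_s\Phi_s^{\ul\ell}(1) = \prod_{v\le\infty} M_{s,v}\Phi_{s,v}(1)$ for $\mathrm{Re}(s)\gg 0$ (with meromorphic continuation afterwards), and I would compute the three types of local factor separately.

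For a place $v\notin S$ the character $\mu_v$ is unramified and $\Phi_{s,v}$ is the normalised unramified section, so the Gindikin--Karpelevich computation of Casselman (\cite[Proposition 9.3.2]{casselman}, recalled above) gives $M_{s,v}\Phi_{s,v}(1) = \frac{L_v(s,\mu_v)}{L_v(s+1,\mu_v)}\Phi_{s,v}(1)$. Taking the product over all $v\notin S$ and using $L^S(s,\mu)=\prod_{v\notin S}L_v(s,\mu_v)$ yields the factor $\frac{L^S(s,\mu)}{L^S(s+1,\mu)}\prod_{v\notin S}\Phi_{s,v}(1)$. For the finite places $v\in S\setminus\{\infty\}$ nothing further is needed: the contribution is recorded as $\prod_{v\in S\setminus\{\infty\}} M_{s,v}\Phi_{s,v}(1)$, each factor being well defined and, after normalisation, entire by Proposition \ref{normalization}.

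The substantive point is the archimedean factor. For each $v\in\bfa$ one has $I_v(\mathrm{sgn}^\ell,s)\cong I(\mathrm{sgn}^\ell,s)$ for $\SL_2(\bbR)$, and on its weight-$\ell$ standard section the intertwining integral at $g=1$ is $\int_{\bbR}\Phi_{s,v}\!\left(w\begin{pmatrix}1 & x\\ 0 & 1\end{pmatrix}\right)dx$. Applying the Iwasawa decomposition of $w\begin{pmatrix}1 & x\\ 0 & 1\end{pmatrix}$ reduces this to a one-dimensional integral of the shape $\int_{\bbR}(1+x^2)^{-(s+1)/2}\left(\frac{x+\sqrt{-1}}{|x+\sqrt{-1}|}\right)^{\ell}dx$, a classical beta-type integral; a change of variables together with the duplication formula for $\Gamma$ evaluates it, after reinstating the normalisation of $\Phi_{\infty,-s}^{\ul\ell}$, to $\pi(-\sqrt{-1})^{\ell}2^{1-s}\Gamma(s)/(\Gamma(\alpha)\Gamma(\beta))\cdot\Phi_{\infty,-s}^{\ul\ell}(1)$ with $\alpha=(s+1+\ell)/2$ and $\beta=(s+1-\ell)/2$. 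This is exactly \cite[Theorem 2.4]{Kudla-Yang}, which I would simply invoke. Multiplying the $d$ archimedean factors gives the $d$-th power, and assembling the three contributions produces the stated formula.

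The main obstacle is not conceptual but one of normalisation bookkeeping: matching the additive character and Haar measure entering each $M_{s,v}$, fixing the normalisation of the weight-$\ul\ell$ standard section so that no stray constant contaminates the unramified product, and pinning down the precise power of $-\sqrt{-1}$ and the argument of the Gamma factor in the archimedean term. Since \cite{Kudla-Yang} works with the same conventions, I would align all normalisations with that reference and let it carry the archimedean evaluation.
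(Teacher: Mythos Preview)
Your proposal is correct and follows exactly the approach the paper takes: the paper does not give a proof but simply refers to \cite[Theorem 2.4]{Kudla-Yang}, which is the same archimedean computation you invoke after the standard Gindikin--Karpelevich step from \cite[Proposition 9.3.2]{casselman}. Your outline is in fact more detailed than what appears in the paper.
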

	
	Put
	\[
	\xi(s, \ell) = \left(    \frac{\pi (- \sqrt{-1})^\ell 2^{1-s}\Gamma(s)}{\Gamma(\alpha)\Gamma(\beta)} \right)^d
	\]
	where $\alpha=(s+1+\ell)/2$ and $\beta = (s+1-\ell)/2$.
	If the induced representation $I_\infty(\mu_\infty,s)$ has a non-zero $\frakp_-$-finite vector, one of the following conditions hold:
	\begin{itemize}
	\item $s \in 2 \bbZ_{\geq0}$ and $\mu_v = \mathrm{sgn}$ for any $v \in \bfa$.
	\item $s \in -1+2\bbZ_{\geq 0}$ and $\mu_v = \mathbf{1}_v$ for any $v \in \bfa$.
	\end{itemize}
	
	Take a standard factorisable section $\Phi_s$ of $I(\mu,s)$.
	First, we consider the case $s \in 2 \bbZ_{\geq0}$ and $\mu_v = \mathrm{sgn}$ for any $v \in \bfa$.
	In this case, we may assume a standard factorisable section $\Phi_s$ can be expressed as $\Phi_s = \Phi_{s,\fini} \otimes \Phi_{s,\infty}^{\ul{k}}$.
	Put $k=s+1$.
	Then, $k \geq 3$ or $k=1$.
	In this case, $\Phi_{s,\infty}^{\ul{k}}$ lies in $L(\ul{k}) \subset \bigotimes_{v \in \bfa}I(\mu_v,k-1)$.
	
	Clearly, we have the following statement:
	
	\begin{lem}\label{k>3_odd}
	Suppose $k \in \bbZ_{\geq 3}$ and $k$ is odd.
	For any factorisable standard section $\Phi^{\ul{k}}_s = \Phi_{s,\fini} \otimes \Phi_{s,\infty}^{\ul{k}}$, we have
	\[
	E_0 (g,s,\Phi^{\ul{k}}_s)|_{s=k-1} = \Phi^{\ul{k}}_{k-1}(g), \qquad g \in G(\bbA).
	\]
	\end{lem}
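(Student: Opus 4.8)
The plan is to read the statement off the explicit description of the constant term. By Langlands' theory \cite{Langlands} (recalled above), the constant term of the Eisenstein series is $E_0(g,s,\Phi_s)=\Phi_s(g)+M_s\Phi_s(g)$ for all $g\in G(\bbA)$, so Lemma \ref{k>3_odd} is equivalent to showing that the meromorphic family $s\mapsto M_s\Phi_s$ is holomorphic at $s=k-1$ and vanishes there. Since $\Phi_s=\Phi_{s,\fini}\otimes\Phi_{s,\infty}^{\ul{k}}$ is factorisable, $M_s\Phi_s=\bigotimes_v M_{s,v}\Phi_{s,v}$, and it suffices to analyse each local factor near $s=k-1$; observe that $k$ odd forces $\mu_v=\mathrm{sgn}^{k}=\mathrm{sgn}$ at every $v\in\bfa$, so in particular $\mu$ is a non-trivial Hecke character.

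First I would treat the archimedean places. Fix $v\in\bfa$. The weight-$\ul{k}$ $K_v$-isotypic line in the principal series $I_v(\mathrm{sgn},-s)$ is one-dimensional, so $M_{s,v}\Phi_{s,v}^{\ul{k}}=c_v(s,k)\,\Phi_{-s,v}^{\ul{k}}$ for a scalar $c_v(s,k)$ depending only on $s$; evaluating at $g=1$ and comparing with the archimedean factor of the formula in Lemma \ref{const_term_Eis} gives
\[
c_v(s,k)=\frac{\pi(-\sqrt{-1})^{k}\,2^{1-s}\,\Gamma(s)}{\Gamma\!\left(\tfrac{s+1+k}{2}\right)\Gamma\!\left(\tfrac{s+1-k}{2}\right)}.
\]
At $s=k-1$ the denominator Gamma factors become $\Gamma(k)$ and $\Gamma(0)$; since $k\ge 3$, both $\Gamma(s)=\Gamma(k-1)$ and $\Gamma(k)$ are finite and non-zero, while $z\mapsto 1/\Gamma(z)$ is entire with a simple zero at $z=0$. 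Hence $c_v(\cdot,k)$ is holomorphic near $s=k-1$ with $c_v(k-1,k)=0$, so the archimedean factor $M_{s,\infty}\Phi_{s,\infty}^{\ul{k}}=\bigotimes_{v\in\bfa}M_{s,v}\Phi_{s,v}^{\ul{k}}$ is holomorphic near $s=k-1$ and identically zero at $s=k-1$.

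Next I would check that the finite places contribute no pole at $s=k-1$ offsetting this zero. At every place $v$ where $\Phi_{s,v}$ is unramified, the computation of \cite[Proposition 9.3.2]{casselman} shows that the product of the corresponding local intertwining images equals $\dfrac{L^{S}(s,\mu)}{L^{S}(s+1,\mu)}\prod_{v\notin S}\Phi_{s,v}$ (with $\Phi_{s,v}$ now the standard unramified section of $I_v(\mu_v,-s)$); since $\mu$ is non-trivial and moreover $k-1\ge 2$ puts $s=k-1$ and $s+1=k$ in the region $\mathrm{Re}(s)>1$ of absolute convergence, both $L$-functions are holomorphic and non-zero there. For each ramified finite place $v\in S\setminus\{\infty\}$, Proposition \ref{normalization}(1) gives $M_{s,v}=L_v(s,\mu_v)M_{s,v}^{*}$ with $M_{s,v}^{*}$ entire, and $L_v(k-1,\mu_v)$ is finite because $|\mu_v(\varpi_v)|\,|\varpi_v|_v^{-(k-1)}=|\varpi_v|_v^{-(k-1)}<1$; hence $M_{s,v}\Phi_{s,v}$ is holomorphic near $s=k-1$. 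Combining the three pieces, $M_s\Phi_s$ is, near $s=k-1$, the product of a factor holomorphic there (the finite places and the global $L$-ratio) with the archimedean factor, hence holomorphic near $s=k-1$ and zero at $s=k-1$. Therefore $E_0(\cdot,s,\Phi_s^{\ul{k}})=\Phi_s^{\ul{k}}+M_s\Phi_s^{\ul{k}}$ is holomorphic at $s=k-1$ — whence, by Langlands, so is $E(\cdot,s,\Phi_s^{\ul{k}})$ — and $E_0(g,k-1,\Phi_{k-1}^{\ul{k}})=\Phi_{k-1}^{\ul{k}}(g)$ for all $g$, as claimed.

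There is no serious obstacle here; the statement is essentially bookkeeping, which is why the lemma is asserted without proof. The one point that genuinely requires attention is the previous paragraph: one must rule out that the simple zero produced by $1/\Gamma(0)$ at the archimedean places is cancelled by a pole of some other local intertwining factor or of the global $L$-ratio, and this is exactly where the hypotheses enter — $k\ge 3$ keeps $\Gamma(s)=\Gamma(k-1)$ regular and non-zero, $k$ odd makes $\mu$ non-trivial (so $L^{S}(s,\mu)$ is entire, excluding the pole at $s=1$ that would appear for $k=2$), and $k\ge 3$ again keeps $s=k-1$ in the half-plane $\mathrm{Re}(s)>1$ where $L^{S}(s,\mu)$ and $L^{S}(s+1,\mu)$ are holomorphic.
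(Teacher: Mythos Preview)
Your argument is correct and is essentially the paper's proof: both show that the archimedean factor $\xi(s,k)$ (your $\prod_{v\in\bfa}c_v(s,k)$) vanishes at $s=k-1$ because of the $1/\Gamma(0)$, while the remaining finite-place and $L$-function factors stay finite there, so $M_s\Phi_s|_{s=k-1}=0$; the paper does this at $g=1$ via Lemma~\ref{const_term_Eis} and then invokes the Iwasawa decomposition, whereas you work at all $g$ directly through the factorisation, but the substance is the same. One minor slip: $|\varpi_v|_v^{-(k-1)}=q_v^{\,k-1}>1$, not $<1$, but the conclusion that $L_v(k-1,\mu_v)$ is finite is unaffected since the relevant modulus is still $\neq 1$.
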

	\begin{proof}
	The factor $\xi(s, k)$ is zero at $s=k-1$ and the other factor
	\[
	\frac{L^S(s,\mu)}{L^S(s+1,\mu)} \left(\prod _{v \in S \setminus \{\infty\}}M_{s,v} \Phi_{s,v}(1) \right) \Phi_{\infty,-s}^{\ul{k}}(1)
	\]
	is finite.
	Hence, by Lemma \ref{const_term_Eis}, we have
	\[
	E_0 (1,s,\Phi^{\ul{k}}_s)|_{s=k-1} = \Phi^{\ul{k}}_{k-1}(1).
	\]
	By the Iwasawa decomposition $P_0(\bbA_\bbQ)K=G(\bbA_\bbQ)$, we have $E_0 (g,s,\Phi^{\ul{k}}_s)|_{s=k-1} = \Phi^{\ul{k}}_{k-1}(g)$ for any $g \in G(\bbA)$.
	This completes the proof.
	\end{proof}
	
	Put $k=1$.
	Suppose $\mu^2=\mathbf{1}$ with $\mu_v = \mathrm{sgn}$ for any $v \in \bfa$.
	This case corresponds to Theorem \ref{KR}.
	
	\begin{lem}\label{realization}
	Let $V$ be a two-dimensional quadratic space over $F$ and $\Phi_s$ be a standard section in $I(s,\chi_V)$ such that $\Phi_0 \in R(V)$.
	The constant term $E(g,0,\Phi)$ is equal to $\Phi_0$.
	Hence the Eisenstein series gives the realization
	\[
	R(V) \longrightarrow \calA(G) \colon \Phi_0 \longmapsto E (\,\cdot\,,1,\Phi).
	\]
	\end{lem}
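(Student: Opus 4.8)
The plan is to deduce everything from the shape of the constant term. Since (by Langlands, as recalled above) the Eisenstein series $E(g,s,\Phi)$ and its constant term $E_0(g,s,\Phi)=\Phi_s(g)+M_s\Phi_s(g)$ have the same poles, and $E$ vanishes identically exactly when $E_0$ does, it suffices to show that for $\Phi_0\in R(V)$ the function $E_0(g,s,\Phi)$ is holomorphic at $s=0$ and that the extra term $M_s\Phi_s$ vanishes there, so that $E_0(g,0,\Phi)=\Phi_0(g)$. This identity is then all that is needed: the right-hand side is nonzero whenever $\Phi_0\neq 0$, so $E(\,\cdot\,,0,\Phi)\neq 0$, and $\Phi_0\mapsto E(\,\cdot\,,0,\Phi)$ is $G(\bbA_\fini)\times(\frakg,K_\infty)$-equivariant because the Eisenstein summation is; hence it realises $R(V)$ inside $\calA(G)$, recovering also Theorem \ref{KR}(2).

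For the holomorphy I would reduce by linearity to a factorisable standard section $\Phi_s=\otimes_v\Phi_{s,v}$ with $\Phi_{0,v}\in R(V_v)$ at each $v$, write $M_{s,v}=L_v(s,\chi_{V_v})M^*_{s,v}$, and invoke Proposition \ref{normalization}(1) to see that each normalised local operator $M^*_{s,v}$ is entire. Then the only possible pole of $M_s\Phi_s$ near $s=0$ is carried by the global factor $L^S(s,\chi_V)/L^S(s+1,\chi_V)$ together with the archimedean $\Gamma$-factor $\xi(s,\ell)$ of Lemma \ref{const_term_Eis}, where $\ell$ is the weight forced on the archimedean section by $R(V_v)$. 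The numerator $L^S(s,\chi_V)$ is holomorphic at $s=0$; the denominator $L^S(s+1,\chi_V)$ is nonzero there (non-vanishing of $L(1,\chi_V)$ when $\chi_V\neq\mathbf{1}$, and a pole of $\zeta_F(s+1)$ when $\chi_V=\mathbf{1}$, which only helps); and the explicit $\xi(s,\ell)$ is finite at $s=0$ for that $\ell$. Hence $E_0$, and therefore $E$, is holomorphic at $s=0$.

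The heart of the proof is then the evaluation $E_0(g,0,\Phi)=\Phi_0(g)$, that is, the vanishing of $M_0\Phi_0$. By Proposition \ref{normalization}(2) the local operator $M_{0,v}$ carries $R(V_v)$ into itself, hence acts on the irreducible space $R(V_v)$ by a scalar $\eta_v\in\{\pm1\}$ (here Lemma \ref{local} provides the irreducibility); the characterisation of $R(V)$ as an eigenspace of the normalised intertwining operator (Remark \ref{eigensp_int_op}), combined with the product formula for the Hilbert symbols of $F$ — which is precisely the statement that the collection $\{V_v\}$ is coherent, i.e.\ comes from the global quadratic space $V$ — pins down the product $\prod_v\eta_v$. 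Feeding this, together with the values at $s=0$ of $L^S(s,\chi_V)/L^S(s+1,\chi_V)$ and of the archimedean $\Gamma$-factor (Lemma \ref{const_term_Eis}, Lemma \ref{local}, with $R(V_v)=R(2,0)$ the weight-one holomorphic limit of discrete series at the real places), into the constant-term formula shows that the $M_s\Phi_s$-term is zero at $s=0$, so $E_0(g,0,\Phi)=\Phi_0(g)$. I expect this last step to be the main obstacle: one must track the normalised intertwining operator through the centre $s=0$ of the critical strip and see the local eigenvalues $\eta_v$ collapse, via the Hasse-invariant product formula, to make the extra term disappear. (Running the same computation for an incoherent collection $\calC$ yields $E_0(g,0,\Phi)=0$ instead, which re-proves Theorem \ref{KR}(1).) Everything outside this step — the reduction to the constant term, the holomorphy, and the equivariance — is either formal or routine bookkeeping with Hecke $L$-functions and one short $\Gamma$-function evaluation.
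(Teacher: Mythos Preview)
Your proposal contains a genuine error at the ``heart of the proof'': you assert that the $M_s\Phi_s$-term vanishes at $s=0$, i.e.\ that $M_0\Phi_0=0$. This is false, and in fact contradicts both Proposition~\ref{normalization}(2) (which says $M_{0,v}(R(V_v))=R(V_v)$, a nonzero space) and Remark~\ref{eigensp_int_op} (which identifies $R(V)$ with the $+1$-eigenspace of $M_0$). Your own ingredients give the correct answer: the local eigenvalues $\eta_v\in\{\pm1\}$ satisfy $\prod_v\eta_v=+1$ precisely because the family $\{V_v\}$ is coherent, so $M_0\Phi_0=+\Phi_0$, not $0$. Consequently the constant term is
\[
E_0(g,0,\Phi)=\Phi_0(g)+M_0\Phi_0(g)=2\,\Phi_0(g),
\]
which is exactly what the paper proves (by a one-line citation of \cite[Proposition~7.1]{1988_Kudla-Rallis}). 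The lemma as stated contains a harmless typo (``equal to $\Phi_0$'' should read ``equal to $2\Phi_0$''), and you seem to have taken the typo at face value and then tried to force the computation to match it. Your parenthetical remark about the incoherent case is correct: there the sign flips, $M_0\Phi_0=-\Phi_0$, and $E_0=0$; but the coherent case yields $2\Phi_0$, not $\Phi_0$.

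None of this damages the actual goal of the lemma: since $2\Phi_0\neq 0$ whenever $\Phi_0\neq 0$, the map $\Phi_0\mapsto E(\,\cdot\,,0,\Phi)$ is still injective and equivariant, hence realises $R(V)\hookrightarrow\calA(G)$. So your overall strategy (reduce to the constant term, identify the eigenvalue of $M_0$ on $R(V)$, conclude nonvanishing) is sound and is essentially the paper's argument unwound through Remark~\ref{eigensp_int_op}; the paper just short-circuits it by quoting Kudla--Rallis directly for $M_0\Phi_0=\Phi_0$. Fix the sign and drop the claim that the second term vanishes, and your write-up is fine.
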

	\begin{proof}
	By \cite[Proposition 7.1]{1988_Kudla-Rallis}, we have
	\[
	M_{0} \Phi_0 = \Phi_0.
	\]
	This shows that the constant term of the Eisenstein series $E(g,s,\Phi)$ is equal to
	\[
	\Phi_0 + M_{w,0}\Phi_0 = 2 \Phi_0
	\]
	at $s=0$.
	This completes the proof.
	\end{proof}
	
	\begin{rem}\label{eigensp_int_op}
	For a two-dimensional quadratic space $V$, the composition of the intertwining operator $M_{0} \circ M_0$ acts on $I(0,\chi_V)$ as the identity map by \cite[Theorem 1 (iii)]{Arthur}.
	Hence the induced representation $I(0,\chi_V)$ decomposes as a direct sum of $+1$-eigenspace $I(0,\chi_V)^+$ and $-1$-eigenspace $I(0,\chi_V)^-$.
	By Theorem \ref{KR}, the space $R(\calC)$ is contained in $I(0,\chi_V)^-$ for an incoherent family $\calC$.
	Indeed, if $\Phi \in R(\calC)$ lies in $I(0,\chi_V)^+$, the Eisenstein series $E(\, \cdot \, , s, \Phi)$ is non-zero at $s=0$.
	Hence we have the realization $R(\calC) \longmapsto \calA(G)$.
	This contradicts to Theorem \ref{KR}.
	Conversely, the space $R(V)$ is contained in $I(0,\chi_V)^+$.
	Hence we have
	\[
	I(0,\chi_V)^+ = \bigoplus_W R(W), \qquad I(0,\chi_V)^- = \bigoplus_\calC R(\calC),
	\]
	where $W$ runs through all two-dimensional quadratic spaces over $F$ such that $\chi_W = \chi_V$ and $\calC = \{W_v\}$ runs through all incoherent families such that $\chi_{W_v} = \chi_V$.
	This is a key fact to show the Siegel-Weil formula.
	Moreover, the similar statement holds for local induced representations.
	For a place $v$ of $F$, let $V^\varepsilon$ be a two-dimensional quadratic space over $F_v$ such that $\varepsilon(V^\varepsilon) = \varepsilon$.
	If $v$ is non-archimedean, we can show that the representation $R(V^{\varepsilon})$ is the $\varepsilon \gamma(0,\chi_{V^\varepsilon},\psi_v)$-eigenspace of the local intertwining operator $M_{w,v,0}$ by \cite[Lemma 3.1]{ikeda}.
	Here $\gamma(s,\chi_{V^\varepsilon},\psi_v)$ is equal to
	\[
	\gamma(s,\chi_{V^\varepsilon},\psi_v) = \varepsilon(s,\chi_{V^\varepsilon},\psi_v) \frac{L(1-s,\chi_{V^\varepsilon}^{-1})}{L(s,\chi_{V^\varepsilon})}
	\]
	and $\varepsilon(s,\chi_{V^\varepsilon},\psi_v)$ is the local epsilon factor associated to $\psi_v$.
	Note that the eigenvalue $\varepsilon \gamma(0,\chi_{V_v},\psi_v)$ is depending on the definition of the Weil representation associated to $\psi$.
	Of course, the same statement for an archimedean case holds by the straightforward computations.
	\end{rem}
	
	For a non-quadratic Hecke character $\mu$, by the functional equation, we have the following:
	
	\begin{lem}\label{k=1_non-quad}
	Suppose $\mu^2 \neq \mathbf{1}$.
	Let $\Phi_s$ be a standard section of $I(\mu,s)$.
	Then we have
	\[
	E(g,0,\Phi) = E(g,0,M_s \Phi), \qquad g \in G(\bbA_F).
	\]
	\end{lem}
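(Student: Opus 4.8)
The strategy is to obtain the identity by specialising the functional equation $E(g,s,\Phi)=E(g,-s,M_s\Phi)$ at $s=0$. The only real point is to check that both $E(g,s,\Phi)$ and the family of sections $M_s\Phi$ are holomorphic at $s=0$; granting this, evaluating the functional equation at $s=0$ gives exactly $E(g,0,\Phi)=E(g,0,M_s\Phi)$, and this is precisely where the hypothesis $\mu^2\neq\mathbf{1}$ enters.

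I would first reduce the holomorphy of $E(g,s,\Phi)$ at $s=0$ to that of its constant term. By Langlands' theory recalled above, $E$ and $E_0(g,s,\Phi)=\Phi_s(g)+M_s\Phi_s(g)$ have the same poles; since $\Phi_s$ is a standard section, $\Phi_s(g)$ is entire in $s$, so it suffices to show $M_s\Phi_s$ is holomorphic at $s=0$. By linearity we may assume $\Phi_s=\otimes_v\Phi_{s,v}$ is factorisable, so $M_s\Phi_s=\prod_v M_{s,v}\Phi_{s,v}$, and by the Iwasawa decomposition $G(\bbA)=P_0(\bbA)K$ it is enough to control the value at $g=1$, which Lemma \ref{const_term_Eis} writes as the product of an archimedean factor $\xi(s,\ell)$, the ratio $L^S(s,\mu)/L^S(s+1,\mu)$, and finitely many local contributions $M_{s,v}\Phi_{s,v}(1)$ at the ramified finite places together with values of the unramified sections. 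At a ramified finite place $M_{s,v}=L_v(s,\mu_v)M^*_{s,v}$ with $M^*_{s,v}$ entire by Proposition \ref{normalization}, and the Euler factor $L_v(s,\mu_v)$ has no pole on $\mathrm{Re}(s)=0$, so these cause no trouble at $s=0$. Since $\mu^2\neq\mathbf{1}$ forces $\mu\neq\mathbf{1}$, the partial Hecke $L$-function $L^S(s,\mu)$ is entire; moreover $\mu$ is a character of the compact group $F^\times F_{\infty,+}^\times\bs\bbA_F^\times$, hence unitary (indeed of finite order), so the classical non-vanishing of Hecke $L$-functions on $\mathrm{Re}(s)=1$ yields $L^S(1,\mu)\neq 0$, and therefore $L^S(s,\mu)/L^S(s+1,\mu)$ is holomorphic at $s=0$. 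Finally the archimedean factor is finite at $s=0$: when $\mu_v=\mathrm{sgn}$ this is immediate from the explicit shape of $\xi(s,\ell)$ with the relevant odd $\ell$, while when $\mu_v=\mathbf{1}_v$ the pole of $\xi$ at $s=0$ is cancelled by the corresponding zero of $\Phi_{\infty,-s}^{\ul{\ell}}(1)$ — equivalently, one repackages the whole product as a ratio of completed Hecke $L$-functions $\Lambda(s,\mu)/\Lambda(s+1,\mu)$, which is holomorphic at $s=0$ because $\Lambda(s,\mu)$ is entire and $\Lambda(1,\mu)\neq 0$ for $\mu$ unitary and non-trivial. Combining these, $M_s\Phi_s$, hence $E_0(g,s,\Phi)$, hence $E(g,s,\Phi)$, is holomorphic at $s=0$; the same argument shows $M_s\Phi$ is a holomorphic family of sections there, and specialising the functional equation at $s=0$ proves the lemma.

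The step I expect to be the main obstacle is the archimedean bookkeeping: the factor $\xi(s,\ell)$ of Lemma \ref{const_term_Eis} is only finite at $s=0$ after being paired with the corresponding local section (or after rewriting the entire constant term as a ratio of completed $L$-functions), so the argument genuinely needs the archimedean normalized intertwining operator to be holomorphic at the centre of the strip. The other input that must be quoted rather than reproved is the non-vanishing $L^S(1,\mu)\neq 0$ for a non-trivial unitary Hecke character; everything else is formal manipulation of the constant-term formula together with Proposition \ref{normalization}.
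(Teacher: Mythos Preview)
Your approach is exactly what the paper has in mind: the paper states only ``by the functional equation'' and proves nothing further, so you are supplying the holomorphy check at $s=0$ that makes the specialization legitimate, and your use of the constant-term formula, Proposition~\ref{normalization}, and the analytic properties of $L^S(s,\mu)$ for $\mu\neq\mathbf{1}$ is the right way to do it.

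One small correction: your claim that when $\mu_v=\mathbf{1}_v$ the pole of $\xi(s,\ell)$ is cancelled by a zero of $\Phi_{\infty,-s}^{\ul{\ell}}(1)$ is not correct, because a standard section restricted to $K$ is independent of $s$ and $1\in K$, so $\Phi_{\infty,-s}^{\ul{\ell}}(1)$ is constant in $s$. Your alternative formulation, absorbing the archimedean gamma factors into a completed ratio $\Lambda(s,\mu)/\Lambda(s+1,\mu)$, is the right fix and does give holomorphy at $s=0$ (equivalently, the archimedean normalized operator $M_{s,v}/L_v(s,\mu_v)$ is holomorphic there). Note also that in the paper's only application of this lemma one has $\mu_v=\mathrm{sgn}$ at every archimedean place, so $\ell$ is odd and $\xi(0,\ell)$ is already finite; the case you were worried about never arises.
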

	
	Next, we assume $s \in -1+2\bbZ_{\geq 0}$ and $\mu_v = \mathbf{1}_v$ for any $v \in \bfa$.
	The case of $k = s+1 \geq 4$ is similar to Lemma \ref{k>3_odd}.
	
	\begin{lem}\label{k>3_even}
	Suppose $k \in \bbZ_{\geq 3}$ and $k$ is even.
	For any factorisable standard section $\Phi^{\ul{k}}_s = \Phi_{s,\fini} \otimes \Phi_{s,\infty}^{\ul{k}}$, we have
	\[
	E_0 (g,s,\Phi^{\ul{k}}_s)|_{s=k-1} = \Phi^{\ul{k}}_{k-1}(g), \qquad g \in G(\bbA).
	\]
	\end{lem}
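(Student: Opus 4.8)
The plan is to mirror the proof of Lemma \ref{k>3_odd}, the only change being that the archimedean character is now $\mathbf{1}_v$ instead of $\mathrm{sgn}$. Applying Lemma \ref{const_term_Eis} with $\ell = k$, the constant term at the identity is
\[
E_0(1,s,\Phi^{\ul{k}}_s) = \Phi^{\ul{k}}_s(1) + \xi(s,k)\,\frac{L^S(s,\mu)}{L^S(s+1,\mu)}\left(\prod_{v\in S\setminus\{\infty\}}M_{s,v}\Phi_{s,v}(1)\right)\left(\prod_{v\notin S}\Phi_{s,v}(1)\right)\Phi^{\ul{k}}_{\infty,-s}(1),
\]
where $S$ is a finite set of places outside of which $\Phi_{s,v}$ and $\mu_v$ are unramified; so it suffices to show that the second summand vanishes at $s = k-1$.

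First I would examine $\xi(s,k)$ at $s = k-1$. With $\alpha = (s+1+k)/2$ and $\beta = (s+1-k)/2$ one has $\alpha = k$ and $\beta = 0$ at $s = k-1$; since $k \geq 3$, the factors $\Gamma(s) = \Gamma(k-1)$ and $\Gamma(\alpha) = \Gamma(k)$ are finite and non-zero, while $\Gamma(\beta) = \Gamma(0)$ has a pole, so $\xi(s,k)$ has a (simple) zero at $s = k-1$. Next I would check that the remaining factors are holomorphic there. The ratio $L^S(s,\mu)/L^S(s+1,\mu)$ is holomorphic at $s = k-1$: if $\mu \neq \mathbf{1}$ its partial Hecke $L$-function is entire, and if $\mu = \mathbf{1}$ the only pole of $L^S(s,\mathbf{1})$ sits at $s=1$, which is avoided since $s = k-1 \geq 2$ and $s+1 = k \geq 3$ — this is exactly where the hypothesis $k \geq 3$ enters, ruling out interference with the pole of the (partial) Dedekind zeta function. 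For $v \in S$ finite, each value $M_{s,v}\Phi_{s,v}(1) = L_v(s,\mu_v)\,M^*_{s,v}\Phi_{s,v}(1)$ is finite at $s = k-1$, since $M^*_{s,v}$ is entire by Proposition \ref{normalization} and $L_v(s,\mu_v) = (1-\mu_v(\varpi_v)|\varpi_v|_v^{-s})^{-1}$ has no pole for $\mathrm{Re}(s)\geq 2$; the finitely many remaining section values $\Phi_{s,v}(1)$ and $\Phi^{\ul{k}}_{\infty,-s}(1)$ are finite. Hence the second summand vanishes at $s = k-1$, giving $E_0(1,s,\Phi^{\ul{k}}_s)|_{s=k-1} = \Phi^{\ul{k}}_{k-1}(1)$.

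Finally I would pass from $g = 1$ to general $g \in G(\bbA)$. Factoring $M_s\Phi^{\ul{k}}_s = \big(\prod_{v<\infty} M_{s,v}\Phi_{s,v}\big)\,M_{s,\infty}\Phi^{\ul{k}}_{s,\infty}$ with $M_{s,\infty}\Phi^{\ul{k}}_{s,\infty} = \xi(s,k)\,\Phi^{\ul{k}}_{\infty,-s}$, the same estimates show that every factor is finite at $s = k-1$ (for the unramified places the product of local values over $v\notin S$ again collapses to $L^S(s,\mu)/L^S(s+1,\mu)$), while $\xi(k-1,k)=0$; hence $M_s\Phi^{\ul{k}}_s$ vanishes identically at $s = k-1$ — equivalently, using $G(\bbA) = P_0(\bbA)K$ together with the left $P_0(\bbA)$-equivariance of $M_s\Phi^{\ul{k}}_s$ and its determination by its restriction to $K$. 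Since $E_0(g,s,\Phi) = \Phi_s(g) + M_s\Phi_s(g)$, this yields $E_0(g,s,\Phi^{\ul{k}}_s)|_{s=k-1} = \Phi^{\ul{k}}_{k-1}(g)$ for all $g$. The computation is entirely routine; the one point requiring a moment's care — and the only genuine difference from the odd case of Lemma \ref{k>3_odd} — is precisely the one highlighted above, namely that for even $k \geq 3$ the evaluation point $s = k-1$ stays safely to the right of the pole of the Dedekind zeta function, so that the zero of $\xi(s,k)$ is not cancelled.
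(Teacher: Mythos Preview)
Your proof is correct and follows exactly the approach the paper intends: the paper gives no separate argument for this lemma, merely remarking that the case $k=s+1\geq 4$ is similar to Lemma~\ref{k>3_odd}. You have carried out that similar argument in full, and in fact been more careful than the paper by isolating the one genuine difference from the odd case --- namely that now $\mu$ may be the trivial Hecke character, so one must check that $s=k-1\geq 3$ avoids the pole of the partial Dedekind zeta function.
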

	
	We assume $s=1$.
	In this case, $k$ is equal to $2$.
	
	\begin{lem}\label{k=2}
	For any factorisable standard section $\Phi_s^{\ul{2}} = \Phi_{s,\fini} \otimes \Phi_{s,\infty}^{\ul{2}}$, we have
	\[
	E_0 (1,s,\Phi^{\ul{2}})|_{s=1} =\Phi_{1}^{\ul{2}}(1) - \frac{3}{\pi} \Phi_{-1}^{\ul{2}}(1)  
	\]
	if $F = \bbQ$, $\mu=\mathbf{1}$ and $S=\{\infty\}$.
	We also have
	\[
	E_0 (1,s,\Phi^{\ul{2}})|_{s=1} =\Phi_{1}^{\ul{2}}(1)
	\]
	if one of the following condition holds:
	\begin{itemize}
	\item $F \neq \bbQ$.
	\item $\Phi_{1,v} \in \mathrm{St}_v$ for some non-archimedean place $v$.
	\item $\mu \neq \mathbf{1}$.
	\end{itemize}
	\end{lem}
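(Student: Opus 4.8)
The plan is to start from the explicit constant-term formula of Lemma~\ref{const_term_Eis} with $\ell = 2$,
\[
E_0(1,s,\Phi_s^{\ul{2}}) = \Phi_s^{\ul{2}}(1) + \xi(s,2)\,\frac{L^S(s,\mu)}{L^S(s+1,\mu)}\left(\prod_{v \in S \setminus \{\infty\}} M_{s,v}\Phi_{s,v}(1)\right)\left(\prod_{v \notin S}\Phi_{s,v}(1)\right)\Phi_{\infty,-s}^{\ul{2}}(1),
\]
and to read off the behaviour of each factor at $s=1$. Since $\Phi_s^{\ul{2}}$ is a standard section and $1 \in K$, the quantities $\Phi_s^{\ul{2}}(1)$, $\prod_{v\notin S}\Phi_{s,v}(1)$ and $\Phi_{\infty,-s}^{\ul{2}}(1)$ do not depend on $s$, the first being $\Phi_1^{\ul{2}}(1)$. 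With $\alpha = (s+3)/2$ and $\beta = (s-1)/2$, the quantities $\Gamma(s)$, $\Gamma(\alpha) = \Gamma(2)$, $2^{1-s}$ and $(-\sqrt{-1})^2 = -1$ are finite and nonzero at $s=1$ while $\Gamma(\beta)^{-1}$ vanishes simply there, so $\xi(s,2) = \bigl(-\tfrac{\pi}{2}(s-1)\bigr)^{d}\bigl(1 + O(s-1)\bigr)$ has a zero of order $d = [F:\bbQ]$ at $s=1$. The ratio $L^S(s,\mu)/L^S(s+1,\mu)$ is finite and nonzero at $s=1$ when $\mu \neq \mathbf{1}$, and equals $\zeta_F^S(s)/\zeta_F^S(s+1)$, with a simple pole at $s=1$, when $\mu = \mathbf{1}$; and by Proposition~\ref{normalization} the normalized operators are entire and the local $L$-factors have no pole at $s=1$, so each $M_{s,v}\Phi_{s,v}(1)$ with $v \in S\setminus\{\infty\}$ is holomorphic at $s=1$. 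In particular the second summand is holomorphic at $s=1$, so evaluation there is legitimate.

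Next I would settle the three cases in which $E_0(1,s,\Phi^{\ul{2}}_s)|_{s=1} = \Phi_1^{\ul{2}}(1)$ by showing that the second summand vanishes at $s=1$. If $\mu \neq \mathbf{1}$, the $L$-ratio and the finite-place integrals are finite at $s=1$, so the second summand has a zero of order $\geq d \geq 1$. If $F \neq \bbQ$, then even for $\mu = \mathbf{1}$ the order-$d$ zero of $\xi(s,2)$ outweighs the simple pole of $\zeta_F^S(s)$, leaving a zero of order $\geq d-1 \geq 1$. If $\Phi_{1,v} \in \mathrm{St}_v$ for some non-archimedean $v$, I would first note that $v \in S$ (the Steinberg representation has no $K_v$-fixed vector) and that $\mu_v = \mathbf{1}_v$ (by Lemma~\ref{local}(2) this is the only way $I_v(\mu_v,1)$ can contain $\mathrm{St}_v$); since $M_{s,v}$ is holomorphic at $s=1$ and carries $I_v(\mathbf{1}_v,1)$ into $I_v(\mathbf{1}_v,-1)$, the image of the submodule $\mathrm{St}_v$ under $M_{1,v}$ is simultaneously a submodule of $I_v(\mathbf{1}_v,-1)$ and a quotient of the irreducible $\mathrm{St}_v$, hence is $0$ because $I_v(\mathbf{1}_v,-1)$ has $\mathrm{St}_v$ only as a quotient; thus $M_{s,v}\Phi_{s,v}(1)$ vanishes at $s=1$, and together with the order-$d$ zero of $\xi(s,2)$ and the at-worst simple pole of $L^S(s,\mu)$ the second summand again has a zero of order $\geq d \geq 1$.

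Finally, for $F = \bbQ$, $\mu = \mathbf{1}$, $S = \{\infty\}$ I would compute the residual value directly. Here $d = 1$, the product over $v \in S\setminus\{\infty\}$ is empty, and $\bigl(\prod_{v\notin S}\Phi_{s,v}(1)\bigr)\Phi_{\infty,-s}^{\ul{2}}(1) = \Phi_{-1}^{\ul{2}}(1)$. Using $\xi(s,2) = -\tfrac{\pi}{2}(s-1)(1+O(s-1))$, $\zeta(s) = (s-1)^{-1}(1+O(s-1))$ and $\zeta(s+1) \to \zeta(2) = \pi^2/6$, one gets $\lim_{s\to1}\xi(s,2)\,\zeta(s)/\zeta(s+1) = -\tfrac{\pi}{2}\cdot\tfrac{6}{\pi^2} = -\tfrac{3}{\pi}$, so the second summand tends to $-\tfrac{3}{\pi}\Phi_{-1}^{\ul{2}}(1)$ and $E_0(1,s,\Phi^{\ul{2}}_s)|_{s=1} = \Phi_1^{\ul{2}}(1) - \tfrac{3}{\pi}\Phi_{-1}^{\ul{2}}(1)$.

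The main obstacle is the careful bookkeeping of orders of vanishing and of poles, particularly in the $\mu = \mathbf{1}$ and Steinberg cases, where one must be sure the order-$d$ zero of $\xi(s,2)$ genuinely dominates the simple pole of $\zeta_F^S(s)$; the Steinberg sub-case additionally rests on the precise local submodule structure of $I_v(\mathbf{1}_v,\pm1)$ from Lemma~\ref{local} and on the holomorphy of the local intertwining operator at $s=1$ from Proposition~\ref{normalization}.
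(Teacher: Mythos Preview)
Your proof is correct and follows essentially the same approach as the paper's: analyze the order of vanishing of $\xi(s,2)$ at $s=1$ against the possible pole of the remaining factors in the constant-term formula, then compute the residue in the one case where they exactly cancel. You supply more detail than the paper does (the explicit Taylor expansion of $\xi(s,2)$, the submodule argument showing $M_{1,v}$ kills $\mathrm{St}_v$, and the residue computation via $\zeta(2)=\pi^2/6$), but the strategy and the case division are identical.
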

	
	\begin{proof}
	The factor
	\[
	\frac{L^S(s,\mu)}{L^S(s+1,\mu)} \left(\prod _{v \in S \setminus \{\infty\}}M_{s,v} \Phi_{s,v}(1) \right) \Phi_{\infty,-s}^{\ul{2}}(1)
	\]
	has at most simple pole at $s=1$.
	Suppose $F \neq \bbQ$.
	In this case, the factor $\xi(s,2)$ has a zero of order $d > 1$ at $s=1$.
	Hence we have
	\[
	E_0 (1,s,\Phi_s^{\ul{2}})|_{s=1} =\Phi_{1}^{\ul{2}}(1).
	\]
	Suppose $F = \bbQ$, $\mu=\mathbf{1}$ and $S=\{\infty\}$.
	Then ${L^S(s,\mu)}/{L^S(s+1,\mu)}$ has a simple pole at $s=1$ and $\pi (-\sqrt{-1})^2 2^{1-s}\Gamma(s)/\Gamma(\alpha)\Gamma(\beta)$ has a simple zero at $s=1$.
	By computing the residue, we have
	\[
	E_0 (1,s,\Phi_s^{\ul{2}})|_{s=1} =\Phi_{1}^{\ul{2}}(1) - \frac{3}{\pi} \Phi^{\ul{2}}_{-1}(1).
	\] 
	Suppose $F=\bbQ$ and $\Phi_{s,v}$ belongs to the Steinberg representation $\mathrm{St}_v$ at $s=1$.
	For such a place $v$, we have $M_{w,s,v}\Phi_{s,v}=0$ at $s=1$ by Lemma \ref{local}.
	Hence the lemma follows.
	Finally suppose $\mu \neq 1$.
	In this case, since the factor $\xi(s,2)$ has a simple zero at $s=1$ and the $L$-function $L^S (s,\mu)/L^S (s+1,\mu)$ does not have a pole at $s=1$, the assertion is clear.
	This completes the proof.
	\end{proof}
	
	
	\subsection{Siegel-Weil formula}
	Before the proof of Theorem \ref{main}, we refer to the Siegel-Weil formula.
	Let $V$ be a quadratic space over $F$ with dimension $m$.
	Suppose $m$ is even.
	We then obtain the Weil representation $\omega_\psi$ on the Schwartz-Bruhat space $\calS(V(\bbA))$ where $\psi$ is the fixed additive character of $F \bs \bbA_F$ (cf.~\S \ref{ind_rep_theta_corr}).
	Then we have a map
	\[
	\omega_\psi \longrightarrow I(\chi_V, m/2 - 1) \colon \varphi \longmapsto (g \longmapsto \omega_\psi(g)\varphi(0)).
	\]
	For $\varphi \in \calS(V)$, let $\Phi_{\varphi,s}$ be a standard section of $I(\chi_V,m/2-1)$ such that
	\[
	\Phi_{\varphi,m/2-1} (g) = \omega_\psi(g)\varphi(0).
	\]
	We get the Eisenstein series $E(\, \cdot \, , s, \Phi_\varphi)$.
	
	Set $H = \mathrm{O}(V)$.
	Put
	\[
	\theta(g,h;\varphi) = \sum_{x \in V(F)} \omega_\psi(g)\varphi(h^{-1} x), \qquad \varphi \in \omega_\psi, \, g \in \SL_2(\bbA_F), \, h \in H(\bbA_F).
	\]
	The Siegel-Weil formula for anisotropic case states the relation between the Eisenstein series $E(\, \cdot \, , s, \Phi_\varphi)$ and the theta function $\theta(g,h;\varphi)$:
	
	\begin{thm}[\cite{1994_Kudla-Rallis}]
	Suppose $V$ is anisotropic.
	Then for any $\varphi \in \omega_\psi$, we have
	\[
	E(g,m/2-1,\Phi_\varphi) = \kappa \int_{H(F) \bs H(\bbA)} \theta(g,h;\varphi) \, dh, \qquad g \in \SL_2(\bbA_F).
	\]
	Here 
	\[
	\kappa = 
	\begin{cases}
	1 &\text{if $m > 2$}\\
	2 &\text{if $m=2$}.
	\end{cases}
	\]
	\end{thm}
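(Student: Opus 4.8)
The plan is to follow Weil's classical argument in the range of absolute convergence and to supplement it with the intertwining-operator analysis of Kudla and Rallis at the remaining special points. Since $V$ is anisotropic, the orthogonal group $H=\mathrm{O}(V)$ is anisotropic over $F$, so $H(F)\bs H(\bbA_F)$ is compact; hence the theta integral $I(g,\varphi)=\int_{H(F)\bs H(\bbA_F)}\theta(g,h;\varphi)\,dh$ converges absolutely and defines an automorphic form on $\SL_2(\bbA_F)$, and it lies in the same $\calZ$-eigenspace as $E(g,s_0,\Phi_\varphi)$, where $s_0=m/2-1$. (For anisotropic $V$ one also knows $E(g,s,\Phi_\varphi)$ is holomorphic at $s_0$, so the left-hand side is defined.) Both sides are automorphic forms of ``Eisenstein type'', so it suffices to prove that their constant terms along the unique proper standard parabolic $B=MN$ of $\SL_2$ coincide: the difference will then be a cusp form lying in the span of the Eisenstein and theta data, which forces it to vanish.

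First I would compute the constant term along $N$. On the theta side, writing $n=n(b)$ with $b\in F\bs\bbA_F$, one has $\theta(n(b)g,h;\varphi)=\sum_{x\in V(F)}\psi(b\,Q(x))\,(\omega_\psi(g)\varphi)(h^{-1}x)$, and $\int_{F\bs\bbA_F}\psi(b\,Q(x))\,db$ kills every term with $Q(x)\neq 0$; by anisotropy of $V$ only $x=0$ survives, so $\int_{N(F)\bs N(\bbA_F)}\theta(ng,h;\varphi)\,dn=(\omega_\psi(g)\varphi)(0)$, independent of $h$, whence (normalizing $\mathrm{vol}(H(F)\bs H(\bbA_F))=1$) the theta constant term is $(\omega_\psi(g)\varphi)(0)=\Phi_{\varphi,s_0}(g)$. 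On the Eisenstein side the standard formula gives $E_0(g,s_0,\Phi_\varphi)=\Phi_{\varphi,s_0}(g)+M_{s_0}\Phi_{\varphi,s_0}(g)$. Hence the desired identity reduces to $M_{s_0}\Phi_{\varphi,s_0}=(\kappa-1)\,\Phi_{\varphi,s_0}$. When $m=2$, so $s_0=0$ and $\kappa=2$, this says $M_0\Phi_{\varphi,0}=\Phi_{\varphi,0}$ with $\Phi_{\varphi,0}\in R(V)$, which is exactly the identity used in Lemma \ref{realization} (\cite[Proposition 7.1]{1988_Kudla-Rallis}); the factor $\kappa=2$ thus comes from the two Bruhat-cell contributions to the constant term being equal rather than one vanishing. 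When $m>2$, so $\kappa=1$, one needs $M_{s_0}\Phi_{\varphi,s_0}=0$; in the absolutely convergent range $m>4$ this drops out of Weil's direct unfolding $E(g,s_0,\Phi_\varphi)=\sum_{\gamma\in B(F)\bs\SL_2(F)}(\omega_\psi(\gamma g)\varphi)(0)$ together with Poisson summation, and in general it follows from the Kudla--Rallis analysis of $M_s$ on the image of the Weil representation (the relevant local normalized intertwining operators carry no residual term here).

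Finally I would deduce the identity of the forms from the equality of constant terms. Let $\mathcal D(g)=E(g,s_0,\Phi_\varphi)-\kappa\,I(g,\varphi)$; by the above $\mathcal D$ has vanishing constant term along $B$, hence is a cusp form. Pairing $\mathcal D$ against an arbitrary cusp form $f$ on $\SL_2(\bbA_F)$: the contribution of $E(\,\cdot\,,s_0,\Phi_\varphi)$ vanishes because Eisenstein series (and their values at points of holomorphy) are orthogonal to cusp forms, while $\int_{\SL_2(F)\bs\SL_2(\bbA_F)}\overline{f(g)}\,I(g,\varphi)\,dg=\int_{H(F)\bs H(\bbA_F)}\Theta_\varphi(\bar f)(h)\,dh$ is the $H$-period of the theta lift of $\bar f$, which vanishes by the Rallis inner product argument (the obstruction to its nonvanishing is an $L$-value/local condition incompatible with $f$ being cuspidal of this type). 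Since $\mathcal D$ is both cuspidal and orthogonal to all cusp forms, $\mathcal D=0$. I expect the main obstacle to lie in the small cases $m\in\{2,3,4\}$, where $s_0$ sits on or inside the critical strip so that $E(g,s_0,\Phi_\varphi)$ is only accessible through meromorphic continuation (and its holomorphy at $s_0$ must itself be established), and especially in pinning down $M_0\Phi_{\varphi,0}$ for $m=2$, whose value $\Phi_{\varphi,0}$ rather than $0$ is the source of the anomalous constant $\kappa=2$; the Rallis inner product input in the last step is likewise a substantive ingredient rather than a formality.
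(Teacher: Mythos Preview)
The paper does not prove this theorem; it is simply quoted from \cite{1994_Kudla-Rallis} as external input, with no argument given. (The only instance actually used later is $m=2$, where the relevant fact $M_0\Phi_{\varphi,0}=\Phi_{\varphi,0}$ on $R(V)$ reappears in Lemma~\ref{realization}.) So there is no proof in the paper to compare against; your outline is a reasonable reconstruction of the constant-term strategy behind the Kudla--Rallis result, and your $m=2$ analysis matches how the paper uses that case.

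Two points in your sketch are genuine gaps rather than routine details. First, for $m>2$ you reduce to $M_{s_0}\Phi_{\varphi,s_0}=0$, but at unramified places the local operator acts by $L_v(s_0,\chi_V)/L_v(s_0+1,\chi_V)\neq 0$, and the global product can even have a pole at $s_0$ (e.g.\ $m=4$, $\chi_V=\mathbf{1}$, $s_0=1$); the vanishing must be extracted from the ramified/archimedean places where $V_v$ is anisotropic, and that is an honest local computation you have not carried out. (Note also that over a number field anisotropic $V$ forces $m\leq 4$, so only $m\in\{2,4\}$ are live.) Second, your final step asserts $\langle f,I(\cdot,\varphi)\rangle=0$ for every cusp form $f$, i.e.\ $\int_{H(F)\backslash H(\bbA)}\theta_\varphi(\bar f)(h)\,dh=0$; this says the theta lift of any cusp form is orthogonal to the trivial representation of $H$, which is a nontrivial input from the theta correspondence (roughly Howe duality: the trivial representation of $H$ matches a non-cuspidal representation of $\SL_2$), not something the Rallis inner product formula alone delivers. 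In the original Kudla--Rallis argument this issue is circumvented by comparing \emph{all} Fourier coefficients, not just the constant term, so that no orthogonality-to-cusp-forms step is needed.
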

	
	
	For an archimedean place $v$, the image of the local Weil representation $\omega_{\psi_v}$ is the (limit of) discrete series representation of $\SL_2(\bbR)$ of weight $m/2$ if $V_v$ is positive definite.
	Suppose $V_v$ is positive definite for any $v \in \bfa$.
	Then $V$ is anisotropic.
	Hence the certain space of nearly holomorphic Eisenstein series is isomorphic to the space of theta integrals.
	
	\subsection{Proof of Theorem \ref{main}}
	By Lemma \ref{int_par}, we have (1).
	We first treat the case $k \geq 3$.
	By Lemma \ref{local}, we may assume that $s = k-1$ and $\mu_v=\mathrm{sgn}^k$ for any $v \in \bfa$.
	If $k \in \bbZ_{\geq 1}$, the space of $\frakp_-$-finite vectors in $\mathrm{Ind}_{B(\bbA)}^{G(\bbA)}(\mu |\cdot|^{k-1})$ is of the form
	\[
	I_\fini (\mu,k-1) \otimes L(\ul{k}).
	\]
	This is irreducible by the irreducibility of local induced representations.
	Hence, we obtain a map
	\begin{align}\label{k=3_BG}
	\calN(G,\chi_{\ul{k}}) \longrightarrow \bigoplus_{\mu \in \mathfrak{X}_{(-1)^k}} \left(I_\fini (\mu_v,k-1) \otimes L({\ul{k}}) \right) \colon \varphi \longmapsto \varphi_0.
	\end{align}
	
	\begin{proof}[Proof of Theorem \ref{main} (2)]
	By (\ref{k=3_BG}), we have an injective map
	\[
	\calE\calN(G,\chi_{\ul{k}}) \xhookrightarrow{\quad} \bigoplus_{\mu \in \mathfrak{X}_{(-1)^k}} \left(I_\fini (\mu_v,k-1) \otimes L({\ul{k}}) \right).
	\]
	Take a function $\Phi$ in the right hand side.
	Let $\Phi_s$ be a standard section such that $\Phi_{k-1} = \Phi$.
	Then by Lemma \ref{k>3_odd} and Lemma \ref{k>3_even}, for the standard section $\Phi_s$, the constant term of $E(\,\cdot\,,k-1,\Phi)$ is equal to $\Phi_{k-1}$.
	Hence we obtain the inverse map.
	This completes the proof.
	\end{proof}
	
	Next we treat the case of $k=1$.

	\begin{proof}[Proof of Theorem \ref{main} (3)]
	Fix a congruence subgroup $\Gamma$ of $\SL_2(F)$.
	We denote by $\calE_1(\Gamma)$ the orthogonal complement of $S_{\ul{1}}(\Gamma)$ in $M_{\ul{1}}(\Gamma)$.
	By (\ref{AF_vs_MF}), the space $\calE_1(\Gamma)$ corresponds to the space of holomorphic $K_\Gamma$-fixed automorphic forms on $G(\bbA)$ of weight $1$.
	Here $K_\Gamma$ is the closure of $\Gamma$ in $G(\bbA_\fini)$.
	
	For a Hecke character $\mu \in \mathfrak{X}_{-1}$, let $\calE(0)$ be the $\bbC$-vector space spanned by the Eisenstein series $E(\, \cdot \, ,0,\Phi)$ for a standard section $\Phi_s$ of $I(s,\mu)$.
	Note that such the Eisenstein series $E(g,s,\Phi)$ is finite at $s=0$ by Lemma \ref{normalization}.
	Let $\calE(0,K_{\Gamma})$ be the subspace of $\calE(0)$ consisting of $K_\Gamma$-fixed Eisenstein series.
	By \cite[Theorem 8.3]{85_shimura}, we have the isomorphism $\calE(0,K_\Gamma) \cong \calE_1(\Gamma)$ as a $\bbC$-vector space by (\ref{AF_vs_MF}).
	We obtain the correspondence
	\[
	\bigcup_\Gamma \calE_1(\Gamma) \cong \{ \varphi \in \calE\calN(G,\chi_1) \mid Y_v \cdot \varphi = 0 \; \text{for all $v \in  \bfa$}\}
	\]
	by the isomorphism (\ref{AF_vs_MF}), where $\Gamma$ runs through all congruence subgroups of $\SL_2(F)$ and $Y_v$ is a root vector of weight $-2$ in $\mathfrak{sl}_2(F_v) \otimes \bbC$.
	This shows that any automorphic form in $\calE\calN(G,\chi_1)$ is a finite sum of Eisenstein series.
	Since any automorphic form in $\calE\calN(G,\chi_{\bf{1}})$ generates a highest weight module as a $(\frakg,K_\infty)$-module, we then have the isomorphism
	\[
	\calE\calN(G,\chi_{\ul{1}}) \cong\bigoplus_{\mu \in \mathfrak{X}_{-1}/\sim} \{(\Phi_s + M_s\Phi_s)|_{s=0} \mid \Phi \in I_\fini(\mu,0) \otimes D_{\ul{1}}\} \oplus \bigoplus_{V}R(V)
	\]
	as a $G(\bbA_\fini) \times (\frakg,K_\infty)$-module by the constant term, by Lemma \ref{KR} and Lemma \ref{k=1_non-quad}.
	For a non-quadratic character $\mu$, the space $\{\Phi_0 + M_0 \Phi_0 \mid \Phi_0 \in I_\fini(\mu,0)\otimes D_{\ul{1}}\}$ is isomorphic to $I(\mu,0)$ as $G(\bbA_\fini)\times(\frakg,K_\infty)$-module.
	Indeed, the map $\Phi \longmapsto \Phi + M_0\Phi$ is injective and intertwining for $\Phi \in I_\fini(\mu,0) \otimes D_{\ul{1}}$ with a non-quadratic Hecke character $\mu$.
	Note that by Theorem \ref{KR}, for any incoherent family $\calC$, the representation $R(\calC)$ does not occur in $\calE\calN(G)$.
	This completes the proof.
	\end{proof}
	
	Finally we proof the case of $k=2$.
	
	\begin{proof}[Proof of Theorem \ref{main} (4)]
	The proof for the case of $F \neq \bbQ$ is similar to the proof for Theorem \ref{main} (2).
	
	Suppose $F=\bbQ$.
	For a non-archimedean place $v$, put
	\[
	M^{(v)} = \bigotimes_{p < \infty, p \neq v} I_p(\mathbf{1}_p,1) \otimes \mathrm{St}_v \otimes L(2) \subset I(\mathbf{1},1).
	\]
	By Lemma \ref{k=2}, the representation $M^{(v)}$ occur in $\calE\calN(G,\chi_2)$.
	Similarly, the induced representation $I(\mu,1)$ and the trivial representation $\bbC$ occur in $\calE\calN(G,\chi_2)$ if $\mu \neq \mathbf{1}$.
	We show that the subrepresentation 
	\begin{align*}\label{sub_rep_k=2}
	\tau = \bbC \oplus \bigoplus_{v < \infty} M^{(v)} \oplus \bigoplus_{\mu \in \mathfrak{X}_1} I(\mu,1) 
	\end{align*}
	of $\calE\calN(G,\chi_2)$ is a maximal subrepresentation of it.
	By Proposition \ref{const_term_NHAF} and Lemma \ref{local} (1), $\calE\calN(G,\chi_2)$ can be identified with a subrepresentation of the subspace of all $\frakp_-$-finite vectors in $\bigoplus_{\mu \in \mathfrak{X}_1}(I(\mu,1) \oplus I(\mu,-1))$.
	Such a space of all $\frakp_-$-finite vectors is equal to the following space:
	\[
	\bigoplus_{\mu \in \mathfrak{X}_1} \left(I_\fini(\mu,1) \otimes L(2) \oplus I_\fini (\mu,-1) \otimes N(0)^\vee\right).
	\]
	Here $N(0)^\vee$ is the contragredient representation of the Verma module $N(0)$ (see \S \ref{BGG_cat}).
	Take a nearly holomorphic automorphic form $\varphi \in \calE\calN(G,\chi_2)$ of weight $2$.
	Then there exist $f \in \bigoplus_{\mu \in \mathfrak{X}_1} I_\fini(\mu,1) \otimes L(2) $ and $g \in \bigoplus_{\mu \in \mathfrak{X}_1} I_\fini (\mu,-1) \otimes N(0)^\vee$ such that $\varphi _0 = f + g$.
	For a root vector $Y \in \mathfrak{sl}_2(\bbC)$ of weight $-2$, we have $Y \cdot f = 0$.
	Then $Y \cdot \varphi$ is of weight $0$ and hence $Y \cdot \varphi$ is a constant function.
	Since constant functions generates the trivial representation, $(Y \cdot \varphi)_0 = Y \cdot \varphi_0$ belongs to $I(\mathbf{1},-1)$.
	This shows that $g$ lies in $I_\fini(\mathbf{1},-1) \otimes N(0)^\vee$.
	Summarizing that, the representation $\calE\calN(G,\chi_2)$ may be identified with a subrepresentation of 
	\[
	\Pi = I_\fini (\mathbf{1},-1) \otimes N(0)^\vee \oplus \bigoplus_{\mu \in \mathfrak{X}_1} \left(I_\fini(\mu,1) \otimes L(2) \right).
	\]
	We now obtain the filtration
	\[
	\tau \subset \calE\calN(G,\chi_2)  \subset \Pi.
	\]
	Then the quotient space $\Pi / \tau$ is isomorphic to
	\begin{align}\label{pi/tau}
	\mathrm{triv}_{G(\bbA_\fini)} \otimes L(2) \oplus \mathrm{triv}_{G(\bbA_\fini)} \otimes L(2).
	\end{align}
	Since $\tau$ has no unramified vector of weight $2$ and $\calE\calN(G,\chi_2)$ has an unramified vector of weight $2$, the quotient $\calE\calN(G,\chi_2) /\tau$ is non-zero.
	Since the representation (\ref{pi/tau}) has length $2$, $\calE\calN(G,\chi_2)/\tau$ is isomorphic to the representation (\ref{pi/tau}) or an irreducible representation of the form $\mathrm{triv}_{G(\bbA_\fini)} \otimes L(2)$.
	Note that the dimension of unramified vectors in (\ref{pi/tau}) of weight $2$ is $2$.
	If $\calE\calN(G,\chi_2)/\tau$ is isomorphic to the representation (\ref{pi/tau}), this contradicts to $\dim N_2(\SL_2(\bbZ)) = 1$.
	Hence $\calE\calN(G,\chi_2)/ \tau$ is irreducible and is generated by $E_{2,\bbA}$.
	Since $\pi$ contains $\bbC$ and $M^{(v)}$ for any $v < \infty$, we have
	\[
	\calE\calN(G,\chi_2) \cong \left( \bigoplus_{\begin{smallmatrix}\mu \in \mathfrak{X}_1 \\ \mu \neq \mathbf{1} \end{smallmatrix}} I_\fini(\mu,1)\otimes L(2) \right) \oplus \pi. 
	\]
	This completes the proof.
	\end{proof}
	
	This completes the proof of Theorem \ref{main}.
	
	
	
	\subsection{proof of Corollary \ref{cor_main_2}}
	
	Suppose $F \neq \bbQ$.
	In this case, since any $(\frakg,K_\infty)$-submodule of $\calN(G)$ is highest weight representation, we have the equality $\calN(G) = \mathcal{H}(G)$.

	Suppose $F=\bbQ$.
	By Theorem \ref{main}, the following representations are generated by highest weight vectors:
	\[
	\calE\calN(G,\chi_k), \qquad \bigoplus_{\mu \in \mathfrak{X}_1, \mu \neq \mathbf{1}} I(\mu,1) , \qquad M^{(v)}, \qquad \bbC
	\]
	for $k \in \bbZ_{\geq 1}$ with $k \neq 2$ and all non-archimedean place $v$.
	By the proof of Theorem \ref{main} (4), the assertion follows.

	\appendix
	\section{The space of Eisenstein series}\label{sp_Eis_ser}
	In this appendix, we conjecture the relationship between Eisenstein series and nearly holomorphic automorphic forms.
	
	We keep the notation of \S \ref{def_NHMF}.
	Fix a minimal parabolic $\bbQ$-subgroup $P_0$ of $G$ and a Levi decomposition $P_0=M_0N_0$.
	We define the Weyl group $W=W_G$ by $\mathrm{Norm}_G(M_0) / M_0$.
	Let $P=MN$ be a standard parabolic subgroup of $G$ with $M$ the standard Levi subgroup.	
	We regard $\fraka_M^* = \Hom(A_M^\infty,\bbC^\times) \cong \Hom(M(\bbA)/M^1(\bbA),\bbC^\times)$ as the set of characters of $M(\bbA)$.
	Let $(\fraka_M^G)^* = \Hom(A_M^\infty / A_G^\infty,\bbC^\times)$.
	We regard $\lambda \in (\fraka_M^G)^*$ as a character of $M(\bbA)$.
	The modulus character $\rho_P$ of $P(\bbA)$ can be regarded as an element of $(\fraka_M^G)^*$.


	For a standard Levi subgroup $M$, we set
\[
	W(M) = \left\{ 
	w \in W \,\middle| \,
	\begin{array}{ll}
	\text{$\bullet$ $wMw^{-1}$ is a standard Levi subgroup of $G$} 
	\\
	\text{$\bullet$ $w$ has a minimal length in $wW_M$}
	\end{array}
	\right\}.
	\]
	Recall that two standard parabolic subgroups $P=MN$ and $P'=M'N'$ are associated if there exists $w\in W(M)$ such that $M'=wMw^{-1}$.
	If $P'$ and $P$ are associated we write $P\sim P'$.
	Take $w\in W(M)$.
	Put $M'=wMw^{-1}$ and let $P'=M'N'$ be the standard parabolic subgroup with Levi subgroup $M'$.

	Let $\tau$ be an irreducible cuspidal automorphic representation of $M(\bbA)$.
	We tacitly assume that the central character $\chi_\tau$ is trivial on $A_G^\infty$.
	Recall that a cuspidal datum is a pair $(M, \tau)$ such that $M$ is a Levi subgroup of $G$ and that $\tau$ is an irreducible cuspidal automorphic representation.
	Let $w\in W(M)$. 
	The irreducible cuspidal automorphic representation $\tau^w$ of $M'(\bbA)$ is defined by $\tau^w(m')=\tau(w^{-1}m'w)$ for $m'\in M'(\bbA)$.
	Similarly, for $\lambda\in (\fraka_M^G)^*$, $\lambda^w\in (\fraka_M^G)^*\longmapsto (\fraka_{M'}^G)^*$ is defined by $\lambda^w(m')=\lambda(w^{-1}m'w)$ for $m\in M'(\bbA)$.
	A cuspidal datum $(M,\tau)$ is called regular if we have $\tau^w \not \cong \tau$ for any $1\neq w \in W$ with $wMw^{-1} =M$.
	Two cuspidal data $(M, \tau)$ and $(M', \tau')$ are called equivalent if there exists $w\in W(M)$ such that $M'=wMw^{-1}$ and that $\tau'=\tau^w$.

	Let $\mathcal{A}(G)_{(M, \tau)}$ is the subspace of automorphic forms in $\mathcal{A}(A \bs G)$ with cuspidal support $(M, \tau)$.
	The space $\mathcal{A}(A \bs G)$ is decomposed as
\[
\mathcal{A}(A \bs G)=\bigoplus_{(M, \tau)} \mathcal{A}(A \bs G)_{(M, \tau)}.
\]
Here, $(M, \tau)$ runs through all equivalence classes of cuspidal datum. 
(See \cite{MW}, III.2.6, \cite{FS} Theorem 1.4).
Put $\calN(A \bs G)_{(M, \tau)}=\calN(A \bs G)\cap \mathcal{A}(A \bs G)_{(M, \tau)}$.
	\begin{quest}
	When $\calN(A \bs G)_{(M, \tau)}\neq 0$ ?
	\end{quest}

	Let $\calA_{\mathrm{cusp}}(A \bs G)$ be the space of cusp forms on $G(\bbQ) \bs G(\bbA)$.
	The $\tau$-isotypic subspace of the space $\mathcal{A}_{\mathrm{cusp}}(M(\bbA))$ of cusp form on $M(\bbQ)\backslash M(\bbA)$ is denoted by $\mathcal{A}_{\mathrm{cusp}}(M(\bbA))_\tau$.
	Let $I_P^G(\tau)$ be the space which consists of all smooth functions $f : M(\bbQ) N(\bbA) \backslash G(\bbA) \longrightarrow \bbC$ such that the following conditions (1) and (2) hold:
	\begin{itemize}
	\item[(1)] $f$ is right $K$-finite.
	\item[(2)] for any $k \in K$, a function on $M$ defined by $m \longmapsto m^{-\rho_P}f(mk)$ belongs to $\mathcal{A}_{\mathrm{cusp}}(M(\bbA))_\tau$.
	\end{itemize}
	Put $\mathcal{I}(\tau)=\mathrm{Ind}_{P(\bbA)\cap K}^{K}(\mathcal{A}_{\mathrm{cusp}}(M(\bbA))_\tau)$.
	Then we have an isomorphism $I_P^G(\tau)\simeq \mathcal{I}(\tau)$.
	For $f \in I_P^G(\tau)$ and $\lambda \in (\fraka_M^G)^*$, put
	\[
	f_\lambda (mnk) = m^\lambda f(mk), \qquad m \in M(\bbA),\, n \in N(\bbA),\, k \in K.
	\]
	Then, we have $f_\lambda \in I_P^G(\tau\otimes\lambda)$.
	Then we have a $\bbC$-vector space  isomorphism $\mathcal{I}_P^G(\tau\otimes\lambda)\simeq \mathcal{I}(\tau)$ for any $\lambda$.
	Then, the product $\mathcal{I}(\tau)\times  (\fraka_M^G)^*$ can be considered as a fiber bundle over $ (\fraka_M^G)^*$ such that the fiber at $\lambda\in  (\fraka_M^G)^*$ is $I_P^G(\tau \otimes \lambda)$.
	We denote this fiber bundle by 
	\[
\mathcal{F}_{ (\fraka_M^G)^*}  \mathcal{I}(\tau)=
	\coprod_{\lambda \in (\fraka_M^G)^*} I_P^G(\tau \otimes \lambda).
	\]
	For an irreducible representation $\sigma$ of $K$, let  $\mathcal{I}(\tau)_\sigma$ be the space of functions $f \in \mathcal{I}(\tau)_\sigma$ with $K$-type $\sigma$.
	Then $\mathcal{I}(\tau)_\sigma$ is a finite-dimensional $\bbC$-vector space.
	The space $\mathcal{I}(\tau)$ is endowed with the topology induced from the algebraic direct sum
	\[
	\mathcal{I}(\tau) = \bigoplus_{\sigma} \mathcal{I}(\tau)_\sigma.
	\]
	The fiber bundle $\mathcal{F}_{ (\fraka_M^G)^*}  \mathcal{I}(\tau)$ is endowed with the topology induced from the product topology of  $\mathcal{I}(\tau)\times (\fraka_M^G)^*$.
	For $f \in I_P^G(\tau)$, the function of the form $\lambda \longmapsto f_\lambda$ is called a standard section of $\mathcal{F}_{ (\fraka_M^G)^*}  \mathcal{I}(\tau)$.
	

	Let $P=MN$ be a standard parabolic subgroup of $G$ and $(M,\tau)$ a cuspidal datum.
	For any $f \in \mathcal{I}_P^G(\tau)$, we define the Eisenstein series $E(g,\lambda,f)$ by
	\[
	E(g,\lambda,f) = \sum_ {\gamma \in P(\bbQ) \backslash G(\bbQ)} f_\lambda (\gamma g), \qquad g \in G(\bbA),\, \lambda \in (\fraka_M^G)^*.
	\]
	Due to Langlands' theory \cite{Langlands}, the Eisenstein series $E(g,\lambda,f)$ is absolutely convergent on some open set of $(\fraka_M^G)^*$ and it can be continued to a meromorphic function on the entire space $(\fraka_M^G)^*$.
	The intertwining operator 
	\[
	M(w,\tau\otimes \lambda) : I_P^G(\tau \otimes \lambda) \longrightarrow I_{P'}^G(\tau^w\otimes\lambda^w)
	\]
	is defined by the integral
	\[
	M(w,\tau\otimes\lambda)f(g) = \int_{N'(\bbA) \cap wN(\bbA)w^{-1} \backslash N'(\bbA)} f(w^{-1}ng) \, dn,
\qquad 
f\in \mathcal{I}_P^G(\tau\otimes\lambda)
	\]
	as long as the integral converges.
	It is well-known that $M(w,\tau\otimes\lambda)$ converges absolutely on some open subset of $(\fraka_M^G)^*$ and it can be continued to a meromorphic function on the entire space $(\fraka_M^G)^*$ (see \cite[Proposition II.1.6]{MW}).
	The set of singular points $\frakX_w$ of  $M(w,\tau\otimes\lambda)$ is contained in a locally finite union of hyperplanes in $(\fraka_M^G)^*$.
	If necessary, we enlarge $\frakX_w$ so that the intertwining operators $M(w,\tau\otimes\lambda)$ are isomorphism for any $\lambda \not \in \frakX_w$. 
	We may also assume that the cuspidal datum $(M,\tau\otimes\lambda)$ is regular for any $\lambda \in \frakX_w$.
	Note that the set of poles of the Eisenstein series is contained in $\frakX = \bigcup_{w\in W(M)} \frakX_w$.

	Let $P'=M'N'$ be a standard parabolic subgroup such that $P'\sim P$.
	Put 
\[
W(M,M') = \{w \in W(M) \mid \text{$wMw^{-1}=M'$}\}.
\]
	By \cite[Proposition II.1.7]{MW}, the constant term $E_{P'}$ of $E(\,\cdot\,,\lambda,f)$ along a standard parabolic subgroup $P'=M'N'$ is equal to
	\begin{align*}
	E_{P'}(g,\lambda,f) &= \int_{N'(\bbQ)\backslash N'(\bbA)} E(ng,\lambda,f) \, dn  \\
						&= \sum_{w\in W(M,M')} M(w,\tau\otimes\lambda) f(g),
	\end{align*}
	for $\lambda \not\in \frakX$.
	Suppose that $P''=M''N''\sim P'=M'N'\sim P=MN$.
	Then we have a functional equation
	\[
	M(w',\tau^w\otimes\lambda^w) \circ M(w,\tau\otimes\lambda) = M(w'w,\tau\otimes\lambda),
	\]
for $w\in W(M, M')$,  $w'\in W(M', M'')$.

	Now we consider the fiber bundle $\mathcal{F}_{ (\fraka_{wMw^{-1}}^G)^*}  \mathcal{I}(\tau^w)$ over $(\fraka_{wMw^{-1}}^G)^*$ as a fiber bundle over $(\fraka_M^G)^*$ by the pullback by $\lambda \longmapsto \lambda^w$.
	Then we take the fiber product 
\[
\mathcal{F}_{ (\fraka_M^G)^*}  
\Bigl(\,
\prod_{w\in W(M)}\mathcal{I}(\tau^w)\,\Bigr)
\]
of 
\[
\left\{ \mathcal{F}_{ (\fraka_{wMw^{-1}}^G)^*}  \mathcal{I}(\tau^w)\, 
|\, w\in W(M)\right\}
\]
over $(\fraka_M^G)^*$.
	Let $f_\lambda$ be a standard section of $\mathcal{F}_{(\fraka_M^G)^\ast}\mathcal{I}(\tau)$.
	Then $\lambda \longmapsto M(w, \tau\otimes\lambda)f_\lambda$ can be considered as a meromorphic section of $\mathcal{F}_{ (\fraka_{wMw^{-1}}^G)^*}  \mathcal{I}(\tau^w)$ defined for $\lambda\notin\frakX$.
	Thus the collection $(M(w, \tau\otimes \lambda)f_\lambda)_{w\in W(M)}$ can be considered as a meromorphic section of the fiber product bundle $\mathcal{F}_{ (\fraka_M^G)^*}  \Bigl(\,\prod_{w\in W(M)}\mathcal{I}(\tau^w)\,\Bigr)$.

	Let $\calV(M,\tau)$ be the closure of the loci of these sections, where $f_\lambda$ runs through all standard sections.
	Put
	\[ 
	\calE(M,\tau) = \calV(M,\tau) \cap \bigoplus_{\begin{smallmatrix}{P'=M'N'}\\{P'\sim P}\end{smallmatrix}} \left(\bigoplus_{w\in W(M,M')} \left(\mathcal{I}_{P'}^{G}(\tau^w) \right) \right).
	\]
	Here, 
	\[
	\bigoplus_{\begin{smallmatrix}{P'=M'N'}\\{P'\sim P}\end{smallmatrix}}  \left(\bigoplus_{w\in W(M,M')} \left(\mathcal{I}_{P'}^{G}(\tau^w) \right) \right)
	\] 
	is considered as the fiber of the fiber bundle $\mathcal{F}_{ (\fraka_M^G)^*}  \Bigl(\,\prod_{w\in W(M)}\mathcal{I}(\tau^w)\,\Bigr)$ at $\lambda=0$.
	If $0 \not \in \frakX$, we have the isomorphism $\mathcal{I}_P^G(\tau) \cong \calE(M,\tau)$, since in this case $M(w, \tau)$ is an isomorphism for any $w\in W(M)$.
	
	Suppose $(f_w)_w \in \calE(M,\tau)$.
	Then, there exists a sequence 
\[
(f_{w,i})_w \in \calE(M,\tau\otimes\lambda_i^w),
\qquad i=1,2, \ldots
\] with $\lambda_i \in (\fraka_M^G)^*\setminus\frakX$ such that $\lim_{i \rightarrow \infty} f_{w,i} = f_w$ for all $w$.
	For such a sequence $(f_{w,i})_w$, the limit
	\[
	\lim_{i \rightarrow \infty} E(g ,\lambda_i,f_{1,i})
	\]
	exists, since the limit of any constant term exists.
	This limit does not depend on the choice of the sequence $(f_{w,i})_w $ and will be denoted by $E(g, (f_w)_w)$.
	Let $\calE_0(M,\tau)$ be a $\bbC$-vector space which is spanned by Eisenstein series $E(g, (f_w)_w)$ for $(f_w)_w \in \calE(M,\tau)$.
	Then we have $\calE_0(M,\tau)\subset \mathcal{A}(A \bs G)_{(M, \tau)}$.
Note that if $\tau$ is regular, then the map $\calE(M,\tau) \longrightarrow \calE_0(M, \tau)$ is an isomorphism.
	We then state the conjecture as follows:
	
	
	\begin{conj}\label{conj}
\begin{itemize}
\item[(1)] $\calN(A \bs G)_{(M, \tau)}\subset \calE_0(M, \tau)$.
\item[(2)] The action of $\mathcal{Z}$ on $\calN(A \bs G)$ is semisimple.
\item[(3)] If $\calN(A \bs G)_{(M, \tau)}\neq 0$, then the infinitesimal character of $\tau$ is integral.
\end{itemize}
	\end{conj}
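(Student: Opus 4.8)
The plan is to prove Conjecture \ref{conj} only in the case $G = \Res_{F/\bbQ}\SL_2$ (this is Corollary \ref{cor_main_1}), reading off all three assertions from Theorem \ref{main} and Corollary \ref{str_th_inf} by running through the cuspidal data. Since $G$ is semisimple we have $A_G^\infty = \{1\}$ and $\calN(A\bs G) = \calN(G)$, and up to conjugacy the only Levi subgroups are $G$ itself and the diagonal torus $M_0 = \Res_{F/\bbQ}M$. For $M = G$ a cuspidal datum $(G,\tau)$ contributes the full cuspidal $\tau$-isotypic space, which is exactly $\calE_0(G,\tau)$, so (1) is trivial there, and (3) amounts to saying that if $\tau$ carries a nearly holomorphic cusp form then $\tau$ has integral infinitesimal character, which is immediate from Corollary \ref{str_th_inf} (and its $F=\bbQ$ analogue from \cite{pss1}): such a form generates some $L(\lambda)$ with $\lambda \in \bbZ^d_{\geq 1}$. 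So the real content is the torus case $M = M_0$, where $\tau$ is a Hecke character $\mu$ of $F^\times F_{\infty,+}^\times \bs \bbA_F^\times$.

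For assertion (2) I would first handle $F \neq \bbQ$: by Corollary \ref{str_th_inf} every $(\frakg,K_\infty)$-submodule of $\calN(\Gamma)$ is a direct sum of irreducible highest weight modules, hence $\calZ$-semisimple, and transporting this to the adelic space by (\ref{corr_MF_AF}) together with $\calN(G) = \calS\calN(G) \oplus \calE\calN(G) = \calS\calN(G)\oplus\bigoplus_\chi \calE\calN(G,\chi)$ gives semisimplicity of $\calZ$ on $\calN(G)$. For $F = \bbQ$ the extra input is the Pitale--Saha--Schmidt fact that the $(\frakg,K_\infty)$-module generated by a nearly holomorphic form is $\calZ$-semisimple; in particular, although the module $\pi$ of Theorem \ref{main}(4) is indecomposable and non-split, its composition factors (the trivial module and $L(2)$, tensored with the finite part) all have the single infinitesimal character $\chi_{\ul 2}$, so $\calZ$ still acts on $\pi$ by the scalar $\chi_{\ul 2}$. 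This yields (2), and then the decomposition $\calE\calN(G)=\bigoplus_\chi\calE\calN(G,\chi)$ used elsewhere. For the torus part of (3): a nonzero contribution to $\calN(G)_{(M_0,\mu)}$ lies in $\calE\calN(G)$, and by Proposition \ref{const_term_NHAF} combined with Lemma \ref{local}(1) the $\frakp_-$-finiteness constraint at the archimedean places forces $\mu$ (up to an unramified twist) to lie in $\mathfrak{X}_{1}$ with $s\in -1+2\bbZ_{\geq0}$ or in $\mathfrak{X}_{-1}$ with $s\in 2\bbZ_{\geq0}$; in either case the associated infinitesimal character is $\chi_{\ul k}$ for an integer $k$, hence integral.

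For assertion (1) with $M=M_0$, I would show every $\varphi\in\calN(G)_{(M_0,\mu)}$ lies in the span of Eisenstein series attached to $(M_0,\mu)$, and then match that span with $\calE_0(M_0,\mu)$. Concretely: for $k\geq 3$ every element of $\calE\calN(G,\chi_{\ul k})$ is the value $E(\,\cdot\,,k-1,\Phi)$ of an Eisenstein series (Lemmas \ref{k>3_odd} and \ref{k>3_even}); for $k=2$ it is such a value or a residue at $s=1$ (Lemma \ref{k=2}), the residue producing exactly the constant and the module $\pi$ when $F=\bbQ$; for $k=1$ it is a value at $s=0$, or, in the theta directions, a realization $R(V)\to\calA(G)$ given by an Eisenstein series (Lemma \ref{realization}, Theorem \ref{KR}), incoherent families being excluded by Theorem \ref{KR}(1). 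In each case the resulting function is, by construction, a limit $\lim_i E(g,\lambda_i,f_{1,i})$ of honest Eisenstein series attached to standard sections $f_i$ of $\calF_{(\fraka_{M_0}^G)^*}\calI(\mu)$ with $\lambda_i$ tending to the relevant point, hence lies in $\calE_0(M_0,\mu)$; this gives $\calN(G)_{(M_0,\mu)}\subset\calE_0(M_0,\mu)$.

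The main obstacle is this last matching step: one must reconcile the abstract space $\calE_0(M,\tau)$ — defined via closures of loci of meromorphic sections and their constant terms — with the concrete values, residues and theta integrals produced in the proof of Theorem \ref{main}. In particular one has to verify that the residual contributions (the $E_2$-term for $F=\bbQ$) and the constant function are genuinely obtained as limits along a path in $(\fraka_{M_0}^G)^*$ avoiding the singular hyperplanes $\frakX_w$, and that the theta lifts $R(V)$ for $k=1$ sit inside $\calE_0$; both follow from Langlands' theory of constant terms but require careful bookkeeping of $\frakX_w$ and of the normalized intertwining operators of Proposition \ref{normalization}. Everything else is a routine case check feeding on Theorem \ref{main} and Corollary \ref{str_th_inf}.
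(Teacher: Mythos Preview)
Your proposal is correct and follows the same route as the paper: Corollary \ref{cor_main_1} is stated there with no argument beyond ``follows immediately from the proof of Theorem \ref{main}'', and what you have written is precisely the unpacking of that sentence --- splitting by Levi, invoking Corollary \ref{str_th_inf} (and \cite{pss1} for $F=\bbQ$) for part (2), Lemma \ref{int_par}/Proposition \ref{const_term_NHAF}/Lemma \ref{local} for part (3), and the explicit Eisenstein realizations of Lemmas \ref{k>3_odd}--\ref{k=2} and \ref{realization} for part (1). One small correction: for $k=2$ and $F=\bbQ$ the form $E_{2,\bbA}$ arises as the \emph{value} at $s=1$ of the weight-$2$ Eisenstein series (Lemma \ref{k=2} shows it is holomorphic there), not as a residue; the residue of the spherical Eisenstein series is what gives the constant function.
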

Note that (1) implies (2), since the action of $\mathcal{Z}$ on $\calE_0(M, \tau)$ is semisimple.
	\begin{rem}
	Let $\lambda \longmapsto f_\lambda$ be a holomorphic section of $I_P^G(\tau\otimes \lambda)$.
	Then there exists a polynomial $Q(\lambda)$ of $\lambda\in (\fraka_M^G)^\ast$ such that $Q(\lambda)E(g, \lambda, f_\lambda)$ is holomorphic at $\lambda=0$.
	In \cite{FS}, it is proved that the space $\mathcal{A}(G(\bbQ)A_G^\infty\backslash G(\bbA))_{(M, \tau)}$ is generated by the derivatives of $Q(\lambda)E(g, \lambda, f_\lambda)$ as $f_\lambda$ runs through all holomorphic sections.
	The space $\calE_0(M, \tau)$ can be considered as the space of the leading terms of $Q(\lambda)E(g, \lambda, f_\lambda)$.
	\end{rem}
	

\end{document}